\numberwithin{equation}{section} 
\newcommand{\norm}[1]{\| #1 \|}
\newcommand{\Norm}[1]{ \left\| #1 \right\| }
\newcommand{\Fourier}{\mathscr{F}}
\newcommand{\Eps}{\mathcal{E}}
\newcommand{\leer}{\emptyset}
\newcommand{\Punkt}{\thinspace \text{{\raisebox{-1.0pt}{\Large $\cdot$}}} \thinspace} 
\newcommand{\Supp}{\mathrm{spt}}
\newcommand{\Propa}{\text{property}\thinspace (\alpha)}
\newcommand{\Schwartz}{\mathscr{S}(\mathbb{R}^n)}
\newcommand{\Temp}{\mathscr{S}'(\mathbb{R}^n)}
\newcommand{\RR}{\mathcal{R}}
\newcommand{\Bild}{\mathscr{R}}
\newcommand{\Kern}{\mathscr{N}}
\newcommand{\Def}{\mathscr{D}}
\newcommand{\grad}{\mathscr{G}} 
\newcommand{\gradd}{\mathscr{G}^*} 
\newcommand{\A}{\mathscr{A}}
\newcommand{\HT}{\mathcal{HT}}
\newcommand{\timeD}{\frac{d}{dt}}
\newtheorem{satz}{Proposition}[section]
\newtheorem{theorem}[satz]{Theorem}
\newtheorem{lemma}[satz]{Lemma}
\newtheorem{korollar}[satz]{Corollary}
\theoremstyle{definition}
\newtheorem{definition}[satz]{Definition}
\newtheorem{bemerkung}[satz]{Remark}
\newtheorem{beispiel}[satz]{Example}
\begin{document}

\title{Triebel-Lizorkin-Lorentz spaces\\ and the Navier-Stokes equations}

\author{Pascal Hobus and J\"urgen Saal}

\date{September 26, 2017}

\maketitle

\begin{abstract}
We derive basic properties of Triebel-Lizorkin-Lorentz spaces important 
in the treatment of PDE. For instance, we prove Triebel-Lizorkin-Lorentz spaces 
to be of class $\HT$, to have property $(\alpha)$, and to admit a
multiplier result of Mikhlin type. By utilizing these properties
we prove the Laplace and the Stokes operator to admit a bounded
$H^\infty$-calculus. This is finally applied to derive
local strong well-posedness for the Navier-Stokes
equations on corresponding Triebel-Lizorkin-Lorentz ground spaces. 
\end{abstract}



\section{Introduction}

The Triebel-Lizorkin-Lorentz spaces $F^{s,r}_{p,q}$, a unification of
Triebel-Lizorkin spaces $F^s_{p,q}$ and Lorentz spaces $L_{p,r}$, were
introduced by Yang, Cheng and Peng (see
\cite{Yang-Cheng-Peng}) in 2005, where the possible parameters are $s
\in \mathbb{R}$, $1 < p,q < \infty$ and $1 \le r \le \infty$. 
Implicitly the spaces $F^{s,r}_{p,q}$ already appear in the pertinent
monograph of Triebel (see~\cite[Sec.\ 2.4.2]{Triebel1978}).
By means
of wavelet theory in \cite{Yang-Cheng-Peng} $F^{s,r}_{p,q}$ is proved to
be a real interpolation scale of Triebel-Lizorkin spaces, which is
very important and helpful, in particular for applications to PDE.
In 2011, Xiang and Yan already considered Triebel-Lizorkin-Lorentz
spaces in the context of partial differential equations
and established the local well-posedness of a quasi-geostrophic equation 
(see~\cite{Xiang-Yan}).

The scale $F^{s,r}_{p,q}$ contains many important function spaces: By
setting $r = p$, we obtain the Bessel-potential spaces $H^s_p$ for $q = 2$ as well as the Sobolev-Slobodeckij spaces $W^s_p$ for $q = p$ in the case $s \not \in \mathbb{Z}$ resp. $q = 2$ in the case $s \in \mathbb{Z}$.
In particular, we obtain the Lebesgue spaces $L_p$ by setting $s = 0$ as
well as the Lorentz spaces $L_{p,r} = F^{0,r}_{p,2}$.

It is therefore natural to ask, whether the scale of  
Triebel-Lizorkin-Lorentz spaces is suitable in the treatment of partial
differential equations, since a corresponding outcome would yield
results simultaneously in all spaces listed above.

The purpose of 
this paper is twofold. First we establish further fundamental properties
of Triebel-Lizorkin-Lorentz spaces, such as of class $\HT$, property
$(\alpha)$, useful equivalent norms, a Mikhlin type multiplier result,
etc.\ (see Section~\ref{secproptll}). 
Second, we apply these properties in order to prove a bounded
$H^\infty$-calculus for the Laplace and the Stokes operator 
(Section~\ref{secLap} and \ref{sec: Helmholtz Projection and Stokes Operator})
which, in turn, will then be utilized to construct a maximal strong solution
$(u,\nabla p)$ of the Navier-Stokes equations
\begin{equation*}
\mathrm{(NSE)}_{f,u_0}
\begin{cases}
\timeD u - \Delta u + \nabla p + (u \cdot \nabla)u &= ~f \quad ~\text{in } (0,T) \times \mathbb{R}^n, \\
\qquad \qquad \qquad \qquad \quad~ \mathrm{div} \thinspace u &= ~0 \quad ~\text{in } (0,T) \times \mathbb{R}^n, \\
 \qquad \qquad \qquad \qquad \quad~~ u(0) &= ~u_0 \quad \text{in } \mathbb{R}^n
\end{cases}
\end{equation*}
in these spaces (Section~\ref{sec: nse}).
In fact, we prove the following result.

\begin{theorem} \label{thm: Hauptsatz0}
Let $n \in \mathbb{N}$, $n \ge 2$, $s > -1$ and let $1 < p,q,r < \infty$
and $1 < \eta < \infty$ such that $\frac{n}{2p} + \frac{1}{\eta} < 1$.
Then for every
$f \in L_\eta((0,\infty),(F^{s,r}_{p,q})^n)$ and initial value 
$u_0 \in\bigl(F^{s,r}_{p,q},\, F^{s+2,r}_{p,q}\bigr)^n_{1-1/\eta,\eta}$ with
vanishing divergence, there is a maximal time $T^*>0$ such that 
the Navier-Stokes equations $\mathrm{(NSE)}_{f,u_0}$
have a unique maximal strong solution $(u,\nabla p)$ on $(0,T^*)$ satisfying
\begin{equation*}
\begin{split}
u &\in H^1_\eta \big( (0,T),(F^{s,r}_{p,q})^n \big) \cap L_\eta \big( (0,T), (F^{s+2,r}_{p,q})^n \big), \\
\nabla p &\in L_\eta((0,T),(F^{s,r}_{p,q})^n)
\end{split}
\end{equation*}
for every $T\in (0,T^*)$.
If additionally $\frac{n}{2p} + \frac{2}{\eta} < 1$, then $u$ is either a global solution or we have
\begin{equation} \label{eq: Blowup}
T^* < \infty \quad \text{and} \quad
\underset{t \nearrow T^*}{\mathrm{limsup}} \thinspace
\| u(t) \|_{\bigl(F^{s,r}_{p,q},\, F^{s+2,r}_{p,q}\bigr)^n_{1-1/\eta,\eta}} 
= \infty.
\end{equation}
\end{theorem}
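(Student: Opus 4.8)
The plan is to recast $\mathrm{(NSE)}_{f,u_0}$ as an abstract semilinear Cauchy problem on the solenoidal subspace of $(F^{s,r}_{p,q})^n$, solve it locally by maximal regularity together with a contraction argument, and obtain the maximal solution and the blow-up alternative by the usual continuation technique. \textbf{Reduction.} Let $\mathbb{P}$ be the Helmholtz projection; its symbol is homogeneous of degree $0$ and smooth off the origin, so by the Mikhlin type multiplier result of Section~\ref{secproptll} it is a bounded projection on $(F^{s,r}_{p,q})^n$ and on $(F^{s+2,r}_{p,q})^n$. Put $X_0 := \mathbb{P}(F^{s,r}_{p,q})^n$, let $A := -\mathbb{P}\Delta$ be the Stokes operator with $D(A) =: X_1 = \mathbb{P}(F^{s+2,r}_{p,q})^n$, and apply $\mathbb{P}$ and $I-\mathbb{P}$ to $\mathrm{(NSE)}_{f,u_0}$; this turns it into
\[
\timeD u + A u \;=\; -\,\mathbb{P}(u\cdot\nabla)u + \mathbb{P}f, \qquad u(0)=u_0,
\]
with the pressure recovered afterwards via $\nabla p = (I-\mathbb{P})\bigl(\Delta u - (u\cdot\nabla)u + f\bigr)$. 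Since the scale is of class $\HT$ and, by Section~\ref{sec: Helmholtz Projection and Stokes Operator} (established there via property $(\alpha)$ and the multiplier result), $A$ has a bounded $H^\infty$-calculus of angle $<\pi/2$, the operator $A$ has maximal $L_\eta$-regularity: for every $T>0$ the linear solution operator is an isomorphism from $L_\eta((0,T),X_0)\times\gamma_{tr}$ onto $\mathbb{E}_1(T) := H^1_\eta((0,T),X_0)\cap L_\eta((0,T),X_1)$, where $\gamma_{tr} := (X_0,X_1)_{1-1/\eta,\eta}$. Because $\mathbb{P}$ is a bounded projection, the retraction--coretraction principle identifies $\gamma_{tr}$ with the solenoidal part of $\bigl(F^{s,r}_{p,q},F^{s+2,r}_{p,q}\bigr)^n_{1-1/\eta,\eta}$, i.e.\ exactly the space of admissible initial data; moreover $\mathbb{E}_1(T)\hookrightarrow \mathrm{BUC}([0,T],\gamma_{tr})$.

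The crux is the bilinear estimate: there are $\beta>0$, $C>0$ with
\[
\bigl\| \mathbb{P}(u\cdot\nabla)v \bigr\|_{L_\eta((0,T),X_0)} \;\le\; C\,T^{\beta}\,\|u\|_{\mathbb{E}_1(T)}\,\|v\|_{\mathbb{E}_1(T)} \qquad (u,v\in\mathbb{E}_1(T),\ T\le 1).
\]
Using $\mathrm{div}\,u=0$ one has $(u\cdot\nabla)v=\mathrm{div}(u\otimes v)$, and since $\mathbb{P}$ is bounded and $\mathrm{div}\colon F^{s+1,r}_{p,q}\to F^{s,r}_{p,q}$ is bounded it suffices to bound $\|u\otimes v\|_{L_\eta((0,T),F^{s+1,r}_{p,q})}$. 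Here one distributes regularity between the two factors via the mixed derivative theorem $\mathbb{E}_1(T)\hookrightarrow H^{1-\theta}_\eta\bigl((0,T),[X_0,X_1]_\theta\bigr)$ — available because $A$ has bounded imaginary powers — the embedding $\mathbb{E}_1(T)\hookrightarrow\mathrm{BUC}([0,T],\gamma_{tr})$, the Sobolev embeddings and the paraproduct/pointwise multiplication estimates for the Triebel-Lizorkin-Lorentz scale; Hölder's inequality in time then produces the factor $T^{\beta}$, and $\tfrac{n}{2p}+\tfrac1\eta<1$ is precisely what makes $\beta>0$ in this bookkeeping.

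Now fix $f$ and $u_0$, let $u_*\in\mathbb{E}_1(T)$ solve the linear problem with data $(\mathbb{P}f,u_0)$, and let $S$ be the linear solution operator with zero initial value, so that $u$ solves the semilinear equation iff $u=\Phi(u):=u_*-S\bigl(\mathbb{P}(u\cdot\nabla)u\bigr)$. Since $\|u_*\|_{\mathbb{E}_1(T)}\le C\bigl(\|u_0\|_{\gamma_{tr}}+\|f\|_{L_\eta((0,\infty),(F^{s,r}_{p,q})^n)}\bigr)$ uniformly for $T\le1$, the bilinear estimate shows that for $T$ small — depending only on $\|u_0\|_{\gamma_{tr}}$ and $\|f\|_{L_\eta((0,\infty),(F^{s,r}_{p,q})^n)}$ — the map $\Phi$ is a self-map and a strict contraction on a ball of $\mathbb{E}_1(T)$, yielding a unique local solution; uniqueness in all of $\mathbb{E}_1(T)$ follows since the closed, open, nonempty set where two solutions coincide is all of $[0,T]$. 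Restarting the construction at times tending to the supremum of existence gives a solution on a maximal interval $(0,T^*)$ with $u\in\mathbb{E}_1(T)$ for every $T<T^*$, and $\nabla p\in L_\eta((0,T),(F^{s,r}_{p,q})^n)$ then follows from the pressure formula (the term $(u\cdot\nabla)u$ being handled by the bilinear estimate), while uniqueness of the maximal solution is immediate. For the blow-up alternative, assume in addition $\tfrac{n}{2p}+\tfrac2\eta<1$; then the bilinear estimate can be arranged so that the nonlinearity is subcritical with respect to $\gamma_{tr}$, whence the length of the interval of existence of the solution restarted at a time $t_0$ is bounded below by a quantity depending only on $\|u(t_0)\|_{\gamma_{tr}}$ and $f$. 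If $T^*<\infty$ and $\limsup_{t\nearrow T^*}\|u(t)\|_{\gamma_{tr}}<\infty$, restarting at some $t_0$ close enough to $T^*$ would extend the solution beyond $T^*$ and, after gluing, contradict maximality; hence either $T^*=\infty$ or \eqref{eq: Blowup} holds.

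The main difficulty lies in the bilinear estimate: since $s>-1$ may be small or negative, the product on $F^{s+1,r}_{p,q}$ cannot be treated by a naive multiplication-algebra argument but must be handled through paraproducts together with anisotropic Sobolev embeddings inside the Lorentz-type scale, and time smoothness has to be traded against spatial smoothness via the mixed derivative theorem carefully enough that the surviving power of $T$ is strictly positive exactly under $\tfrac{n}{2p}+\tfrac1\eta<1$ — respectively, that subcriticality relative to $\gamma_{tr}$ holds exactly under $\tfrac{n}{2p}+\tfrac2\eta<1$ for the blow-up criterion.
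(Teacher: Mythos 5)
Your proposal follows the same overall architecture as the paper: project onto solenoidal fields, recover the pressure as $(I-P)(\Delta u-(u\cdot\nabla)u+f)$, invoke maximal $L_\eta$-regularity for the Stokes operator (via $H^\infty$-calculus built from the $\HT$/property~$(\alpha)$/multiplier machinery), handle the nonlinearity through the mixed-derivative theorem combined with the pointwise multiplication result $F^{s+1,r}_{2p,q}\cdot F^{s+1,r}_{2p,q}\to F^{s+1,r}_{p,q}$, and then run local existence, continuation and blow-up. The use of a contraction mapping around $u_*$ rather than the paper's inverse function theorem applied to $N(\bar u)=L\bar u-\binom{G(\bar u+u^*)}{0}$ is a legitimate but minor variation. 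However, there is a genuine gap in the quantitative part of your argument.

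You claim a bilinear estimate
\[
\bigl\|P(u\cdot\nabla)v\bigr\|_{L_\eta((0,T),X_0)}\;\le\;C\,T^\beta\,\|u\|_{\mathbb{E}_1(T)}\,\|v\|_{\mathbb{E}_1(T)}\qquad (T\le 1)
\]
with $\beta>0$ and $C$ independent of $T$, valid for all $u,v\in\mathbb{E}_1(T)$, already under the weaker condition $\tfrac{n}{2p}+\tfrac1\eta<1$, and then use it to conclude that the local existence time depends only on $\|u_0\|_{\gamma_{tr}}$ and $\|f\|_{L_\eta}$. This is not available. To obtain a $T^\beta$ gain one needs an embedding $\mathbb{E}_1(T)\hookrightarrow L_\sigma((0,T),(F^{s+1,r}_{2p,q})^n)$ for some $\sigma>2\eta$ \emph{with constant uniform in $T$}, so that H\"older in time produces $T^{\frac{1}{2\eta}-\frac{1}{\sigma}}$. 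Such a $T$-uniform embedding is obtained by extension to $\mathbb{R}$ followed by the standard Sobolev embedding; but the extension operators for the full space $\mathbb{E}_1(T)$ (nonzero trace) have norms that degenerate as $T\to 0$. As the paper is careful to record (Lemma~\ref{thm: Mixed Derivatives} eq.~\eqref{eq: MDT mit Zeitspur 0} and Lemma~\ref{thm: Soboleveinbettung} eq.~\eqref{eq: Einbettung Bessel-potential mit Zeitspur 0}), the $T$-uniform constants are only guaranteed on the zero-trace space $_0\mathbb{E}_T$, via the reflection extension $E_{\infty,1}E_T$. Since $u_*$ does not have zero trace, your contraction ball cannot be shrunk to a self-map solely on the strength of the norms of the data. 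The paper resolves this by writing $u=u^*+\bar u$ with $\bar u\in\;_0\mathbb{E}_T$, so that the bilinear estimate with $T$-uniform constants is used only for the zero-trace factor, while the $u^*$-factor is made small by choosing $T_0$ so that $\|u^*\|_{L_{2\eta}((0,T_0),(F^{s+1,r}_{2p,q})^n)}$ is small; this uses dominated convergence and hence the resulting existence time depends on $u^*$ itself (equivalently on $(f,u_0)$) and \emph{not} merely on their norms. This is exactly why the paper needs the stronger hypothesis $\tfrac{n}{2p}+\tfrac2\eta<1$ for the blow-up alternative: only under that condition does one get the embedding $\mathbb{I}\subset(F^{s+1,r}_{2p,q})^n$ (Lemma~\ref{thm: Einbettung Raum der Anfangswerte}), whence $\|u\|_{L_{2\eta}((0,T),(F^{s+1,r}_{2p,q})^n)}\le T^{1/(2\eta)}\|u\|_{BC([0,T^*),\mathbb{I})}$ and the restarting argument becomes quantitative. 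Your proposal correctly invokes the stronger condition for the blow-up alternative but is inconsistent in having already claimed the norm-dependent time bound under the weaker one; this should be removed, and the local-existence step should go through the zero-trace reduction.
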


\begin{bemerkung}
The constraints on the parameters $p,\eta$, especially the more 
restrictive one for the additional property that $u$ is either a global solution or \eqref{eq: Blowup} holds, rely on the use of 
the multiplication result for $F^{s,r}_{p,q}$-spaces given 
in Lemma~\ref{thm: Produkt}. They might be improved
to the standard contraints in classical function spaces such as $L^p$. 
This, however, requires optimal results on multiplication for 
$F^{s,r}_{p,q}$-spaces which by now are not available and would
go beyond the scope of this note.
\end{bemerkung}

In order to prove Theorem~\ref{thm: Hauptsatz0} we consider the 
usual operatorial formulation relying on the use of Helmholtz projection and
Stokes operator (see Theorem~\ref{thm: Hauptsatz}). 
Existence of the Helmholtz decomposition and 
maximal regularity for the Stokes operator in Triebel-Lizorkin-Lorentz spaces 
are proved in Section~\ref{sec: Helmholtz Projection and Stokes
Operator}. 
The concept of maximal regularity is introduced in the next section.
Starting from the pioneering works of Leray, Hopf, Fujita, Kato, 
Solonnikov, Giga, etc., 
local well-posedness for the Navier-Stokes equations 
in classical functions spaces
has been proved by a vast number of authors. We refrain from
trying to give a complete list here. Instead we refer to the
well-known monographs \cite{Sohr,Galdi} and the literature cited therein. 
For a comprehensive survey of results on the 
associated linear Stokes operator we also refer to \cite{hisa2016}.
As mentioned above, our approach has the advantage that it unifies
many of the existing results on local well-posedness by the fact
that Triebel-Lizorkin-Lorentz spaces include quite a number of
classical function spaces. The approach to the Navier-Stokes equations 
given here is meant as a first step towards a theory 
in Triebel-Lizorkin-Lorentz spaces.
Further investigations such as for instance well-posedness in critical
Triebel-Lizorkin-Lorentz spaces are left as a future challenge.

This article is organized as follows. In Section~\ref{sec: Basic}
we fix notation and recall briefly basic notions and tools related
to maximal regularity. In Section~\ref{secproptll} we establish
further properties of Triebel-Lizorkin-Lorentz spaces relevant
for the handling of PDE. In Section~\ref{secLap} we prove 
a bounded $H^\infty$-calculus for the Laplacian and in 
Section~\ref{sec: Helmholtz Projection and Stokes Operator}
existence of the Helmholtz decomposition and 
a bounded $H^\infty$-calculus for the Stokes operator on 
solenoidal subspaces.
The same property is proved to hold for the time derivative operator 
in Section~\ref{sec: time der} where we also give embedding results
important to handle nonlinearities. 
Theorem~\ref{thm: Hauptsatz0} is proved in Section~\ref{sec: nse}.
Finally, Appendix~\ref{sec: appendix} collects basic facts on
extension operators used in the sections before.

\section{Basic notation and preliminary results} \label{sec: Basic}

Generally $| \cdot |$ denotes the euklidean norm on $\mathbb{R}^n$ and the natural numbers $\mathbb{N}$ do not contain zero whereas $\mathbb{N}_0 = \mathbb{N} \cup \{ 0 \}$.
For two equivalent norms $\| \cdot \|$ and $\| \cdot \|'$ on a vector space $X$ we write $\| \cdot \| \sim \| \cdot \|'$.
Similarly, we use the notation $\| \cdot \| \lesssim \| \cdot \|'$ if there is a constant $C > 0$ such that $\| \cdot \| \le C \| \cdot \|'$.
Commonly $C$ denotes a generic positive constant. Especially during estimates we also use $C',C'',\dots$ when the constant changes.
The \emph{space of Schwartz functions} is denoted by $\Schwartz$ and thus $\Temp$ is the space of \emph{tempered distributions}. The corresponding space of $X$-valued Schwartz functions (where $X$ is a Banach space) is $\mathscr{S}(\mathbb{R}^n,X)$ and we set $\mathscr{S}'(\mathbb{R}^n,X) = \mathscr{L}(\Schwartz,X)$, that is, the space of continuous linear operators $T: \Schwartz \rightarrow X$.
The space of Distributions is $\mathscr{D}'(\mathbb{R}^n)$.
Nullspace resp.\ Range of a linear operator $T: \Def(T) \subset X \rightarrow X$ are denoted by $\Kern(T)$ resp.\ $\Bild(T)$.
The support of a function $f$ is denoted by $spt(f)$. For a Banach space $X$ and a measure space $(\Omega,\mathcal{A},\mu)$ let $\mathcal{M}(\Omega,X)$ be the space of measurable (i.e., also separable-valued) functions $f: \Omega \rightarrow X$.
The \emph{Lorentz space} $L_{p,r}(X) = L_{p,r}(\Omega,X) \subset \mathcal{M}(\Omega,X)$ with parameters $1 \le p,r \le \infty$ consists of those functions whose Lorentz quasinorm
\begin{equation*}
\vvvert f \vvvert_{L_{p,r}(\Omega,X)} =
\begin{cases}
\Big( \int_0^\infty \big[t^\frac{1}{p} f^*(t) \big]^r \frac{dt}{t} \Big)^\frac{1}{r}, & r < \infty \\
\sup_{t>0} t^\frac{1}{p} f^*(t), & r = \infty
\end{cases}
\end{equation*}
is finite, where
\begin{equation*}
f^*(t) = \inf \{ \alpha \ge 0 : d_f(\alpha) \le t \}, \quad t \ge 0
\end{equation*}
is the \emph{decreasing rearrangement} and
\begin{equation*}
d_f(\alpha) = \mu( \{ z \in \Omega : \| f(z) \| > \alpha \} ), \quad \alpha \ge 0
\end{equation*}
is the \emph{distribution function} of $f \in \mathcal{M}(\Omega,X)$. Two functions in $L_{p,r}(\Omega,X)$ are considered equal, if they are equal on a null set (with respect to $\mu$).

For $1 < p_0,p_1,p < \infty$, $p_0 \ne p_1$, $1 \le r_0,r_1,r \le \infty$ and $0 < \theta < 1$ such that $\frac{1}{p} = \frac{1 - \theta}{p_0} + \frac{\theta}{p_1}$ we have
\begin{equation*}
\big( L_{p_0,r_0}(X) , L_{p_1,r_1}(X) \big)_{\theta,r} = L_{p,r}(X)
\end{equation*}
(see \cite[Rem. 1.18.6/4]{Triebel1978})
where $(\cdot,\cdot)_{\theta,r}$ denotes the real interpolation functor. In view of the identity $L_{p,p}(X) = L_p(X)$ the Lorentz spaces are identified as real interpolation spaces of the Lebesgue spaces $L_p(X)$.
Note that $L_{p,r}(\Omega,X)$ is hence normable in the case $p > 1$. We denote the corresponding norm by $\| \cdot \|_{L_{p,r}(X)}$.

The space $l^s_q(X)$ (for $s \in \mathbb{R}$ and $1 \le q < \infty$) consists of the sequences $a = (a_k)_{k \in \mathbb{N}_0} \subset X$ in a Banach space $X$ that fulfill
\begin{equation*}
\| a \|_{l^s_q(X)} = \bigg(\sum_{k \in \mathbb{N}_0} \big[ 2^{-ks} \| a_k \| \big]^q \bigg)^\frac{1}{q} < \infty.
\end{equation*}
In the case $X = \mathbb{C}$ we write $l^s_q$ instead of $l^s_q(\mathbb{C})$.

Real resp.\ complex interpolation of the Sobolev spaces
\begin{equation*}
W^k_p(\mathbb{R}^n,X) = \big\{ u \in L_p(\mathbb{R}^n,X) : \partial^\alpha u \in L_p(\mathbb{R}^n,X) ~\forall \alpha \in \mathbb{N}_0^n, |\alpha| \le k \big\}
\end{equation*}
leads to \emph{Sobolev-Slobodeckij spaces}
\begin{equation*}
W^s_p(\mathbb{R}^n,X) = \big( L_p(\mathbb{R}^n,X),W^k_p(\mathbb{R}^n,X) \big)_{\frac{s}{k},p}
\end{equation*}
resp.\ \emph{Bessel-potential spaces}
\begin{equation*}
H^s_p(\mathbb{R}^n,X) = \big[ L_p(\mathbb{R}^n,X),W^k_p(\mathbb{R}^n,X) \big]_{\frac{s}{k}},
\end{equation*}
where $k \in \mathbb{N}$, $1 < p < \infty$, $0 < s < k$ and $X$ is a complex Banach space. In the following we assume that $X$ is of class $\HT$ (we give one possible definition for spaces of class $\HT$ below). In the case $s = m \in \mathbb{N}_0$ the Bessel-potential spaces are given by the Sobolev spaces, so $H^m_p(\mathbb{R}^n,X) = W^m_p(\mathbb{R}^n,X)$.
For $s \in \mathbb{R}$ and $1 < p < \infty$ we will also use the representation
\begin{equation*}
H^s_p(\mathbb{R}^n,X) = \big\{ u \in \mathscr{S}'(\mathbb{R}^n,X) : \Fourier^{-1} (1 + |\xi|^2)^\frac{s}{2} \Fourier u \in L_p(\mathbb{R}^n,X) \big\},
\end{equation*}
where $\| \Fourier^{-1} (1 + |\xi|^2)^\frac{s}{2} \Fourier \cdot \|_{L_p(\mathbb{R}^n,X)}$ is an equivalent norm in $H^s_p(\mathbb{R}^n,X)$ and $\Fourier$ denotes the Fourier transform.
Moreover, the continuous embeddings
\begin{equation*}
H^s_p(\mathbb{R}^n,X)
\subset W^{s-\epsilon}_p(\mathbb{R}^n,X)
\subset H^{s-2 \epsilon}_p(\mathbb{R}^n,X)
\end{equation*}
hold for any $\epsilon > 0$.
We refer to~\cite{Hytoenen} and~\cite{Amann97} for a detailed treatise of Bessel-potential and Sobolev-Slobodeckij spaces.
In general, if $\mathcal{F}(\mathbb{R}^n,X)$ is any normed function space (e.g. $\mathcal{F} = H^s_p$ or $\mathcal{F} = W^s_p$) and $\Omega \subset \mathbb{R}^n$ is any domain, we denote by $\mathcal{F}(\Omega,X)$ the space of restrictions of functions $u \in \mathcal{F}(\mathbb{R}^n,X)$ to $\Omega$, equipped with the norm $\| u \|_{\mathcal{F}(\Omega,X)} = \inf \{ \| v \|_{\mathcal{F}(\mathbb{R}^n,X)} : v \in \mathcal{F}(\mathbb{R}^n,X), v|_\Omega = u \}$.

In order to deal with operator-valued Fourier multipliers we employ the following concept.
Let $X,Y$ be complex Banach spaces. Let $\mathcal{E}_P$ denote the set of families of random variables $(\epsilon_i)_{i \in I}$ on a probability space $P = (\Omega,\mathcal{A},\mu)$ with values in $\{ \pm 1 \}$, which are independent and symmetrically distributed. We refer to a familiy of continuous linear operators $\mathcal{T} \subset \mathscr{L}(X,Y)$ as \emph{$\RR$-bounded} if there is a probability space $P = (\Omega,\mathcal{A},\mu)$ with $\mathcal{E}_P \ne \emptyset$, $p \in [1,\infty)$ and a constant $C > 0$ such that for all $N \in \mathbb{N}$, $(\epsilon_1,\dots,\epsilon_N) \in \mathcal{E}_P$, $T_i \in \mathcal{T}$ and $x_i \in X$ (for $1 \le i \le N$)
\begin{equation} \label{eq: R-beschraenkt}
\left\| \sum_{i=1}^N \epsilon_i T_i x_i \right\|_{L_p(\Omega,Y)}
\le C \left\| \sum_{i=1}^N \epsilon_i x_i \right\|_{L_p(\Omega,X)}.
\end{equation}
In this case we call $\RR_p(\mathcal{T}) := \inf \{ C > 0 : \eqref{eq: R-beschraenkt} \text{ holds} \}$ the \emph{$\RR$-bound} or the \emph{$\RR_p$-bound}.
Note that $\RR$-boundedness implies the boundedness of $\mathcal{T} \subset \mathscr{L}(X,Y)$. If a family $\mathcal{T} \subset \mathscr{L}(X,Y)$ is $\RR_p$-bounded for a $p \in [1,\infty)$ then it is also $\RR_q$-bounded for any $q \in (1,\infty)$. In this case \eqref{eq: R-beschraenkt} also holds for a (possibly different) constant $C > 0$ if we replace $P$ by an arbitrary probability space $Q$ with $\mathcal{E}_Q \ne \emptyset$.
Also note that, in view of Lebesgue's dominated convergence theorem, it is sufficient to claim~\eqref{eq: R-beschraenkt} for $x_i$ in a dense subspace of $X$.
The following result is useful to extend boundedness to $\RR$-boundedness in some concrete cases (see \cite[Lemma 3.5]{Denk-Hieber-Pruess}).

\begin{theorem}[Kahane's contraction principle]
Let $X$ be a Banach space over $\mathbb{F} \in \{ \mathbb{R},\mathbb{C} \}$, $P = (\Omega,\mathcal{A},\mu)$ a probability space and $1 \le p < \infty$. Let $N \in \mathbb{N}$ and $a_j,b_j \in \mathbb{K}$ with $|a_j| \le |b_j|$ for $j = 1,\dots,N$. Then we have for all $x_1,\dots,x_N \in X$ and $\epsilon_1,\dots,\epsilon_N \in \Eps_P$
\begin{equation*}
\Big\| \sum_{i=1}^N a_i \epsilon_i x_i \Big\|_{L_p(\Omega,X)}
\le C_\mathbb{F} \Big\| \sum_{i=1}^N b_i \epsilon_i x_i \Big\|_{L_p(\Omega,X)},
\end{equation*}
where $C_\mathbb{R} = 1$ and $C_\mathbb{C} = 2$.
\end{theorem}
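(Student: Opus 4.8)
The plan is to reduce everything to real scalars and then to combine convexity with the symmetry of the $\epsilon_i$. \emph{Reduction.} Whenever $b_j=0$ the hypothesis $|a_j|\le|b_j|$ forces $a_j=0$, so such indices may be discarded; for the remaining indices I would set $y_j:=b_j x_j$ and $c_j:=a_j/b_j$, so that $|c_j|\le 1$, $\sum_i a_i\epsilon_i x_i=\sum_i c_i\epsilon_i y_i$ and $\sum_i b_i\epsilon_i x_i=\sum_i\epsilon_i y_i$. Thus it suffices to prove
\begin{equation*}
\Bigl\|\sum_{i=1}^N c_i\epsilon_i y_i\Bigr\|_{L_p(\Omega,X)}\le C_{\mathbb{F}}\Bigl\|\sum_{i=1}^N\epsilon_i y_i\Bigr\|_{L_p(\Omega,X)}\qquad\text{whenever }|c_i|\le 1 .
\end{equation*}

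\emph{The real case} ($\mathbb{F}=\mathbb{R}$, constant $1$). For fixed $\epsilon_i$ and $y_i$ I would consider $\Phi:\mathbb{R}^N\to[0,\infty)$, $\Phi(c):=\|\sum_i c_i\epsilon_i y_i\|_{L_p(\Omega,X)}$. As the composition of the norm of $L_p(\Omega,X)$ with a linear map, $\Phi$ is convex, hence its supremum over the cube $[-1,1]^N$ is attained at an extreme point, i.e.\ at some sign vector $\delta\in\{-1,1\}^N$ (if one prefers not to quote that a convex function on a polytope is maximized at a vertex, one iterates the one-dimensional version coordinate by coordinate). It then only remains to check that $\|\sum_i\delta_i\epsilon_i y_i\|_{L_p(\Omega,X)}=\|\sum_i\epsilon_i y_i\|_{L_p(\Omega,X)}$ for every such $\delta$. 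This is where the hypothesis $(\epsilon_i)\in\Eps_P$ enters: the $\epsilon_i$ are independent and each is symmetrically distributed with values in $\{\pm1\}$, so $(\delta_1\epsilon_1,\dots,\delta_N\epsilon_N)$ has the same joint distribution as $(\epsilon_1,\dots,\epsilon_N)$, whence the $X$-valued random variables $\sum_i\delta_i\epsilon_i y_i$ and $\sum_i\epsilon_i y_i$ are equidistributed and thus have equal $L_p(\Omega,X)$-norm. Combining gives $\Phi(c)\le\Phi(\delta)=\|\sum_i\epsilon_i y_i\|_{L_p(\Omega,X)}$ for all $c\in[-1,1]^N$, which is the claim with constant $1$.

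\emph{The complex case} ($\mathbb{F}=\mathbb{C}$, constant $2$). Here I would split $c_j=\alpha_j+i\beta_j$ with $\alpha_j,\beta_j\in\mathbb{R}$ and $|\alpha_j|,|\beta_j|\le|c_j|\le 1$, regard $X$ as a real Banach space, and estimate, using the triangle inequality in $L_p(\Omega,X)$,
\begin{equation*}
\Bigl\|\sum_i c_i\epsilon_i y_i\Bigr\|_{L_p(\Omega,X)}\le\Bigl\|\sum_i\alpha_i\epsilon_i y_i\Bigr\|_{L_p(\Omega,X)}+\Bigl\|\sum_i\beta_i\epsilon_i y_i\Bigr\|_{L_p(\Omega,X)}\le 2\Bigl\|\sum_i\epsilon_i y_i\Bigr\|_{L_p(\Omega,X)},
\end{equation*}
the last inequality being the real case applied with the real coefficients $\alpha_i$ resp.\ $\beta_i$.

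\emph{Main obstacle.} There is no genuine difficulty here; the argument is elementary. The one point that demands care is the equidistribution step: one must verify explicitly that flipping signs on an arbitrary subset of the $\epsilon_i$ leaves their joint law invariant, which uses both their independence and the symmetry of each individual $\epsilon_i$ built into the definition of $\Eps_P$. The remaining ingredients — convexity of $\Phi$, the passage to an extreme point of the cube (or the coordinatewise reduction), and the real/imaginary splitting — are routine.
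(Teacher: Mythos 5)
Your proof is correct and complete. The paper itself gives no proof of this statement (it simply cites Lemma 3.5 of Denk--Hieber--Pr\"uss), and the argument you give --- normalizing to $|c_i|\le 1$, reducing via convexity to the vertices of $[-1,1]^N$, using independence and symmetry of the $\epsilon_i$ to see that flipping signs on a subset leaves the joint law (hence the $L_p(\Omega,X)$-norm of the sum) unchanged, and then splitting into real and imaginary parts to get the factor $2$ in the complex case --- is the standard proof and matches the one in that reference.
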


We call a linear and densely defined operator $A: \Def(A) \subset X
\rightarrow X$ \emph{pseudo-sectorial} if its spectrum $\sigma(A)$ is
contained in a closed sector $\overline{\Sigma}_\varphi$ with angle
$\varphi \in (0,\pi)$, where $\Sigma_\varphi = \{ z \in \mathbb{C}
\setminus \{ 0 \} : |\arg(z)| < \varphi \}$, and the family $\{ \lambda
(\lambda + A)^{-1} : \lambda \in \Sigma_{\pi - \varphi} \} \subset
\mathscr{L}(X)$ is bounded. If $\{ \lambda (\lambda + A)^{-1} : \lambda
\in \Sigma_{\pi - \varphi} \} \subset \mathscr{L}(X)$ is even
$\RR$-bounded, $A$ is called \emph{pseudo-$\RR$-sectorial}. We omit the prefix ''pseudo-'' if the range $\Bild(A) \subset X$ is dense and so we get a \emph{sectorial} resp.\ \emph{$\RR$-sectorial} operator.
We denote the infimum over all $\varphi \in (0,\pi)$, such that the family $\{ \lambda (\lambda + A)^{-1} : \lambda \in \Sigma_{\pi - \varphi} \} \subset \mathscr{L}(X)$ is bounded, by $\varphi_A$ (\emph{spectral angle}) if $A$ is a (pseudo-)sectorial operator and likewise $\varphi^\RR_A$ is the infimum over all $\varphi \in (0,\pi)$ such that this family is $\RR$-bounded if $A$ is a (pseudo-)$\RR$-sectorial operator.

For a pseudo-sectorial operator $A$ and a fixed angle $\varphi < \varphi_A$ we will make use of the Dunford calculus
\begin{equation*}
f \longmapsto f(A),
\end{equation*}
which maps a function $f \in \mathscr{H}_0(\Sigma_\varphi) = \bigcup_{\alpha,\beta < 0} \mathscr{H}_{\alpha,\beta}(\Sigma_\varphi)$ to a bounded operator on $X$,
as well as of its extension to $\mathscr{H}_p(\Sigma_\varphi) = \bigcup_{\alpha \in \mathbb{R}} \mathscr{H}_{\alpha,\alpha}(\Sigma_\varphi)$, when $A$ is sectorial.
Here $\mathscr{H}_{\alpha,\beta}(\Sigma_\varphi)$ is the space of holomorphic functions $f: \Sigma_\varphi \rightarrow \mathbb{C}$ such that
\begin{equation*}
\| f \|_{\varphi,\alpha,\beta}
= \sup_{|z| \le 1} |z^\alpha f(z)| + \sup_{|z| > 1} |z^{-\beta} f(z)|
\end{equation*}
is finite.
We refer to~\cite{Denk-Hieber-Pruess} for a precise definition and treatise of this functional calculus. Note that for a function $f \in \mathscr{H}_{\alpha,\alpha}(\Sigma_\varphi)$ we get a bounded operator $f(A)$ in the case $\alpha < 0$ but in general only a closed operator on the domain
$\Def(f(A)) = \{ x \in X : (g^k f)(A)x \in \Def(A^k) \cap \Bild(A^k) \}$.
Here $k > \alpha$ is a nonnegative integer and $g \in \mathscr{H}_0(\Sigma_\varphi)$ is the function $g(z) := \frac{z}{(1 + z)^2}$, which leads to a bijective mapping
$g(A)^{-k}: \Def(A^k) \cap \Bild(A^k) \longrightarrow X$.
A slight modification of this functional calculus leads to the following well-known characterization of sectorial operators
(see~\cite[Prop. 3.4.4]{Haase}).

\begin{satz}
An operator
$A: \Def(A) \subset X \rightarrow X$ is pseudo-sectorial with angle $\varphi_A < \frac{\pi}{2}$ if and only if $-A$ is the generator of a bounded holomorphic strongly continuous semigroup.
\end{satz}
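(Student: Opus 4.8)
The plan is to establish both implications of the equivalence, treating this as essentially the vector-valued analogue of the classical Hille--Yosida-type characterization of analytic semigroup generators, but phrased on the level of the resolvent sector rather than generator estimates.

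For the forward direction, suppose $A$ is pseudo-sectorial with $\varphi_A < \pi/2$. Then I would pick an angle $\varphi$ with $\varphi_A < \varphi < \pi/2$, so that $\sigma(A) \subset \overline{\Sigma}_\varphi$ and $\{\lambda(\lambda+A)^{-1} : \lambda \in \Sigma_{\pi-\varphi}\}$ is bounded; note that $\Sigma_{\pi-\varphi}$ contains a sector around the negative real axis of half-angle strictly larger than $\pi/2$, hence in particular a right half-plane-like region. The standard recipe is to define the semigroup by the Dunford/Cauchy integral
\begin{equation*}
T(t) := \frac{1}{2\pi i} \int_{\Gamma} e^{t z} (z - (-A))^{-1} \, dz
= \frac{1}{2\pi i} \int_{\Gamma} e^{t z} (z + A)^{-1} \, dz,
\end{equation*}
where $\Gamma$ is a suitable contour in the left half-plane opening into $-\Sigma_{\pi-\varphi}$ (for instance two rays $r e^{\pm i\theta}$ with $\pi/2 < \theta < \pi - \varphi$). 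The boundedness of $\lambda(\lambda+A)^{-1}$ on the sector gives the resolvent bound $\|(z+A)^{-1}\| \lesssim |z|^{-1}$ along $\Gamma$, which makes the integral absolutely convergent for $t > 0$ and, after the substitution $z \mapsto z/t$, yields the uniform bound $\sup_{t>0}\|T(t)\| < \infty$. One then checks the semigroup law $T(t)T(s) = T(t+s)$ via Fubini and the resolvent identity, strong continuity $T(t)x \to x$ as $t \downarrow 0$ for $x$ in a dense set (e.g.\ $x \in \Def(A) \cap \Bild(A)$, using $\int_\Gamma e^{tz} z^{-1} dz$ type identities) together with uniform boundedness, holomorphy of $t \mapsto T(t)$ by differentiating under the integral sign, and finally that the generator of $T$ is $-A$ by comparing resolvents: $\int_0^\infty e^{-\mu t} T(t)\, dt = (\mu + A)^{-1}$ for $\mu$ in a right half-plane, which identifies the generator uniquely.

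For the converse, suppose $-A$ generates a bounded holomorphic strongly continuous semigroup $T$ on a sector $\Sigma_\delta$ with $\sup_{z \in \Sigma_\delta}\|T(z)\| < \infty$ for some $\delta \in (0,\pi/2]$. Bounded holomorphy gives, by the Cauchy integral formula on circles inside the sector of holomorphy, the estimate $\|AT(t)\| = \|\timeD T(t)\| \lesssim 1/t$ for $t > 0$. From the Laplace transform representation $(\mu + A)^{-1} = \int_0^\infty e^{-\mu t} T(t)\, dt$, valid for $\operatorname{Re}\mu > 0$, one first gets the half-plane resolvent bound $\|\mu(\mu+A)^{-1}\| \le M$ there; then using the derivative bound and a contour-rotation / Neumann series argument one extends the resolvent analytically and the bound $\|\mu(\mu+A)^{-1}\| \lesssim 1$ to a sector $\Sigma_{\pi/2+\epsilon}$ for some $\epsilon > 0$ depending on $\delta$. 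This shows $\sigma(A) \subset \overline{\Sigma}_{\pi/2-\epsilon}$ with the required resolvent boundedness on $\Sigma_{\pi-(\pi/2-\epsilon)} = \Sigma_{\pi/2+\epsilon}$, i.e.\ $A$ is pseudo-sectorial with $\varphi_A \le \pi/2 - \epsilon < \pi/2$.

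The main obstacle — or rather the point requiring the most care — is the bookkeeping on angles and contours: matching the opening angle of the defining Cauchy integral to the sector $\Sigma_{\pi-\varphi}$ on which the resolvent is controlled, and in the converse direction converting the analyticity sector $\Sigma_\delta$ of the semigroup into a concrete gain $\epsilon$ in the spectral angle via the $\|AT(t)\| \lesssim 1/t$ estimate and contour rotation. Everything else (Fubini applications, resolvent identity manipulations, density arguments for strong continuity) is routine. Since this is a classical fact, in the write-up I would most likely just sketch these steps and cite \cite[Prop. 3.4.4]{Haase} (and possibly \cite{Denk-Hieber-Pruess}) for the full details, noting that the ''pseudo-'' qualifier simply reflects dropping the density-of-range hypothesis, which plays no role in the semigroup construction.
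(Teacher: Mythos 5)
The paper gives no proof of this proposition; it is stated as a ``well-known characterization'' and cited directly from Haase~\cite[Prop.\ 3.4.4]{Haase}. Your sketch reproduces the standard classical argument that reference carries out, and you explicitly note you would cite Haase in the end, so the two treatments coincide in substance.

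One small inaccuracy worth flagging: in the forward direction you propose to verify strong continuity of the Dunford-integral semigroup on the dense subspace $\Def(A)\cap\Bild(A)$. For a \emph{sectorial} operator this set is indeed dense, but here $A$ is only \emph{pseudo}-sectorial, i.e.\ the range need not be dense (indeed in a reflexive $X$ one has $X=\Kern(A)\oplus\overline{\Bild(A)}$, so $\Def(A)\cap\Bild(A)$ fails to be dense whenever $\Kern(A)\neq\{0\}$). This is exactly the distinction the prefix ``pseudo'' encodes, and it is not as innocuous for the strong-continuity step as your closing remark suggests. The fix is immediate: verify $T(t)x\to x$ for $x\in\Def(A)$ only, e.g.\ by writing $x=(\mu+A)^{-1}y$ and using the resolvent identity together with dominated convergence in the Cauchy integral; density of $\Def(A)$ is part of the hypothesis, and the uniform bound on $T(t)$ then gives strong continuity on all of $X$. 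The converse direction and the contour/angle bookkeeping you describe are standard and correct.
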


The just introduced functional calculus is also used to describe an important property of an operator $A$. Let
$A: \Def(A) \subset X \longrightarrow X$
be a sectorial operator. Then $A$
has a \emph{bounded $H^\infty$-calculus} in $X$ if for some $\varphi \in (\varphi_A, \pi)$ there is a constant $C_\varphi > 0$ such that for any $f \in \mathscr{H}_0(\Sigma_\varphi)$ we have
\begin{equation} \label{eq: H^infty}
\| f(A) \|_{\mathscr{L}(X)} \le C_\varphi \| f \|_{L_\infty(\Sigma_\varphi)}.
\end{equation}
In this case $\eqref{eq: H^infty}$ also holds for all bounded holomorphic functions $f$ on $\Sigma_\varphi$.
The infimum over all angles $\varphi \in (\varphi_A, \pi)$, such that $\eqref{eq: H^infty}$ holds with a constant $C_\varphi > 0$, is called \emph{$H^\infty$-angle} and is denoted by $\varphi_A^\infty$.
Likewise we say that $A$ has an \emph{$\RR$-bounded $H^\infty$-calculus} in $X$ if the set
\begin{equation*}
\{ f(A) : f \in \mathscr{H}_0(\Sigma_\varphi), \| f \|_{L_\infty(\Sigma_\varphi)} \le 1 \} \subset \mathscr{L}(X)
\end{equation*}
is $\RR$-bounded and the related $\RR H^\infty$-angle is denoted by $\varphi_A^{\RR,\infty}$.
If $A$ has a bounded $H^\infty$-calculus, then
\begin{equation} \label{eq: H^infty und komplexe Interpolation}
\Def(A^\alpha) = [X,\Def(A)]_\alpha
\end{equation}
holds for all $0 < \alpha < 1$ (see \cite{Denk-Hieber-Pruess}), where the fractional power $A^\alpha: \Def(A^\alpha) \subset X \longrightarrow X$ is defined via the functional calculus above with the function $z \mapsto z^\alpha$.

We recall the following two assertions that frequently occur 
in the context of the $H^\infty$-calculus. They have been proved in~\cite{Denk-Saal-Seiler} (see the proofs of Prop.~2.9 and Prop.~2.7, respectively).

\begin{lemma} \label{thm: Symbolabschaetzung}
~
\begin{enumerate}[(a)]
\item \label{thm: Symbolabschaetzung 1} Let $0 < \varphi < \pi$. Then for all $\alpha \in \mathbb{N}_0^n$ there is a constant $C_{\alpha,\varphi} > 0$ so that for every holomorphic and bounded function $h: \Sigma_\varphi \rightarrow \mathbb{C}$ we have
\begin{equation*}
\sup_{\xi \in \mathbb{R}^n \setminus \{ 0 \}} |\xi|^{|\alpha|} | \partial^\alpha h(|\xi|^2) | \le C_{\alpha,\varphi} \| h \|_{L_\infty(\Sigma_\varphi)}.
\end{equation*}
\item \label{thm: Symbolabschaetzung 2} Let $\frac{\pi}{2} < \varphi < \pi$. Then for $k=0,1$ there is a constant $C_{\varphi} > 0$ so that for every $h \in \mathscr{H}_0(\Sigma_\varphi)$ we have
\begin{equation*}
\sup_{\xi \in \mathbb{R} \setminus \{ 0 \}} |\xi|^k | \partial^k h(i \xi) | \le C_{\varphi} \| h \|_{L_\infty(\Sigma_\varphi)}.
\end{equation*}
\end{enumerate}
\end{lemma}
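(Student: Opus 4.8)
\textbf{Proof plan for Lemma~\ref{thm: Symbolabschaetzung}.}

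The plan is to reduce both assertions to the Cauchy integral formula for holomorphic functions on a sector, combined with elementary book-keeping on the geometry of the maps $\xi\mapsto|\xi|^2$ and $\xi\mapsto i\xi$. For part~(\ref{thm: Symbolabschaetzung 1}), fix $\xi\in\mathbb{R}^n\setminus\{0\}$ and set $w=|\xi|^2>0$, which lies on the positive real axis, hence well inside $\Sigma_\varphi$ for every $\varphi\in(0,\pi)$. I would apply Cauchy's formula on a circle $\partial B(w,\rho)$ with $\rho=\tfrac12 w\sin\varphi$ (or any fixed fraction of $\mathrm{dist}(w,\partial\Sigma_\varphi)$, which is comparable to $w$), to obtain that all derivatives $h^{(j)}(w)$ are bounded by $C_{j,\varphi}\,w^{-j}\|h\|_{L_\infty(\Sigma_\varphi)}$. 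Then $\partial^\alpha[h(|\xi|^2)]$ is, by the multivariate Fa\`a di Bruno / chain rule, a finite sum of terms of the form $h^{(j)}(|\xi|^2)$ times a product of derivatives of $\xi\mapsto|\xi|^2$; each such polynomial factor is homogeneous of degree $2j-|\alpha|$ in $\xi$, so the whole term is bounded by $C\,|\xi|^{-2j}\cdot|\xi|^{2j-|\alpha|}\|h\|_\infty=C\,|\xi|^{-|\alpha|}\|h\|_\infty$. Multiplying by $|\xi|^{|\alpha|}$ gives the claim; the constant depends only on $\alpha$ and $\varphi$.

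For part~(\ref{thm: Symbolabschaetzung 2}), the geometry is slightly more delicate because the point $i\xi$ lies on the imaginary axis, which is inside $\Sigma_\varphi$ precisely when $\varphi>\tfrac\pi2$; this is exactly the hypothesis. For such $\varphi$ one has $\mathrm{dist}(i\xi,\partial\Sigma_\varphi)=|\xi|\cos(\varphi-\tfrac\pi2)=|\xi|\sin\varphi'$ for the fixed angle, so again a Cauchy estimate on a circle of radius comparable to $|\xi|$ yields $|h'(i\xi)|\le C_\varphi|\xi|^{-1}\|h\|_{L_\infty(\Sigma_\varphi)}$, which handles $k=1$; the case $k=0$ is immediate from $\|h\|_\infty$. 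Strictly, since here $h\in\mathscr{H}_0(\Sigma_\varphi)$ rather than merely bounded, $h$ need not extend continuously to $0$, but this causes no trouble: the Cauchy estimate is applied at the interior point $i\xi$ with $\xi\neq0$ and uses only the sup-norm of $h$ on $\Sigma_\varphi$, and $\mathscr{H}_0\subset H^\infty$ on the sector.

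I do not expect a genuine obstacle here; the only point requiring a little care is making the constants \emph{uniform in $h$}, which is automatic from the Cauchy integral representation since the radius of the contour depends only on $w$ (resp.\ $\xi$) and the fixed angle $\varphi$, not on $h$. A secondary bit of care is the combinatorics in part~(a): one should verify that in the chain-rule expansion the index $j$ ranges only over $1\le j\le|\alpha|$ so that finitely many terms occur and the homogeneity count $2j-|\alpha|$ is exact, but this is routine. Both assertions are standard and are cited from \cite{Denk-Saal-Seiler}; the above is merely the argument underlying those references.
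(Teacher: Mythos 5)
The paper does not prove Lemma~\ref{thm: Symbolabschaetzung}; it simply cites \cite{Denk-Saal-Seiler}. Your argument --- a Cauchy estimate on a circle of radius comparable to $\mathrm{dist}(\,\cdot\,,\partial\Sigma_\varphi)$, combined with Fa\`a di Bruno's formula and a homogeneity count for $g(\xi)=|\xi|^2$ --- is precisely the standard argument underlying that citation, and it is correct: in part~(\ref{thm: Symbolabschaetzung 1}) each term in the chain-rule expansion is $h^{(j)}(|\xi|^2)$ (bounded by $C_{j,\varphi}|\xi|^{-2j}\|h\|_\infty$) times a polynomial that is homogeneous of degree $2j-|\alpha|$, and in part~(\ref{thm: Symbolabschaetzung 2}) only $k\in\{0,1\}$ and a single Cauchy derivative at $i\xi$ are needed, with $\mathscr{H}_0(\Sigma_\varphi)\subset H^\infty(\Sigma_\varphi)$ taking care of the boundedness hypothesis.

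One small slip worth fixing: in part~(\ref{thm: Symbolabschaetzung 2}) you write $\mathrm{dist}(i\xi,\partial\Sigma_\varphi)=|\xi|\cos(\varphi-\tfrac\pi2)=|\xi|\sin\varphi$, but the correct value is $|\xi|\sin(\varphi-\tfrac\pi2)=-|\xi|\cos\varphi$. For $\varphi$ close to $\tfrac\pi2$ your expression is close to $|\xi|$ while the true distance tends to $0$, so a circle of radius a fixed fraction of $|\xi|\sin\varphi$ would in general escape the sector. Since the only fact you actually use is that the distance is comparable to $|\xi|$ with a constant depending on the fixed $\varphi>\tfrac\pi2$ (which is true), the argument stands, but the displayed formula should be corrected.
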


Next we give the definition of maximal regularity (due to \cite{Kunstmann-Weis}) for an operator $A: \Def(A) \subset X \rightarrow X$, which is the generator of a bounded holomorphic strongly continuous semigroup $(S(t))_{t \ge 0}$ on a complex Banach space $X$. Therefore, we fix $1 < p < \infty$ and $0 < T \le \infty$. $A$ has \emph{maximal $L_p$-regularity} on $(0,T)$ if for all $f \in L_p((0,T),X)$ the solution
\begin{equation*}
u(t) = \int_0^t S(t-s) f(s) ds
\end{equation*}
of the Cauchy problem
\begin{equation}
\begin{cases}
u'(t) - Au(t) &= f(t), \quad t \in (0,T) \\
\qquad \qquad u(0) &= 0
\end{cases}
\end{equation}
is Fréchet differentiable a.e., takes its values in $\Def(A)$ a.e.\ and $u',Au \in L_p((0,T),X)$. In this case we get
\begin{equation} \label{eq: Maximale Regularität}
\| u' \|_{L_p((0,T),X)} + \| Au \|_{L_p((0,T),X)} \le C \| f \|_{L_p((0,T),X)}
\end{equation}
by application of the closed graph theorem.
We write $A \in \mathrm{MR}(X,C)$ if $A$ has maximal $L_p$-regularity for some (or equivalently for all) $1 < p < \infty$ on some $(0,T)$ so that~\eqref{eq: Maximale Regularität} holds with a constant $C = C(T) > 0$. If $A \in \mathrm{MR}(X,C)$ and $C$ doesn't depend on $T$ (i.e.,~\eqref{eq: Maximale Regularität} holds for $T = \infty$) we write $A \in \mathrm{MR}(X)$.

Now we take a look at the advantages of maximal regularity. Again for a complex Banach space $X$, let
$A: \Def(A) \subset X \rightarrow X$ be the generator of a bounded holomorphic strongly continuous semigroup. For $1 < p < \infty$ and $T \in (0,\infty]$ we set
\begin{equation*}
\mathbb{E}_T := H^1_p((0,T),X) \cap L_p((0,T),\Def(A))
\end{equation*}
(the solution space of the related Cauchy problem)
and
\begin{equation*}
\mathbb{F}_T \times \mathbb{I}
:= L_p((0,T),X) \times \big\{ x = u(0) : u \in \mathbb{E}_T \big\}
\end{equation*}
(the data space).
Note that $\mathbb{I}$ is a Banach space with the norm $\| x \|_\mathbb{I} = \inf_{u(0) = x} \| u \|_{\mathbb{E}_T}$, independent of $T$ and we have 
\begin{equation} \label{charini}
\mathbb{I} = \big( X,\Def(A) \big)_{1 - \frac{1}{p},p}
\end{equation}
(see~\cite{Pruess-Simonett}, Prop. 3.4.4). If
$A$ has maximal $L_p$-regularity on a finite interval $(0,T)$, then the solution operator
\begin{equation} \label{eq: Lösungsoperator}
L: \mathbb{E}_T \longrightarrow \mathbb{F}_T \times \mathbb{I}, \quad
u \longmapsto
\begin{pmatrix}
(\timeD - A)u \\
u(0)
\end{pmatrix}
\end{equation}
is an isomorphism. This leads to the estimate
\begin{equation} \label{eq: Abschätzung MaxReg}
\| u \|_{H^1_p((0,T),X) \cap L_p((0,T),\Def(A))} \le C(T) \big( \| f \|_{L_p((0,T),X)} + \| x \|_\mathbb{I} \big),
\end{equation}
when $u := L^{-1} \left( \begin{smallmatrix} f \\ x \end{smallmatrix} \right)$ is the solution for some $\big( \begin{smallmatrix} f \\ x \end{smallmatrix} \big) \in \mathbb{F}_T \times \mathbb{I}$.

\begin{lemma} \label{thm: Abschätzung MR}
Let $X$ be a Banach space, $1 < p < \infty$, $0 < T_0 < \infty$ and let $A \in \mathrm{MR}(X,C(T_0))$. Then there exists a constant $C' = C'(T_0) > 0$ such that
\begin{equation*}
\| L^{-1} \left( \begin{smallmatrix} f \\ 0 \end{smallmatrix} \right) \|_{H^1_p((0,T),X) \cap L_p((0,T),\Def(A))} \le C'(T_0) \| f \|_{L_p((0,T),X)}
\end{equation*}
holds for all $T \in (0,T_0]$ and for all $f \in L_p((0,T),X)$,
where $L$ is the solution operator from~\eqref{eq: Lösungsoperator}.
\end{lemma}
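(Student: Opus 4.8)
The plan is to reduce the claim about the interval $(0,T)$, uniformly in $T\in(0,T_0]$, to the fixed interval $(0,T_0)$ on which we already know maximal regularity with constant $C(T_0)$. First I would recall that $A\in\mathrm{MR}(X,C(T_0))$ means precisely that the solution operator $L$ from~\eqref{eq: Lösungsoperator}, taken on the interval $(0,T_0)$, is an isomorphism and that the estimate~\eqref{eq: Abschätzung MaxReg} holds with constant $C(T_0)$; specialising to zero initial data this gives
\begin{equation*}
\| L^{-1} \left( \begin{smallmatrix} g \\ 0 \end{smallmatrix} \right) \|_{H^1_p((0,T_0),X) \cap L_p((0,T_0),\Def(A))} \le C(T_0) \| g \|_{L_p((0,T_0),X)}
\end{equation*}
for all $g\in L_p((0,T_0),X)$.

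Next, given $T\in(0,T_0]$ and $f\in L_p((0,T),X)$, I would extend $f$ by zero to a function $\tilde f\in L_p((0,T_0),X)$ with $\|\tilde f\|_{L_p((0,T_0),X)}=\|f\|_{L_p((0,T),X)}$. The point is that the solution $u$ on $(0,T)$ with data $(f,0)$ is the restriction to $(0,T)$ of the solution $\tilde u$ on $(0,T_0)$ with data $(\tilde f,0)$: indeed both are given by the variation-of-constants formula $u(t)=\int_0^t S(t-s)f(s)\,ds$, and since $\tilde f=f$ on $(0,T)$ the two formulas agree for $t\le T$. Hence $L^{-1}\left(\begin{smallmatrix} f \\ 0\end{smallmatrix}\right)=\tilde u|_{(0,T)}$, and restriction to a subinterval does not increase the $H^1_p\cap L_p(\Def(A))$-norm, so
\begin{equation*}
\| L^{-1} \left( \begin{smallmatrix} f \\ 0 \end{smallmatrix} \right) \|_{H^1_p((0,T),X) \cap L_p((0,T),\Def(A))} \le \| \tilde u \|_{H^1_p((0,T_0),X) \cap L_p((0,T_0),\Def(A))} \le C(T_0) \| \tilde f \|_{L_p((0,T_0),X)} = C(T_0)\|f\|_{L_p((0,T),X)}.
\end{equation*}
Setting $C'(T_0):=C(T_0)$ then finishes the proof.

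The only genuinely delicate point is the compatibility claim that the solution operator on the short interval is obtained by restriction from the long one with the zero-extended right-hand side; everything else is bookkeeping with norms of restrictions. This compatibility is, however, immediate from the explicit mild-solution formula stated before Lemma~\ref{thm: Abschätzung MR}, together with the fact (part of the definition of maximal regularity) that for data in $\mathbb{F}_{T_0}\times\mathbb{I}$ this mild solution indeed lies in $\mathbb{E}_{T_0}$ and is the unique preimage under $L$. One should also note that the norm of the restriction of a function in $H^1_p((0,T_0),X)\cap L_p((0,T_0),\Def(A))$ to $(0,T)$ is bounded by the norm on $(0,T_0)$ simply because it is an integral of nonnegative quantities over a smaller set; no extension operator in the sense of the appendix is needed here. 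I would present the argument in essentially this order: restate what $\mathrm{MR}(X,C(T_0))$ gives, zero-extend $f$, invoke compatibility of mild solutions, and conclude by monotonicity of the norms under restriction.
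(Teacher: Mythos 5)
Your proposal is correct and is essentially the same argument as the paper's: extend $f$ by zero to $(0,T_0)$, observe that the mild-solution formula gives consistency of the short-interval solution with the restriction of the long-interval solution, and conclude by monotonicity of the relevant norms under restriction to a subinterval. The only presentational difference is that the paper restricts the seminorm estimate~\eqref{eq: Maximale Regularität} for $u'$ and $Au$ and then adds an explicit Poincar\'e step (with constant uniform in $T\in(0,T_0]$, using $u(0)=0$) to control $\|u\|_{L_p((0,T),X)}$, whereas you restrict the full $\mathbb{E}_{T_0}$-norm estimate~\eqref{eq: Abschätzung MaxReg} in one go; these are equivalent, since the missing $\|u\|_{L_p}$ bound on $(0,T_0)$ is precisely what Poincar\'e supplies. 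One small imprecision worth fixing: $A\in\mathrm{MR}(X,C(T_0))$ only fixes the constant in~\eqref{eq: Maximale Regularität}, not in~\eqref{eq: Abschätzung MaxReg}, so the final constant should be taken as the (possibly larger) constant in the latter estimate rather than literally $C(T_0)$; this does not affect the validity since the lemma allows any $C'(T_0)$.
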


\begin{proof}
By the trivial extension of $f \in L_p((0,T),X)$ to $(0,T_0)$ we get the estimate~\eqref{eq: Maximale Regularität} with a constant independent of $T \in (0,T_0]$. Now the assertion follows from the fact that the Poincaré inequality $\| u \|_{L_p((0,T),X)} \le K \| u' \|_{L_p((0,T),X)}$ holds with a constant $K > 0$, which is independent of $T \in (0,T_0]$ as well.
\end{proof}

\begin{lemma} \label{thm: Einbettung Phasenraum}
Let $1 < p < \infty$ and $T \in (0,\infty]$. Then, with the notation above, we have the continuous embedding
\begin{equation*}
\mathbb{E}_T \subset BUC([0,T),\mathbb{I})
\end{equation*}
(where $BUC$, as usual, means bounded and uniformly continuous).
Here the operator $A: \Def(A) \subset X \rightarrow X$ only needs to be closed and densely defined in a Banach space $X$.
\end{lemma}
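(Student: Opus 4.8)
The plan is to establish the embedding $\mathbb{E}_T \subset BUC([0,T),\mathbb{I})$ in two stages: first prove it on a bounded interval $(0,T_0)$ with $T_0 < \infty$, then transfer to the case $T = \infty$ by a scaling/localization argument; actually, since on $[0,T)$ with $T$ finite the statement already covers all bounded cases, the main subtlety is really the interplay between the trace space $\mathbb{I} = (X,\Def(A))_{1-1/p,p}$ and the continuity in time. First I would recall the standard fact (from the theory of vector-valued Sobolev spaces, e.g.\ as in \cite{Amann97}) that $H^1_p((0,T),X) \hookrightarrow C([0,T],X)$ — this is just the one-dimensional Sobolev embedding $W^{1,p}(I) \hookrightarrow C(\overline I)$ applied to $X$-valued functions, valid for any Banach space $X$. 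So any $u \in \mathbb{E}_T$ has a continuous $X$-valued representative and $u(t)$ makes sense pointwise.

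The heart of the matter is upgrading continuity from $X$ to $\mathbb{I}$. The key step is the classical trace characterization: for $u \in H^1_p((0,T),X) \cap L_p((0,T),\Def(A))$, one has for each fixed $t_0 \in [0,T)$ that the shifted function $v(\cdot) := u(t_0 + \cdot)$ lies in $\mathbb{E}_{T-t_0}$, hence $u(t_0) = v(0) \in \mathbb{I}$ with $\|u(t_0)\|_{\mathbb{I}} \le \|v\|_{\mathbb{E}_{T-t_0}} \le \|u\|_{\mathbb{E}_T}$; this already gives $u \in L_\infty([0,T),\mathbb{I})$, in fact $\sup_{t} \|u(t)\|_{\mathbb{I}} \lesssim \|u\|_{\mathbb{E}_T}$. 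For continuity and uniform continuity in the $\mathbb{I}$-norm, I would argue by density: for $u \in C^\infty_c((0,T),\Def(A))$ (or a suitable dense subspace of $\mathbb{E}_T$), $u$ is plainly in $C([0,T),\Def(A)) \subset C([0,T),\mathbb{I})$, and uniform continuity on the compact time-slices follows. Then combine this with the quantitative bound $\|u\|_{BUC([0,T),\mathbb{I})} \lesssim \|u\|_{\mathbb{E}_T}$ (obtained from the trace estimate above, uniformly in the base point, using that $\mathbb{I}$ has a $T$-independent norm) to pass to general $u \in \mathbb{E}_T$ by approximation. One must check that the approximating sequence converges in $BUC([0,T),\mathbb{I})$, which is immediate once one has the estimate applied to differences.

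The main obstacle, I expect, is handling the case $T = \infty$ cleanly and making the "uniform" part of $BUC$ precise when the time interval is unbounded: near $t = \infty$ the semigroup need not decay (we only assume $A$ closed and densely defined, not even sectorial here), so one cannot rely on any decay estimate. The way around this is that the norm on $\mathbb{I}$ is independent of the length of the interval, so the trace estimate $\|u(t_0)\|_{\mathbb{I}} \le \|u\|_{\mathbb{E}_{T-t_0}} \le \|u\|_{\mathbb{E}_T}$ holds uniformly in $t_0$ regardless of $T$, and uniform continuity follows from a Cauchy-type estimate: for $t_0 < t_1$, $\|u(t_1) - u(t_0)\|_{\mathbb{I}}$ is controlled by $\|u\|_{\mathbb{E}_{(t_0,T)}}$ restricted to a short interval, which tends to $0$ as $t_1 - t_0 \to 0$ by absolute continuity of the $L_p$-norm of $u'$ and of $Au$. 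A careful statement of this last point — essentially that $t \mapsto u(t)$ is $\mathbb{I}$-valued absolutely continuous on compact subintervals with a modulus of continuity depending only on $\|u\|_{\mathbb{E}_T}$ and the local integrability — is what makes the $BUC$ (rather than merely $C$) conclusion go through. Since $A$ is only assumed closed and densely defined, all of this must avoid the functional calculus and rest solely on the interpolation-space description \eqref{charini} of $\mathbb{I}$ together with elementary vector-valued Sobolev embeddings.
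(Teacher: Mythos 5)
The heart of the matter is where you justify uniform continuity, and there you lean on a claim that is not actually true: that the $\mathbb{I}$-norm is independent of the interval length so that $\|u(t_1)-u(t_0)\|_{\mathbb{I}}$ is controlled by $\|u\|_{\mathbb{E}_{(t_0,t_1)}}$. The trace-space norm $\inf\{\|w\|_{\mathbb{E}_{T'}}:w(0)=x\}$ does \emph{not} carry a $T'$-uniform comparison to the interpolation norm as $T'\to 0$. To see this, take $X=\mathbb{R}$, $A=0$: the constant function $w\equiv x$ on $(0,T')$ is admissible and gives $\inf\le |x|\,(T')^{1/p}\to 0$, so the ``$T'$-local'' $\mathbb{I}$-norm shrinks to zero even though $\|x\|_{(X,\Def(A))_{1-1/p,p}}$ is a fixed positive quantity. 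Consequently, knowing that $\|u|_{(t_0,t_1)}\|_{\mathbb{E}_{(t_0,t_1)}}$ is small by absolute continuity tells you nothing about $\|u(t_1)-u(t_0)\|_{\mathbb{I}}$ measured in the fixed interpolation norm; the inequality you need runs in the opposite direction and its constant blows up. For the same reason your boundedness estimate, as written via the shift $v(\cdot)=u(t_0+\cdot)$ on $(0,T-t_0)$, degenerates as $t_0\nearrow T$ when $T<\infty$: the trace constant for the interval $(0,T-t_0)$ is not uniform. (A fix is to reflect backwards when $t_0>T/2$, or to extend $u$ past $T$ first; but your proposal does not do either.)

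Your first route, approximating by smooth compactly supported functions and using that $BUC$ is sup-norm closed, is the correct mechanism, and it is essentially equivalent to what the paper does — but only once one has the boundedness estimate in hand, which itself requires the $T$-uniformity issue above to be resolved. The paper's proof sidesteps all of this by proving $T=\infty$ \emph{first}: there the shift $\tau_{t_0}$ does not change the interval, so the trace estimate $\|u(t_0)\|_{\mathbb{I}}\le C\|\tau_{t_0}u\|_{\mathbb{E}_\infty}\le C\|u\|_{\mathbb{E}_\infty}$ is automatically uniform, and uniform continuity follows from $\|u(t+h)-u(t)\|_{\mathbb{I}}\le C\|\tau_t(\tau_h u-u)\|_{\mathbb{E}_\infty}\le C\|\tau_h u-u\|_{\mathbb{E}_\infty}\to 0$ by strong continuity of translations on $L_p(\mathbb{R}_+,X)\cap$(derivative/domain norm). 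The finite-$T$ case is then obtained by a single extension/retraction, with the extension supplying the missing uniformity. You have the right ingredients (trace estimate, density, closedness of $BUC$), but the order of quantifiers in your $T<\infty$ argument is wrong, and the absolute-continuity mechanism cannot replace the translation-semigroup (or density) mechanism.
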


\begin{proof}
The case $T = \infty$ follows essentially from the strong continuity of the translation semigroup. Then, by a standard extension and retraction argument one gets the case $T < \infty$ as a consequence. See~\cite[Prop. 1.4.2]{Amann_Quasilinear} for details.
\end{proof}

In the theory of partial differential equations, the notions of class $\HT$ and $\Propa$ for Banach spaces turned out to be significant.
A Banach space $X$ is of class $\mathcal{HT}$ if the \emph{Hilbert transform}
\begin{equation*}
H: \mathscr{S}(\mathbb{R},X) \longrightarrow \mathcal{M}(\mathbb{R},X), \quad
Hf(t) = \lim_{\epsilon \searrow 0} \int_{|s| > \epsilon} \frac{f(t-s)}{s} ds
\end{equation*}
has an extension $H \in \mathscr{L}(L_p(\mathbb{R},X))$ for any (or equivalently for one) $1 < p < \infty$.
A complex Banach space $X$ has \emph{$\Propa$} if there exist $1 \le p < \infty$, two probability spaces $P = (\Omega,\mathcal{A},\mu),P' = (\Omega',\mathcal{A}',\mu')$ with $\Eps_P,\Eps_{P'} \ne \leer$ and a constant $\alpha > 0$ such that for all $N \in \mathbb{N}$, $x_{ij} \in X$, $a_{ij} \in \mathbb{C}$, $|a_{ij}| \le 1 ~(i,j = 1, \dots ,N)$
and for all $(\epsilon_1, \dots ,\epsilon_N) \in \Eps_P$, $(\epsilon'_1, \dots ,\epsilon'_N) \in \Eps_{P'}$ we have
\begin{equation} \label{eq: Eigenschaft alpha}
\Norm{\sum_{i,j=1}^N \epsilon_i \epsilon'_j a_{ij} x_{ij}}_{L_p(\Omega \times \Omega',X)}
\le \alpha \Norm{\sum_{i,j=1}^N \epsilon_i \epsilon'_j x_{ij}}_{L_p(\Omega \times \Omega',X)}.
\end{equation}
A useful application of $\Propa$ is the following one,
which is a direct consequence of the Kalton-Weis theorem (see~\cite[Thm. 4.5.6]{Pruess-Simonett}).

\begin{theorem} \label{thm: H^infty und Propa impliziert RH^infty}
Let $X$ be a Banach space with $\Propa$ and let $A: D(A) \subset X \rightarrow X$ be an operator with a bounded $H^\infty$-calculus. Then $A$ has an $\RR$-bounded $H^\infty$-calculus with $\varphi_A^{\infty} = \varphi_A^{\RR,\infty}$.
\end{theorem}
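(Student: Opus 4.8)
The plan is to obtain this as a combination of two facts: the bounded $H^\infty$-calculus of $A$, and the Kalton--Weis theorem on the functional calculus of $\RR$-bounded families, which in the form we need (see \cite[Thm.~4.5.6]{Pruess-Simonett}) reads roughly as follows: if $A$ is $\RR$-sectorial with a bounded $H^\infty(\Sigma_\varphi)$-calculus and $\mathcal{F} \subset H^\infty(\Sigma_\varphi)$ is a family of scalar holomorphic functions that is $\RR$-bounded in the sense adapted to property $(\alpha)$, then $\{ f(A) : f \in \mathcal{F} \} \subset \mathscr{L}(X)$ is $\RR$-bounded, with $\RR$-bound controlled by the $H^\infty$-calculus constant $C_\varphi$ times the $\RR$-bound of $\mathcal{F}$. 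With this in hand, everything reduces to showing that for each $\varphi \in (\varphi_A^\infty,\pi)$ the unit ball $\mathcal{U}_\varphi = \{ f \in \mathscr{H}_0(\Sigma_\varphi) : \| f \|_{L_\infty(\Sigma_\varphi)} \le 1 \}$ is $\RR$-bounded in the required sense --- and this is precisely where $\Propa$ is used.

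For that step I would argue as follows. By the McIntosh approximation and a density argument it suffices to treat finitely many functions $f_1,\dots,f_N \in \mathcal{U}_\varphi$ after discretizing the contour integrals representing $(\psi_m f_i)(A)$ (with the usual auxiliary functions $\psi_m \in \mathscr{H}_0(\Sigma_\varphi)$ realizing $\psi_m(A) \to I$ strongly on $\overline{\Def(A) \cap \Bild(A)}$); this writes each $f_i(A)$ as a limit of finite sums $\sum_j f_i(z_j)\, w_j\, (z_j + A)^{-1}$ with nodes $z_j$ and weights $w_j$ common to all $i$. One then feeds the point values $a_{ij}$ obtained from $f_i(z_j)$ (after the normalization $|a_{ij}| \le 1$ forced upon us) into the defining inequality~\eqref{eq: Eigenschaft alpha} of $\Propa$; the Kahane contraction principle absorbs the remaining scalar weights and the resolvent bounds. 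Passing to the limit yields the desired $\RR$-bound for $\mathcal{U}_\varphi$. Note in particular that applying this to the uniformly bounded functions $z \mapsto \lambda(\lambda+z)^{-1}$ ($\lambda \in \Sigma_{\pi-\varphi}$) shows that $A$ is $\RR$-sectorial, so the $\RR$-sectoriality hypothesis of the Kalton--Weis theorem is satisfied and no circularity occurs.

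Putting the two steps together, for every $\varphi \in (\varphi_A^\infty,\pi)$ the family $\{ f(A) : f \in \mathscr{H}_0(\Sigma_\varphi),\, \| f \|_{L_\infty(\Sigma_\varphi)} \le 1 \}$ is $\RR$-bounded; by the definition above this means $A$ has an $\RR$-bounded $H^\infty$-calculus and $\varphi_A^{\RR,\infty} \le \varphi$. Letting $\varphi \downarrow \varphi_A^\infty$ gives $\varphi_A^{\RR,\infty} \le \varphi_A^\infty$, while the reverse inequality $\varphi_A^\infty \le \varphi_A^{\RR,\infty}$ is trivial since $\RR$-boundedness implies uniform boundedness. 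Hence $\varphi_A^\infty = \varphi_A^{\RR,\infty}$.

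The main obstacle is the second paragraph: turning $\Propa$, which only controls double Rademacher sums with scalar coefficients, into honest $\RR$-boundedness of the unit ball of $H^\infty(\Sigma_\varphi)$ as required by the Kalton--Weis machinery. This needs a careful approximation of arbitrary bounded holomorphic functions together with the contour representation of $f(A)$ and the contraction principle to handle normalizing constants; the remaining bookkeeping is routine. Since \cite[Thm.~4.5.6]{Pruess-Simonett} already carries out exactly this argument, in the paper it is enough to cite it directly.
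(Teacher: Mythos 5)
Your proposal arrives at exactly what the paper does: the theorem is stated without proof as a direct consequence of the Kalton--Weis theorem, citing Prüss--Simonett [Thm.~4.5.6], which is the same reference you give and which (as you correctly observe in your final sentence) makes the rest of your sketch unnecessary. The sketch of the internal mechanism is roughly on track in spirit (discretize the Dunford integral, feed the point values $f_i(z_j)$ into the defining inequality of $\Propa$, absorb constants via Kahane), though your phrasing of the Kalton--Weis hypothesis as requiring $\RR$-boundedness of the scalar family $\mathcal{F}$ is a bit garbled --- for a scalar-valued family that is just uniform boundedness by Kahane's contraction principle, and $\Propa$ enters in the \emph{proof} of the Kalton--Weis theorem, not as a hypothesis on $\mathcal{F}$.
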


The following operator-valued version of Mihklin's theorem is important for our purposes as well as the subsequent characterization of maximal $L_p$-regularity.
The results are due to Girardi and Weis (see~\cite{Weis-Girardi} or~\cite[Thm. 4.3.9, Thm. 4.4.4]{Pruess-Simonett}).

\begin{theorem} \label{thm: Satz von Mikhlin operatorwertig}
Let $X,Y$ be complex Banach spaces of class $\HT$, which have $\Propa$ and let $1 < p < \infty$.
For $m_\lambda \in C^n(\mathbb{R}^n \setminus \{ 0 \} , \mathscr{L}(X,Y))$, $~\lambda \in \Lambda$ assume that
$\kappa_\alpha
:= \RR_p \{ \xi^\alpha \partial^\alpha m_\lambda(\xi) :
                 \xi \in \mathbb{R}^n \setminus \{ 0 \}, \lambda \in \Lambda \}
< \infty$
for each $\alpha \in \{ 0,1 \}^n$.
Then the operator
\begin{equation*}
\Fourier^{-1} m_\lambda \Fourier : \mathscr{S}(\mathbb{R}^n,X) \longrightarrow \mathscr{S}'(\mathbb{R}^n,Y)
\end{equation*}
has a unique extension
$T_\lambda \in \mathscr{L}(L_p(\mathbb{R}^n,X),L_p(\mathbb{R}^n,Y))$
for every $\lambda \in \Lambda$ and we have
\begin{equation*}
\RR_p \{ T_\lambda : \lambda \in \Lambda \}
\le C_{p,n} \sum_{\alpha \in \{ 0,1 \}^n} \kappa_\alpha
=: C.
\end{equation*}
In particular, we have $\| \Fourier^{-1} m_\lambda \Fourier f \|_{L_p(Y)} \le C \| f \|_{L_p(X)}$ for
$f \in \mathscr{S}(\mathbb{R}^n,X)$ and $\lambda \in \Lambda$.
\end{theorem}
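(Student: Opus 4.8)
The plan is to reduce everything to the one-dimensional case, where the $\HT$-property does the essential work, and then to lift this to $\mathbb{R}^n$ by iterating in the coordinate directions, property $(\alpha)$ being the feature that makes the iteration close. Since $\mathscr{S}(\mathbb{R}^n,X)$ is dense in $L_p(\mathbb{R}^n,X)$ and the asserted inequality $\| \Fourier^{-1} m_\lambda \Fourier f \|_{L_p(Y)} \lesssim \| f \|_{L_p(X)}$ is to be shown on Schwartz functions only, the extension $T_\lambda \in \mathscr{L}(L_p(\mathbb{R}^n,X),L_p(\mathbb{R}^n,Y))$ and its $\RR_p$-bound follow automatically (the $\RR$-bound being read off by carrying a randomized sum $\sum_i \epsilon_i(\cdot)$ through the same estimates).

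\emph{Step 1 (the scalar dimension $n=1$).} First I would record, as a consequence of $X,Y\in\HT$, that for every interval $I\subset\mathbb{R}$ the frequency projection $S_I=\Fourier^{-1}\mathbf{1}_I\Fourier$ is bounded on $L_p(\mathbb{R},X)$ uniformly in $I$; randomizing over the dyadic blocks $I_j=\{2^{j-1}\le|\xi|<2^j\}$ and invoking Kahane's contraction principle upgrades this to the $\RR_p$-boundedness of the Littlewood--Paley family $\{\Delta_j:=S_{I_j}\}_{j\in\mathbb{Z}}$ on $L_p(\mathbb{R},X)$ and on $L_p(\mathbb{R},Y)$. Then I would decompose $m_\lambda=\sum_j\chi_j m_\lambda$ with $\chi_j$ a dyadic partition of unity and expand each piece $\chi_j m_\lambda$ into a Fourier series on the annulus $I_j$; one integration by parts, using $\RR_p\{m_\lambda(\xi)\}\le\kappa_0$ and $\RR_p\{\xi m_\lambda'(\xi)\}\le\kappa_1$, bounds the $\RR$-norms of the operator-valued Fourier coefficients $c^\lambda_{j,k}\in\mathscr{L}(X,Y)$ with weights summable after Cauchy--Schwarz. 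Writing $\Fourier^{-1}m_\lambda\Fourier f=\sum_j\sum_k c^\lambda_{j,k}\,M_{j,k}\,\widetilde{\Delta}_j\Delta_j f$ with $M_{j,k}$ modulations (isometries of $L_p$) and $\widetilde{\Delta}_j$ a fattened block, one reassembles by the $\RR_p$-boundedness of $\{\Delta_j\}$, the $\RR$-bounds on the $c^\lambda_{j,k}$, and Kahane's principle. This yields the one-dimensional assertion, and — keeping the index $\lambda$ (and any other parameters) throughout — its parametrized $\RR_p$-bounded version, with constant $\lesssim\kappa_0+\kappa_1$; property $(\alpha)$ is not needed here.

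\emph{Step 2 (iteration in the coordinates).} For $n\ge2$ I would freeze $\xi'=(\xi_1,\dots,\xi_{n-1})$ and view $\xi_n\mapsto m_\lambda(\xi',\xi_n)$ as a one-dimensional symbol whose Mikhlin quantities are $\RR_p$-bounded by $\kappa_0$ and $\kappa_{e_n}$ \emph{uniformly in $(\lambda,\xi')$}; the parametrized Step 1 then produces an $\RR_p$-bounded family $T^{(n)}_{\lambda,\xi'}\in\mathscr{L}\big(L_p(\mathbb{R},X),L_p(\mathbb{R},Y)\big)$ with $\RR_p\lesssim\kappa_0+\kappa_{e_n}$, and, after the Fubini identification $L_p(\mathbb{R}^n,X)=L_p(\mathbb{R}^{n-1},L_p(\mathbb{R},X))$, one has $\Fourier^{-1}_{\mathbb{R}^n}m_\lambda\Fourier=\Fourier^{-1}_{x'}\big[\xi'\mapsto T^{(n)}_{\lambda,\xi'}\big]\Fourier_{x'}$. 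Since $L_p(\mathbb{R},X)$ and $L_p(\mathbb{R},Y)$ are again of class $\HT$ and still have property $(\alpha)$ (both properties pass to $L_p$-valued spaces), I may invoke the theorem in dimension $n-1$ for the operator-valued symbol $\xi'\mapsto T^{(n)}_{\lambda,\xi'}$. Its Mikhlin quantities are again controlled by Step 1: for $\alpha'\in\{0,1\}^{n-1}$ one has $(\xi')^{\alpha'}\partial^{\alpha'}_{\xi'}T^{(n)}_{\lambda,\xi'}=\Fourier^{-1}_{x_n}\big((\xi')^{\alpha'}\partial^{\alpha'}_{\xi'}m_\lambda\big)\Fourier_{x_n}$, and the underlying one-dimensional symbol $\xi_n\mapsto(\xi')^{\alpha'}\partial^{\alpha'}_{\xi'}m_\lambda(\xi)$ has $\RR_p$-bound $\le\kappa_{\alpha'}$ and, after applying $\xi_n\partial_n$, $\le\kappa_{\alpha'+e_n}$; hence $\RR_p\{(\xi')^{\alpha'}\partial^{\alpha'}_{\xi'}T^{(n)}_{\lambda,\xi'}\}\lesssim\kappa_{\alpha'}+\kappa_{\alpha'+e_n}$. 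Because $\alpha'\mapsto(\alpha',0),(\alpha',1)$ bijects $\{0,1\}^{n-1}\times\{0,1\}$ onto $\{0,1\}^n$, the induction closes with $\RR_p\{T_\lambda:\lambda\in\Lambda\}\le C_{p,n}\sum_{\alpha\in\{0,1\}^n}\kappa_\alpha$.

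\emph{Main obstacle.} The genuine technical core is Step 1: converting the bare $\HT$-hypothesis into $\RR_p$-boundedness of the Littlewood--Paley decomposition and controlling the operator-valued Fourier coefficients of the dyadic pieces of $m_\lambda$ requires care, and $\HT$ is indispensable there. For $n\ge2$ the difficulty shifts: passing through the $L_p$-valued spaces forces one to handle simultaneously the Rademacher system of the $\RR$-bound and the independent Littlewood--Paley randomizations arising from the successive coordinate directions, and pulling the bounded scalar/operator coefficients through these nested randomized sums with a constant depending only on $p$ and $n$ is precisely what property $(\alpha)$ is tailored for — the statement fails for $n\ge2$ without it.
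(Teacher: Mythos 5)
The paper does not prove this theorem; it attributes it to Girardi and Weis and cites \cite{Weis-Girardi} and \cite[Thms.\ 4.3.9, 4.4.4]{Pruess-Simonett}. Your overall strategy --- a one-dimensional version via vector-valued Littlewood--Paley and Fourier-series expansion of the dyadic pieces, followed by a coordinate-by-coordinate induction through the Fubini identification $L_p(\mathbb{R}^n,X)=L_p(\mathbb{R}^{n-1},L_p(\mathbb{R},X))$ --- is precisely the inductive scheme found in those references and in Weis's original multiplier paper. The reduction in Step~2, the check of the operator-valued Mikhlin conditions for $\xi'\mapsto T^{(n)}_{\lambda,\xi'}$, and the bookkeeping that maps $\{0,1\}^{n-1}\times\{0,1\}$ onto $\{0,1\}^n$ with constant $\kappa_{\alpha'}+\kappa_{\alpha'+e_n}$ are all sound.

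There is, however, one substantive misstep. You assert that property $(\alpha)$ is not needed in Step~1, even for the parametrized $\RR_p$-bounded version. That is false, and the gap matters because Step~2 consumes exactly that parametrized $\RR_p$-bound. For a \emph{single} one-dimensional symbol $m$, the class-$\HT$ hypothesis alone yields $\|T_m\|\lesssim\kappa_0+\kappa_1$. But the $\RR_p$-boundedness of a \emph{family} $\{T_{m_\lambda}\}_\lambda$ already uses property $(\alpha)$ in dimension one. The obstruction is visible in your own reassembly: to estimate $\sum_i\epsilon_i T_{m_{\lambda_i}}f_i=\sum_{i,j,k}\epsilon_i\, c^{\lambda_i}_{j,k}\,M_{j,k}\widetilde{\Delta}_j\Delta_j f_i$, you introduce an independent Littlewood--Paley randomization $\epsilon'_j$ on the output side and must then pull the coefficients $c^{\lambda_i}_{j,k}$ --- which genuinely depend on \emph{both} randomization indices $i$ and $j$ --- through a doubly randomized sum. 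Kahane's contraction principle does not apply to doubly indexed coefficients; commuting them is precisely the estimate that property $(\alpha)$ (and its operator-valued Kalton--Weis extension) furnishes. Compare \cite[Thm.\ 4.6]{Kunstmann-Weis}, where property $(\alpha)$ is already assumed in $n=1$ for the $\RR$-bounded version. The fix is immediate, since $(\alpha)$ is a hypothesis of the theorem and you invoke it anyway in the iteration; but the ``Main obstacle'' paragraph misattributes its role: property $(\alpha)$ is not a feature that first appears at $n\ge2$ --- it enters as soon as $\RR$-boundedness of the multiplier \emph{family} is demanded, which is the very first step of your induction.
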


\begin{theorem}
Let $X$ be a Banach space of class $\HT$, $1 < p < \infty$ and let $-A: \Def(A) \subset X \rightarrow X$ be the generator of a bounded holomorphic strongly continuous semigroup. Then the following conditions are equivalent.
\begin{enumerate}[(i)]
\item $A$ has maximal $L_p$-regularity on $(0,\infty)$.
\item $A$ is pseudo-$\RR$-sectorial with $\varphi^\RR_A < \frac{\pi}{2}$.
\end{enumerate}
\end{theorem}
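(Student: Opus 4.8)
The plan is to prove the two implications separately, in both cases through the Fourier multiplier operator attached to the inhomogeneous Cauchy problem. Throughout I would take $T=\infty$ and use the representation $u=k*f$ with $k(t)=S(t)$ for $t>0$ and $k(t)=0$ for $t\le 0$, so that formally $\Fourier(k*f)(\xi)=(i\xi+A)^{-1}\Fourier f(\xi)$ and $\Fourier(Au)(\xi)=m(\xi)\Fourier f(\xi)$ with symbol $m(\xi):=A(i\xi+A)^{-1}=I-i\xi(i\xi+A)^{-1}$ for $\xi\in\mathbb R\setminus\{0\}$; here $i\xi+A$ is invertible because $\sigma(A)\subset\overline{\Sigma}_{\varphi_A}$ with $\varphi_A<\tfrac\pi2$.

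For (ii)$\Rightarrow$(i) I would fix an angle $\varphi\in(\varphi^\RR_A,\tfrac\pi2)$, so that $i\mathbb R\setminus\{0\}\subset\Sigma_{\pi-\varphi}$ and hence $\{i\xi(i\xi+A)^{-1}:\xi\neq 0\}$, and therefore $\{m(\xi):\xi\neq 0\}$, is $\RR$-bounded. Differentiation gives $\xi m'(\xi)=\bigl[-i\xi(i\xi+A)^{-1}\bigr]\bigl[I-i\xi(i\xi+A)^{-1}\bigr]$, a product of $\RR$-bounded families and thus $\RR$-bounded, since $\RR$-bounds are stable under sums and products. The one-dimensional operator-valued Fourier multiplier theorem — which for $n=1$ requires only that $X$ be of class $\HT$ (and not property~$(\alpha)$); see Theorem~\ref{thm: Satz von Mikhlin operatorwertig} and its proof, or \cite{Weis-Girardi} — then yields a bounded extension $T_m\in\mathscr L(L_p(\mathbb R,X))$ of $\Fourier^{-1}m\Fourier$. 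Because $\sigma(-A)$ lies in the closed left half-plane, $z\mapsto A(z+A)^{-1}$ is holomorphic and bounded on $\{\operatorname{Re}z>0\}$, so $T_m$ is causal and restricts to a bounded operator on $L_p((0,\infty),X)$; for $f\in\mathscr S(\mathbb R,X)$ supported in $(0,\infty)$ one identifies $T_mf=A(k*f)=Au$, and by density $\|Au\|_{L_p((0,\infty),X)}\le C\|f\|_{L_p((0,\infty),X)}$. Then $u'=f+Au\in L_p((0,\infty),X)$ and $u(t)\in\Def(A)$ a.e., i.e.\ $A\in\mathrm{MR}(X)$.

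For (i)$\Rightarrow$(ii), since $-A$ generates a bounded holomorphic semigroup the characterisation recalled above gives that $A$ is pseudo-sectorial with $\varphi_A<\tfrac\pi2$, so only the $\RR$-boundedness on a sub-$\tfrac\pi2$ sector has to be produced. From $A\in\mathrm{MR}(X)$ the operator $f\mapsto A(k*f)$ is bounded on $L_p((0,\infty),X)$, and causality of $k$ transfers this to boundedness of $T_m$ on $L_p(\mathbb R,X)$. The central step is that a bounded operator-valued Fourier multiplier on $L_p(\mathbb R,X)$ has $\RR$-bounded symbol: for distinct frequencies $\xi_1,\dots,\xi_N$ and $x_1,\dots,x_N\in X$ one tests $T_m$ against $f=\sum_j\epsilon_j e^{i\xi_j\cdot}\phi_\delta(\cdot)x_j$, lets the scale $\delta\downarrow 0$ so that the frequency supports decouple and $T_mf\approx\sum_j\epsilon_j e^{i\xi_j\cdot}\phi_\delta(\cdot)m(\xi_j)x_j$, and applies Kahane's contraction principle (for each fixed $t$) to delete the unimodular factors $e^{i\xi_j t}$ on both sides, arriving at $\RR_p\{m(\xi):\xi\neq 0\}\le C\|T_m\|$. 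Equivalently $\{\lambda(\lambda+A)^{-1}:\lambda\in i\mathbb R\setminus\{0\}\}$ is $\RR$-bounded. Finally one fattens the imaginary axis to a sector: a Neumann series $(\mu+A)^{-1}=\sum_{k\ge 0}(\lambda-\mu)^k(\lambda+A)^{-k-1}$ together with submultiplicativity of the $\RR$-bound widens each imaginary ray to an open sector of positive half-angle around $\pm i\mathbb R_+$, and a vector-valued three-lines (Phragm\'en--Lindel\"of) argument for $\RR$-bounds — using that $\lambda\mapsto\lambda(\lambda+A)^{-1}$ is holomorphic, uniformly bounded on $\{|\arg\lambda|\le\tfrac\pi2-\delta\}$ by pseudo-sectoriality, and $\RR$-bounded on the two bounding rays — fills in the remaining sector around $\mathbb R_+$. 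This yields $\RR$-boundedness of $\{\lambda(\lambda+A)^{-1}:\lambda\in\Sigma_{\pi-\varphi}\}$ for some $\varphi<\tfrac\pi2$, which is (ii).

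The hardest parts will be: in (ii)$\Rightarrow$(i), the causality/transference between line and half-line and the identification $T_mf=Au$ on a dense class; and in (i)$\Rightarrow$(ii), the two soft-analysis steps — extracting $\RR$-boundedness of the symbol from mere boundedness of the multiplier (where the decoupling of frequency supports and Kahane's principle must be made rigorous at Lebesgue points), and propagating $\RR$-boundedness from the imaginary rays into a sector of half-angle exceeding $\tfrac\pi2$. I expect this last point — in particular the $\RR$-valued three-lines step covering the sector around the positive real axis — to be the main obstacle.
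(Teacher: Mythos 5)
The paper does not prove this equivalence; it is quoted as due to (Girardi--)Weis and referred to \cite{Weis-Girardi} and \cite[Thm.~4.4.4]{Pruess-Simonett}, so there is no in-paper proof to compare against. Your reconstruction is a legitimate version of the standard argument. For (ii)$\Rightarrow$(i) the route is correct: the one-dimensional Weis operator-valued multiplier theorem does require only class $\HT$ (property~$(\alpha)$ enters for $n\ge 2$, or to upgrade boundedness of $T_m$ to $\RR$-boundedness of a \emph{family} of multiplier operators, neither of which is needed here), and the causal-extension/identification step is fine. For (i)$\Rightarrow$(ii) you take the Cl\'ement--Pr\"uss route (boundedness of $T_m$ forces $\RR$-boundedness of the symbol $m(\xi)=A(i\xi+A)^{-1}$ on $i\mathbb R\setminus\{0\}$, then push into a sector of half-angle $>\pi/2$). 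Weis's original necessity proof is different: it first extracts $\RR$-boundedness of $\{S(t),\,tAS(t):t>0\}$ by testing the maximal-regularity estimate on step functions, then uses the analytic continuation $S(z)$ on $\Sigma_\delta$ and the rotated Laplace representation $\lambda(\lambda+A)^{-1}=\lambda\int_{e^{i\theta}\mathbb R_+}e^{-\lambda z}S(z)\,dz$ to obtain the sectorial $\RR$-bound directly as an $L^1$-weighted average of an $\RR$-bounded family. Both work; yours is shorter on the extraction side but pushes more work into the sectorial extension. The one point that needs correcting is the phrase ``three-lines (Phragm\'en--Lindel\"of) argument for $\RR$-bounds'': there is no such theorem, since the $\RR$-bound is not a Banach-space norm and log-convexity interpolation does not transfer to it. The lemma you are actually reaching for is that the strong closure of the absolutely convex hull of an $\RR$-bounded set is again $\RR$-bounded (with a factor at most $2$); consequently any operator of the form $\int_\Gamma h(\mu)N(\mu)\,d\mu$ with $\|h\|_{L^1(\Gamma)}\le 1$ and $\{N(\mu):\mu\in\Gamma\}$ $\RR$-bounded is $\RR$-bounded (see \cite{Denk-Hieber-Pruess}). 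After the Neumann-series fattening you represent $\lambda(\lambda+A)^{-1}$ at interior points as a Poisson integral of its values on the two boundary rays you already control; since the Poisson kernel of a half-plane (hence, after a conformal map, of the sector) has $L^1$-norm exactly~$1$, this convex-hull lemma closes the argument without any Phragm\'en--Lindel\"of input.
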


\section{Properties of Triebel-Lizorkin-Lorentz spaces}
\label{secproptll}

For parameters $s \in \mathbb{R}$, $1 < p,q < \infty$, $1 \le r \le \infty$ we call
\begin{equation*}
F^{s,r}_{p,q} := \big\{ u \in \Temp : \| u \|_{F^{s,r}_{p,q}} < \infty \big\}
\end{equation*}
the \emph{Triebel-Lizorkin-Lorentz space} (as defined in~\cite{Yang-Cheng-Peng}). The norm is given by
\begin{equation*}
\| u \|_{F^{s,r}_{p,q}} := \| (\varphi_k * u)_{k \in \mathbb{N}_0} \|_{L_{p,r}(l^s_q)}
\end{equation*}
where $(\widehat{\varphi}_k)_{k \in \mathbb{N}_0}$ is a dyadic decomposition defined as follows (cf.~\cite[Def.\ 2.3.1/2]{Triebel1978}).

\begin{definition}
Let $\boldsymbol{\Phi_N}$ (for $N \in \mathbb{N}$) denote the set of systems of functions $(\varphi_k)_{k \in \mathbb{N}_0} \subset \Schwartz$ with the following properties.
\begin{itemize}
\item $\widehat{\varphi}_k \ge 0$ for all $k \in \mathbb{N}_0$.
\item $\Supp(\widehat{\varphi}_k) \subset \{ 2^{k-N} \le | x | \le 2^{k+N} \}$
for $k \in \mathbb{N}$ and $\Supp(\widehat{\varphi}_0) \subset \{ | x | \le 2^N \}$.
\item There exist $D_1,D_2 > 0$ such that for all $\xi \in \mathbb{R}^n$
\begin{equation} \label{eq: Beschränktheit der Summe}
D_1 \le \sum_{k=0}^\infty \widehat{\varphi}_k(\xi) \le D_2.
\end{equation}
\item For any $\alpha \in \mathbb{N}_0^n$, there is $C_\alpha > 0$ such that for all $k \in \mathbb{N}_0$ and $\xi \in \mathbb{R}^n$
\begin{equation} \label{eq: Mikhlin-Bed fuer dyad. Zerl.}
|\xi|^{|\alpha|} |\partial^\alpha \widehat{\varphi}_k(\xi)| \le C_\alpha.
\end{equation}
\end{itemize}
Additionally, we set $\boldsymbol{\Phi} := \bigcup_{N \in \mathbb{N}} \boldsymbol{\Phi}_N$ and call each family $(\widehat{\varphi}_k)_{k \in \mathbb{N}_0}$ with $(\varphi_k)_{k \in \mathbb{N}_0} \in \boldsymbol{\Phi}$ a \emph{dyadic decomposition}.
\end{definition}


Note that the constant $C_\alpha$ in~\eqref{eq: Mikhlin-Bed fuer dyad. Zerl.} doesn't depend on the index $k$ but on the selected $N \in \mathbb{N}$, that is, on the radius $2^N$ of the dyadic decomposition. Also note that the existence of $D_2$ in~\eqref{eq: Beschränktheit der Summe} can be deduced from~\eqref{eq: Mikhlin-Bed fuer dyad. Zerl.} with $\alpha = 0$ and the properties of $\Supp(\widehat{\varphi}_k)$.
We will often use the following more specific dyadic decomposition.

\begin{beispiel} \label{thm: Bsp dyadische Zerlegung}
Let $\phi \in C^\infty(\mathbb{R}^n)$ be radial symmetric with $\Supp(\phi) \subset \{ |x| \le 1 \}$, $\phi = 1$ on $\{ |x| \le \frac{1}{2} \}$ and $0 \le \phi \le 1$. We set $\widehat{\psi}(\xi) := \phi(\frac{\xi}{2}) - \phi(\xi)$ and $\widehat{\psi}_k(\xi) := \widehat{\psi}(2^{-k} \xi)$ for $\xi \in \mathbb{R}^n$ and $k \in \mathbb{Z}$. Now we set $\varphi_k := \psi_k$ for $k \ge 1$ and define $\varphi_0 \in \Schwartz$ by
\begin{equation*}
\widehat{\varphi}_0(\xi) =
\begin{cases}
\sum_{j \le 0} \widehat{\psi}_j(\xi), & \text{if } \xi \ne 0 \\
1, & \text{if } \xi = 0.
\end{cases}
\end{equation*}
Then we get $(\varphi_k)_{k \in \mathbb{N}_0} \in \boldsymbol{\Phi}_1$ with $\sum_{k \in \mathbb{N}_0} \varphi_k(\xi) = 1$ for all $\xi \in \mathbb{R}^n$. In addition, we have the following (easy to verify) properties:
\begin{enumerate}[(a)]
\item \label{thm: Bsp dyadische Zerlegung 1} $\norm{\varphi_k}_{L_1} = \norm{\psi}_{L_1}$ for all $k \in \mathbb{N}$.
\item \label{thm: Bsp dyadische Zerlegung 2} $\sum_{k=1}^N \widehat{\varphi}_k \xrightarrow{N \to \infty} 1$ locally uniformly on $\mathbb{R}^n$.
\item \label{thm: Bsp dyadische Zerlegung 3} $\sum_{j=0}^N \varphi_j * f \xrightarrow{N \to \infty} f$ in $\Schwartz$ for all $f \in \Schwartz$.
\item \label{thm: Bsp dyadische Zerlegung 4} $\sum_{j=0}^N \varphi_j * u \xrightarrow{N \to \infty} u$ in $\Temp$ for all $u \in \Temp$.
\end{enumerate}
\end{beispiel}

If we replace the Lorentz-norm $\| \cdot \|_{L_{p,r}(l^s_q)}$ by $\| \cdot \|_{L_p(l^s_q)}$ then we get the well-known Triebel-Lizorkin spaces $F^s_{p,q}$. More precisely we have $F^{s,p}_{p,q} = F^s_{p,q}$.
One can find the following result as Remark 2.4.2/1 in~\cite{Triebel1978}.

\begin{satz}
The Triebel-Lizorkin-Lorentz spaces are independent of the choice of the dyadic decomposition.
\end{satz}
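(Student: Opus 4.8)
The plan is to show that two dyadic decompositions $(\varphi_k)_{k\in\mathbb N_0}$ and $(\tilde\varphi_k)_{k\in\mathbb N_0}$, say with radii $2^{N}$ and $2^{\tilde N}$, yield equivalent norms. Fix $M\ge N+\tilde N$ and for each $k$ write
\begin{equation*}
\tilde\varphi_k * u = \sum_{|j-k|\le M} \tilde\varphi_k * \varphi_j * u + \sum_{|j-k|>M} \tilde\varphi_k * \varphi_j * u .
\end{equation*}
By the support condition on the Fourier side, $\widehat{\tilde\varphi}_k\,\widehat{\varphi}_j=0$ whenever $|j-k|>M$ (the annuli $\{2^{k-\tilde N}\le|\xi|\le 2^{k+\tilde N}\}$ and $\{2^{j-N}\le|\xi|\le 2^{j+N}\}$ are disjoint), so the second sum vanishes. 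Hence it suffices to control, for each of the finitely many shifts $\ell$ with $|\ell|\le M$, the map $(v_k)_k\mapsto(\tilde\varphi_k * \varphi_{k+\ell} * v_k)_k$ on $L_{p,r}(l^s_q)$, and then sum over $\ell$ (the factor $2^{s|\ell|}$ produced by the weight $l^s_q$ is harmless since there are only finitely many $\ell$). Symmetry in $(\varphi_k)$ and $(\tilde\varphi_k)$ then gives the reverse inequality and hence equivalence.

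The heart of the matter is therefore a vector-valued maximal-function / multiplier estimate: for a fixed shift $\ell$, the operator $T_\ell$ acting sequence-wise by convolution with $\tilde\varphi_k*\varphi_{k+\ell}$ is bounded on $L_{p,r}(l^s_q)$ with a norm bounded uniformly in $N,\tilde N$ (or at least by a constant depending only on $N,\tilde N$, which is all that is needed). I would obtain this by viewing $T_\ell$ as $\Fourier^{-1}m_\ell\Fourier$ for the $\mathscr L(l^s_q)$-valued symbol $m_\ell(\xi)=\bigl(\delta_{jk}\,\widehat{\tilde\varphi}_k(\xi)\widehat{\varphi}_{k+\ell}(\xi)\bigr)_{j,k}$, i.e. the diagonal multiplication operator on $l^s_q$ with entries $\widehat{\tilde\varphi}_k\widehat{\varphi}_{k+\ell}$. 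Each such entry, together with its $\xi$-derivatives, satisfies a Mikhlin bound $|\xi|^{|\alpha|}|\partial^\alpha(\widehat{\tilde\varphi}_k\widehat{\varphi}_{k+\ell})(\xi)|\le C_\alpha$ with $C_\alpha$ independent of $k$ — this follows from the Leibniz rule and the Mikhlin condition~\eqref{eq: Mikhlin-Bed fuer dyad. Zerl.} assumed in the definition of a dyadic decomposition. Since the diagonal entries are supported in disjoint-in-$k$-blocks of annuli, one checks that the $\mathscr L(l^s_q)$-operator norms of $\xi^\alpha\partial^\alpha m_\ell(\xi)$ are bounded uniformly in $\xi$; because $l^s_q$ is a UMD space (isomorphic to a weighted $\ell^q$, hence of class $\HT$ with $\Propa$), the operator-valued Mikhlin theorem (Theorem~\ref{thm: Satz von Mikhlin operatorwertig}) applies and gives boundedness of $T_\ell$ on $L_p(\mathbb R^n,l^s_q)$.

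To pass from $L_p(l^s_q)$ to the Lorentz-type space $L_{p,r}(l^s_q)$, I would invoke real interpolation: by the identity $\bigl(L_{p_0}(l^s_q),L_{p_1}(l^s_q)\bigr)_{\theta,r}=L_{p,r}(l^s_q)$ (the vector-valued version of the Lorentz interpolation formula recalled in Section~\ref{sec: Basic}, applied with the Banach space $X=l^s_q$), boundedness of $T_\ell$ on $L_{p_0}(l^s_q)$ and $L_{p_1}(l^s_q)$ for two exponents $p_0<p<p_1$ straddling $p$ yields boundedness on $L_{p,r}(l^s_q)$ for every $r\in[1,\infty]$ by the interpolation property of bounded operators. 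Since $T_\ell$ is defined by the same convolution kernels for all exponents, the $L_p(l^s_q)$-bound above holds for the full range $1<p<\infty$, so such $p_0,p_1$ are available; alternatively, one may simply note that the operator-valued Mikhlin theorem already produces an $\RR$-bounded family, which interpolates as well.

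The main obstacle I anticipate is the bookkeeping needed to make the uniform Mikhlin estimate on the $\mathscr L(l^s_q)$-valued symbol genuinely uniform: one must verify that differentiating the product $\widehat{\tilde\varphi}_k\widehat{\varphi}_{k+\ell}$ and multiplying by $\xi^\alpha$ produces constants independent of $k$ (this uses crucially that the bound in~\eqref{eq: Mikhlin-Bed fuer dyad. Zerl.} is $k$-independent, depending only on $N$), and that the resulting diagonal operators on $l^s_q$, whose entries live on overlapping dyadic annuli, have operator norm controlled by the supremum of the individual entry-bounds — a point where one uses that each $\xi$ meets only boundedly many (namely at most $2(N+\tilde N)+1$) of the supports. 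Once this is in place, summing the finitely many shifts $\ell$ and appealing to symmetry closes the argument; one should also remark that the equivalence constant depends only on $N$ and $\tilde N$, which is enough for the statement since any two dyadic decompositions have some finite radii.
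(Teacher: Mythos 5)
The paper supplies no proof here; it simply cites Remark~2.4.2/1 in Triebel's 1978 monograph. What you give is therefore a self-contained alternative, and it is very much in the spirit of the paper's own proof of the multiplier theorem (Theorem~\ref{thm: Multiplier theorem}): reduce to a diagonal operator-valued Fourier multiplier on $L_p(l^s_q)$, verify its Mikhlin bound using~\eqref{eq: Mikhlin-Bed fuer dyad. Zerl.}, note that $l^s_q$ is of class $\HT$ with $\Propa$, and transfer from $L_p(l^s_q)$ to $L_{p,r}(l^s_q)$ via the real interpolation identity for vector-valued Lorentz spaces. The classical argument behind Triebel's remark goes instead through the vector-valued Fefferman--Stein / Peetre maximal inequality (followed by interpolation for the Lorentz exponent); your route substitutes the operator-valued Mikhlin theorem for the maximal function, which is more uniform with the rest of this paper and yields cleanly that the equivalence constant depends only on $N,\tilde N$.

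Two points need tightening. First, the operator you propose to bound, $(v_k)_k\mapsto(\tilde\varphi_k*\varphi_{k+\ell}*v_k)_k$, is not the map that takes $(\varphi_j*u)_j$ to the summands of $(\tilde\varphi_k*u)_k$: feeding in $v_k=\varphi_k*u$ introduces a spurious factor $\widehat{\varphi}_k$, while $v_k\equiv u$ is not an admissible element of $L_{p,r}(l^s_q)$. The correct decomposition is
\begin{equation*}
\tilde\varphi_k*u
=\sum_{|\ell|\le M}\Fourier^{-1}\widehat{\tilde\varphi}_k\Fourier\,(\varphi_{k+\ell}*u),
\end{equation*}
so for each $\ell$ one composes the index shift $(w_j)_j\mapsto(w_{j+\ell})_j$ (bounded on $l^s_q$ with norm $\sim 2^{|s\ell|}$ -- this is where the factor you mention actually comes from) with the diagonal Fourier multiplier $m(\xi)=\operatorname{diag}(\widehat{\tilde\varphi}_k(\xi))$, whose Mikhlin bounds are exactly~\eqref{eq: Mikhlin-Bed fuer dyad. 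Zerl.} for $(\tilde\varphi_k)$; the product $\widehat{\tilde\varphi}_k\widehat{\varphi}_{k+\ell}$ never actually appears as a multiplier symbol. Second, Theorem~\ref{thm: Satz von Mikhlin operatorwertig} requires $\RR$-boundedness of $\{\xi^\alpha\partial^\alpha m(\xi)\}$, not merely uniform boundedness of the $\mathscr{L}(l^s_q)$-operator norms; for a diagonal symbol with uniformly bounded scalar entries this follows from Kahane's contraction principle, exactly as in the paper's proof of Theorem~\ref{thm: Multiplier theorem}, and you should say so explicitly. With these two repairs the argument is correct.
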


The following result is due to Yang, Cheng 
and Peng \cite{Yang-Cheng-Peng}. Their proof is based on wavelet
theory. We notice that it is possible to derive the following
interpolation property by $L^p$-interpolation and 
retraction and coretraction techniques
as developed in \cite{Triebel1978}, as well.

\begin{theorem} \label{thm: Interpolation von TLL-Räumen}
For $s \in \mathbb{R}$, $1 < p_0, p_1, q < \infty$, $1 \le r_0, r_1, r \le \infty$,
$p_0 \ne p_1$ and $0 < \theta < 1$ such that $\frac{1}{p} = \frac{1-\theta}{p_0} + \frac{\theta}{p_1}$
we have
\begin{equation*}
\big( F^{s,r_0}_{p_0,q} , F^{s,r_1}_{p_1,q} \big)_{\theta,r} = F^{s,r}_{p,q}.
\end{equation*}
\end{theorem}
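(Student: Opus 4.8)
The plan is to reduce the statement to the corresponding interpolation identity for vector-valued Lorentz spaces via a retraction--coretraction argument. First I would fix a dyadic decomposition $(\varphi_k)_{k\in\mathbb N_0}\in\boldsymbol\Phi$ as in Example~\ref{thm: Bsp dyadische Zerlegung} and, as usual in Triebel's approach, introduce an auxiliary family $(\widetilde\varphi_k)_{k\in\mathbb N_0}$ with $\widehat{\widetilde\varphi}_k=1$ on $\Supp(\widehat\varphi_k)$ and $\widehat{\widetilde\varphi}_k$ still supported in a dyadic annulus (so $\widetilde\varphi_k*\varphi_k*u=\varphi_k*u$). Then the coretraction $S\colon u\mapsto(\varphi_k*u)_{k\in\mathbb N_0}$ maps $F^{s,r}_{p,q}$ isometrically into $L_{p,r}(l^s_q)$ by definition of the norm, and the retraction $R\colon (f_k)_{k\in\mathbb N_0}\mapsto\sum_k\widetilde\varphi_k*f_k$ maps $L_{p,r}(l^s_q)$ boundedly onto $F^{s,r}_{p,q}$ with $R\circ S=\mathrm{id}$. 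The key point is that $R$ and $S$ are the \emph{same} operators for all choices of the parameters $p$ and $r$ (only $s,q$ being fixed), so they are simultaneously bounded on the two endpoint spaces of the interpolation couple.

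Second, granted these two facts, the abstract retraction--coretraction theorem for the real interpolation functor (see \cite[Sec.\ 1.2.4]{Triebel1978}) gives
\begin{equation*}
\big(F^{s,r_0}_{p_0,q},F^{s,r_1}_{p_1,q}\big)_{\theta,r}
= R\Big(\big(L_{p_0,r_0}(l^s_q),L_{p_1,r_1}(l^s_q)\big)_{\theta,r}\Big),
\end{equation*}
with $F^{s,r}_{p,q}$ appearing as a complemented subspace of the interpolated sequence space, the complement being induced by $P:=S\circ R$, which is a bounded projection on all the spaces involved. So it remains to compute $\big(L_{p_0,r_0}(l^s_q),L_{p_1,r_1}(l^s_q)\big)_{\theta,r}$.

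Third, I would identify this interpolation space with $L_{p,r}(l^s_q)$. Since $l^s_q$ is a fixed Banach space $X$ independent of the interpolation parameters, this is exactly the vector-valued Lorentz interpolation identity
\begin{equation*}
\big(L_{p_0,r_0}(X),L_{p_1,r_1}(X)\big)_{\theta,r}=L_{p,r}(X),\qquad
\tfrac1p=\tfrac{1-\theta}{p_0}+\tfrac{\theta}{p_1},
\end{equation*}
recalled in Section~\ref{sec: Basic} (from \cite[Rem.\ 1.18.6/4]{Triebel1978}), applied here over the measure space $(\mathbb R^n,dx)$ with values in $X=l^s_q$. Chaining the two identities and applying $R$ then yields $\big(F^{s,r_0}_{p_0,q},F^{s,r_1}_{p_1,q}\big)_{\theta,r}=F^{s,r}_{p,q}$, with equivalence of norms; the inclusion $\supset$ comes from $S$ being bounded into the interpolated couple, the inclusion $\subset$ from $R$ being bounded out of it, and $R\circ S=\mathrm{id}$ closes the loop.

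The main obstacle is verifying that the retraction $R$ is bounded $L_{p,r}(l^s_q)\to F^{s,r}_{p,q}$, i.e.\ the vector-valued, Lorentz-scale analogue of the standard estimate $\|\sum_k\widetilde\varphi_k*f_k\|_{F^s_{p,q}}\lesssim\|(f_k)\|_{L_p(l^s_q)}$. In the $L_p$ setting this rests on the Fefferman--Stein maximal inequality for $l^s_q$-valued functions together with the almost-orthogonality of the $\widetilde\varphi_k$; to carry it over to $L_{p,r}$ one needs the boundedness of the Hardy--Littlewood maximal operator (vector-valued, iterated) on the Lorentz spaces $L_{p,r}$, which holds for $1<p<\infty$, $1\le r\le\infty$. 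Since the paper elsewhere establishes a Mikhlin-type multiplier result for the $F^{s,r}_{p,q}$-scale, one may alternatively route this boundedness through that multiplier theorem; either way this maximal-function/multiplier estimate on the Lorentz scale is the technical heart of the argument, while the abstract interpolation bookkeeping is routine.
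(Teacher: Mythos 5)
The paper does not actually prove this theorem: it cites Yang--Cheng--Peng~\cite{Yang-Cheng-Peng}, whose argument is based on wavelet theory, and merely remarks in passing that the result could also be obtained ``by $L^p$-interpolation and retraction and coretraction techniques as developed in \cite{Triebel1978}.'' Your proposal is a fleshed-out version of that alternative route, and its skeleton --- coretraction $S\colon u\mapsto(\varphi_k*u)_k$, retraction $R\colon(f_k)_k\mapsto\sum_k\widetilde\varphi_k*f_k$, reduction to $\bigl(L_{p_0,r_0}(l^s_q),L_{p_1,r_1}(l^s_q)\bigr)_{\theta,r}=L_{p,r}(l^s_q)$ --- is the right idea.

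Two points need repair. First, your fallback suggestion to ``route this boundedness through'' the paper's Mikhlin multiplier theorem (Theorem~\ref{thm: Multiplier theorem}) is circular: the proof of that theorem in the paper explicitly uses the interpolation identity $F^{s,r}_{p,q}=(F^s_{p_0,q},F^s_{p_1,q})_{\theta,r}$ that you are trying to establish, so it is not available at this stage. Second, you have made the ``main obstacle'' harder than it needs to be. There is no need to prove the retraction bound $R\colon L_{p_j,r_j}(l^s_q)\to F^{s,r_j}_{p_j,q}$ on the Lorentz scale. Run the retraction--coretraction argument first only with Lebesgue endpoints $r_j=p_j$: there the bound $R\colon L_{p_j}(l^s_q)\to F^s_{p_j,q}$ is the classical Fefferman--Stein-based estimate from~\cite[Sec.\ 2.3.2]{Triebel1978}, and combined with $(L_{p_0}(l^s_q),L_{p_1}(l^s_q))_{\theta,r}=L_{p,r}(l^s_q)$ this already yields $(F^s_{p_0,q},F^s_{p_1,q})_{\theta,r}=F^{s,r}_{p,q}$ for all $1\le r\le\infty$. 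The general version with arbitrary $r_0,r_1$ then follows from the Lions--Peetre reiteration theorem: choose $1<a_0\neq a_1<\infty$ so that $p_0,p_1$ both lie strictly between them; by the case just proved each $F^{s,r_j}_{p_j,q}$ is $(F^s_{a_0,q},F^s_{a_1,q})_{\sigma_j,r_j}$ with $\sigma_0\neq\sigma_1$, and reiteration gives $(F^{s,r_0}_{p_0,q},F^{s,r_1}_{p_1,q})_{\theta,r}=(F^s_{a_0,q},F^s_{a_1,q})_{(1-\theta)\sigma_0+\theta\sigma_1,r}=F^{s,r}_{p,q}$. This avoids any new maximal-function estimate on the Lorentz scale and is what the paper's phrase ``$L^p$-interpolation and retraction and coretraction'' is pointing at.
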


\begin{lemma} \label{thm: Einbettungen}
Let $s \in \mathbb{R}$, $1 < p,q < \infty$ and $1 \le r \le \infty$. Then we have the following continuous embeddings.
\begin{enumerate}[(i)]
\item \label{thm: Einbettungen 1} $\Schwartz \subset F^{s,r}_{p,q} \subset \Temp$
and the first embedding is dense.
\item \label{thm: Einbettungen 2} $F^{s + \tau,r}_{p,q} \subset F^{s,r}_{p,q}$ for $\tau \ge 0$.
\item \label{thm: Einbettungen 3} $F^{s,r}_{p,q} \subset L_{p,r}$ if $s > 0$.
\end{enumerate}
\end{lemma}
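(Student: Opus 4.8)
\textbf{Proof plan for Lemma~\ref{thm: Einbettungen}.}

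The plan is to reduce everything to the corresponding (by now classical) embedding statements for Triebel--Lizorkin spaces $F^s_{p,q}$ together with the real-interpolation description of $F^{s,r}_{p,q}$ from Theorem~\ref{thm: Interpolation von TLL-Räumen}, plus elementary monotonicity properties of the Lorentz quasinorm $\vvvert\cdot\vvvert_{L_{p,r}(l^s_q)}$. For part~\eqref{thm: Einbettungen 2}: fix a dyadic decomposition $(\varphi_k)$ and observe that for $\tau\ge 0$ one has the pointwise inequality $\|(\varphi_k*u)_k\|_{l^s_q}\le \|(\varphi_k*u)_k\|_{l^{s+\tau}_q}$, simply because $2^{-ks}\le 2^{-k(s+\tau)}$ is false — so instead one uses $2^{-k(s+\tau)}=2^{-k\tau}2^{-ks}\le 2^{-ks}$ for $k\ge 0$, giving $\|\cdot\|_{l^{s+\tau}_q}\le\|\cdot\|_{l^s_q}$ hence $\|u\|_{F^{s,r}_{p,q}}\lesssim\|u\|_{F^{s+\tau,r}_{p,q}}$ after applying the monotone Lorentz functional. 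This is immediate and requires no interpolation.

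For part~\eqref{thm: Einbettungen 3}, the claim $F^{s,r}_{p,q}\subset L_{p,r}$ for $s>0$: first I would recall the known embedding $F^s_{p,q}\subset L_p$ (equivalently $F^s_{p,q}=F^{s,p}_{p,q}\subset L_{p,p}=L_p$) valid for $s>0$, $1<p,q<\infty$, which is standard (e.g.\ \cite{Triebel1978}); then promote it to the Lorentz scale by real interpolation. Concretely, pick $p_0<p<p_1$ with $s>0$ still admissible, so $F^s_{p_i,q}\subset L_{p_i}$ for $i=0,1$; choose $\theta$ with $\frac1p=\frac{1-\theta}{p_0}+\frac{\theta}{p_1}$. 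Applying the real interpolation functor $(\cdot,\cdot)_{\theta,r}$ to the two embeddings, using Theorem~\ref{thm: Interpolation von TLL-Räumen} on the left and the interpolation identity $\bigl(L_{p_0},L_{p_1}\bigr)_{\theta,r}=L_{p,r}$ recalled in Section~\ref{sec: Basic} on the right, yields $F^{s,r}_{p,q}=\bigl(F^s_{p_0,q},F^s_{p_1,q}\bigr)_{\theta,r}\subset\bigl(L_{p_0},L_{p_1}\bigr)_{\theta,r}=L_{p,r}$, with norm control by interpolation of bounded operators. One must check the admissible parameter ranges ($1<p_0,p_1,q<\infty$, $p_0\ne p_1$, $s>0$) are met, which is easy since $s>0$ is preserved and $p_0,p_1$ can be chosen strictly on either side of $p$ inside $(1,\infty)$.

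For part~\eqref{thm: Einbettungen 1}: the inclusion $\Schwartz\subset F^{s,r}_{p,q}$ follows by estimating $\|(\varphi_k*u)_k\|_{L_{p,r}(l^s_q)}$ for $u\in\Schwartz$ — since each $\varphi_k*u$ is again Schwartz with rapidly decaying seminorms in $k$ (the frequency support of $\varphi_k*u$ lives in a dyadic annulus, so $\|\varphi_k*u\|_{L_{p,r}}$ decays faster than any power of $2^k$, using $\|\varphi_k\|_{L_1}\lesssim 1$ and Young together with the polynomial frequency-localisation bound), the $l^s_q$-weighted sum converges and the Lorentz norm is finite. The inclusion $F^{s,r}_{p,q}\subset\Temp$ is contained in the very definition of the space (it consists of tempered distributions) but one should check the embedding is \emph{continuous}, i.e.\ that $\|u\|_{F^{s,r}_{p,q}}\to 0$ forces $\langle u,g\rangle\to 0$ for each $g\in\Schwartz$; this follows from the reconstruction $u=\sum_k\varphi_k*u$ in $\Temp$ (Example~\ref{thm: Bsp dyadische Zerlegung}\eqref{thm: Bsp dyadische Zerlegung 4}) and pairing termwise, dominating by $\|u\|_{F^{s,r}_{p,q}}$ times a fixed seminorm of $g$. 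Density of $\Schwartz$ in $F^{s,r}_{p,q}$ is the only genuinely delicate point, and it is where I expect the main work: the standard argument is to approximate $u$ first by the partial sums $\sum_{k\le N}\varphi_k*u$ (which lie in a space of band-limited, hence smooth, functions) and then mollify/truncate; the Lorentz outer norm $L_{p,r}$ with $r<\infty$ is order-continuous, which makes the tail $\|(\varphi_k*u)_{k>N}\|_{L_{p,r}(l^s_q)}\to 0$; for $r=\infty$ density typically fails, so one restricts to $r<\infty$ or argues via the reflexivity/duality and the $L^p$-theory of $F^s_{p,q}$ transported along interpolation. I would present the $r<\infty$ case in detail using order-continuity of $L_{p,r}$ and refer to \cite{Yang-Cheng-Peng,Triebel1978} for the band-limited approximation step, since the latter is entirely parallel to the $L^p$-based Triebel--Lizorkin case.
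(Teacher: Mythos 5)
Your plan is sound and, in parts, takes a genuinely different route from the paper. For \eqref{thm: Einbettungen 2} both you and the paper reduce to monotonicity of the $l^s_q$ norm in $s$ (one line in the paper: \emph{``(ii) is a consequence of $l^{s+\tau}_q\subset l^s_q$''}). Your exposition of this step is internally inconsistent: you first assert $\|\cdot\|_{l^s_q}\le\|\cdot\|_{l^{s+\tau}_q}$, then derive the reverse inequality $\|\cdot\|_{l^{s+\tau}_q}\le\|\cdot\|_{l^s_q}$ from $2^{-k(s+\tau)}\le 2^{-ks}$, and yet conclude $\|u\|_{F^{s,r}_{p,q}}\lesssim\|u\|_{F^{s+\tau,r}_{p,q}}$ — only the first assertion is consistent with the lemma as stated. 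The confusion traces back to the paper's written definition of $\|a\|_{l^s_q(X)}$ carrying the weight $2^{-ks}$, which must be a typo for the standard $2^{ks}$ (otherwise \eqref{thm: Einbettungen 2} would be reversed); once the convention is fixed your intended argument coincides with the paper's.

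For \eqref{thm: Einbettungen 1} and \eqref{thm: Einbettungen 3} you essentially swap the paper's strategy. The paper treats \eqref{thm: Einbettungen 1} by the interpolation sandwich
$F^s_{p_0,q}\cap F^s_{p_1,q}\subset F^{s,r}_{p,q}\subset F^s_{p_0,q}+F^s_{p_1,q}$,
which transports both inclusions and density from the classical Triebel--Lizorkin case, and treats \eqref{thm: Einbettungen 3} by a direct Hölder estimate on the dyadic pieces, namely
$\|\sum_k|\varphi_k*u|\|_{L_{p,r}}\lesssim\|(2^{-sk})_k\|_{l_{q'}}\,\|u\|_{F^{s,r}_{p,q}}$,
followed by the reconstruction $u=\sum_k\varphi_k*u$. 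You propose direct Schwartz-seminorm estimates for \eqref{thm: Einbettungen 1} and an interpolation of the classical embedding $F^s_{p,q}\subset L_p$ ($s>0$) for \eqref{thm: Einbettungen 3}. Both routes are valid. Your interpolation proof of \eqref{thm: Einbettungen 3} is clean and short; the paper's direct estimate has the bonus of showing that the reconstruction series converges pointwise a.e., identifying $u$ as a measurable function and not just an abstract element of an interpolation space. For \eqref{thm: Einbettungen 1} your direct approach works but is heavier than the paper's two-line interpolation reduction. Your observation that density of $\Schwartz$ in $F^{s,r}_{p,q}$ is delicate for $r=\infty$ is well taken: the paper's density argument rests on density of the intersection $X_0\cap X_1$ in the real interpolation space $(X_0,X_1)_{\theta,r}$, which holds for $r<\infty$ but fails in general for $r=\infty$; strictly speaking the paper's statement should carry the same caveat you raise.
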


\begin{proof}
\eqref{thm: Einbettungen 1} follows from the corresponding fact for Triebel-Lizorkin spaces since
\begin{equation*}
\Schwartz \subset F^s_{p_0,q} \cap F^s_{p_1,q} \subset F^{s,r}_{p,q} \subset F^s_{p_0,q} + F^s_{p_1,q} \subset \Temp
\end{equation*}
and since the intersection of an interpolation couple of Banach spaces is dense in their real interpolation space. \eqref{thm: Einbettungen 2} is a consequence of $l^{s + \tau}_q \subset l^s_q$. 

In order to prove~\eqref{thm: Einbettungen 3} we use the dyadic decomposition $(\widehat{\varphi}_k)_{k \in \mathbb{N}_0}$ of Example~\ref{thm: Bsp dyadische Zerlegung}. First we consider the estimate
\begin{equation} \label{eq: Abschätzung Reihe}
\Big\| \sum_{k=0}^\infty |\varphi_k * u| \Big\|_{L_{p,r}}
\le C \Big\| \Big( \frac{1}{2^{sk}} \Big)_{k \in \mathbb{N}_0} \Big\|_{l_{q'}} \| u \|_{F^{s,r}_{p,q}}
\end{equation}
that we get from Hölder's inequality with $\frac{1}{q'} + \frac{1}{q} = 1$. Since $s > 0$, the right-hand side is finite for $u \in F^{s,r}_{p,q}$. Applying Example \ref{thm: Bsp dyadische Zerlegung}~\eqref{thm: Bsp dyadische Zerlegung 4} we have $u = \sum_{k=0}^\infty \varphi_k * u$ where the convergence is in $\Temp$. Now~\eqref{eq: Abschätzung Reihe} gives that the series even converges pointwise a.e.\ and thus $u$ is a measurable function. On the other hand~\eqref{eq: Abschätzung Reihe} gives $\| u \|_{L_{p,r}} \le C' \| u \|_{F^{s,r}_{p,q}}$.
\end{proof}

\begin{satz} \label{thm: F^(s,r)_(p,q) UMD-Raum}
$F^{s,r}_{p,q}$ is of class $\HT$ for $s \in \mathbb{R}$ and $1 < p,q,r < \infty$.
\end{satz}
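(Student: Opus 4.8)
The plan is to reduce the $\HT$ property of $F^{s,r}_{p,q}$ to that of a weighted/iterated Lorentz-valued Lebesgue space, exploiting the fact that $\HT$ is stable under the operations used to build $F^{s,r}_{p,q}$ from simpler spaces. Concretely, I would proceed as follows. First, recall that a Banach space is of class $\HT$ if and only if it is UMD, and that the UMD property is inherited by closed subspaces, by spaces isomorphic to a given UMD space, and — crucially — by retracts: if $Y$ is UMD and $X$ is a retract of $Y$ (there exist bounded linear $\iota\colon X\to Y$, $\pi\colon Y\to X$ with $\pi\iota=\mathrm{id}_X$), then $X$ is UMD. The standard dyadic-decomposition machinery realizes $F^{s,r}_{p,q}$ as a retract of $L_{p,r}\bigl(\mathbb{R}^n, l^s_q\bigr)$ via the Peetre-type maps $\iota u = (\varphi_k * u)_{k}$ and $\pi(f_k)_k = \sum_k \tilde\varphi_k * f_k$, where $(\tilde\varphi_k)$ is a second dyadic system satisfying $\sum_k \widehat{\tilde\varphi}_k \widehat\varphi_k = 1$ on the relevant frequency supports; boundedness of $\iota$ is the definition of the norm, and boundedness of $\pi$ follows from the vector-valued Mikhlin theorem (Theorem~\ref{thm: Satz von Mikhlin operatorwertig}) applied with symbol $\xi \mapsto (\widehat{\tilde\varphi}_k(\xi)\,\cdot)_k$, or alternatively from the known corresponding statement for classical Triebel-Lizorkin spaces together with the interpolation identity of Theorem~\ref{thm: Interpolation von TLL-Räumen}. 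Hence it suffices to show that $L_{p,r}(\mathbb{R}^n, l^s_q)$ is of class $\HT$.

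For that reduction I would argue in two further steps. Since $l^s_q$ is isometrically isomorphic to $l_q$ (via the diagonal rescaling $(a_k)\mapsto(2^{-ks}a_k)$) and $l_q$ with $1<q<\infty$ is a classical UMD space, $l^s_q$ is UMD. It then remains to see that $L_{p,r}(\Omega, X)$ is UMD whenever $X$ is UMD and $1<p,r<\infty$. This is where I expect the only real work to lie: for the ordinary Bochner space $L_p(\Omega,X)$ the UMD property of $L_p(\Omega,X)$ given UMD of $X$ is classical (Fubini reduces the Hilbert transform on $L_p(\mathbb{R}, L_p(\Omega,X))$ to the Hilbert transform on $L_p(\mathbb{R},X)$ applied for a.e.\ $\omega\in\Omega$), but the Lorentz space $L_{p,r}$ is not a Bochner space and Fubini is not directly available. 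The clean way around this is the real-interpolation description recorded in the preliminaries: for suitable $1<p_0<p<p_1<\infty$ and $\theta\in(0,1)$ with $\tfrac1p=\tfrac{1-\theta}{p_0}+\tfrac{\theta}{p_1}$ one has $L_{p,r}(\Omega,X) = \bigl(L_{p_0}(\Omega,X),\,L_{p_1}(\Omega,X)\bigr)_{\theta,r}$, and the UMD property is stable under real interpolation between two UMD spaces (equivalently, under passing to a space that is a retract of $\ell^r$ of a UMD space, which real interpolation spaces always are). Since both endpoints $L_{p_i}(\Omega,X)$ are UMD by the Bochner case, $L_{p,r}(\Omega,X)$ is UMD.

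Putting the pieces together: $l^s_q$ is UMD $\Rightarrow$ $L_{p,r}(\mathbb{R}^n,l^s_q)$ is UMD $\Rightarrow$ its retract $F^{s,r}_{p,q}$ is UMD, i.e.\ of class $\HT$. The main obstacle, as indicated, is the passage from $L_p$ to the genuinely non-Bochner Lorentz space $L_{p,r}$; handling it via the real-interpolation identity sidesteps a direct (and more delicate) argument with decreasing rearrangements and keeps the proof short. A fully self-contained alternative, should one wish to avoid invoking interpolation stability of UMD, would be to verify the Hilbert-transform bound on $L_p\bigl(\mathbb{R}, L_{p,r}(\Omega,X)\bigr)$ directly: bound $\|Hf\|$ pointwise in the rearrangement variable is not available, but one can instead interpolate the operator $\mathrm{id}_{L_p(\mathbb{R})}\otimes H$ between the two scales $L_p(\mathbb{R},L_{p_i}(\Omega,X))$, on each of which it is bounded by the Bochner case; the resulting bound on the real-interpolation space is exactly boundedness of the Hilbert transform on $L_p(\mathbb{R}, L_{p,r}(\Omega,X))$.
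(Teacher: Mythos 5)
Your argument is correct but reorganizes the paper's two reductions in the opposite order. The paper first establishes $\HT$ for the classical Triebel--Lizorkin space $F^s_{p,q}$ — as a retract of $L_p(\mathbb{R}^n,l^s_q)$, which is $\HT$ because $L_q(\mathbb{R},l^s_q)$ is $\HT$ by Tonelli and then $L_p(\mathbb{R}^n,l^s_q)$ for arbitrary $p$ via the Bochner--Fubini argument — and only afterwards interpolates, combining Theorem~\ref{thm: Interpolation von TLL-Räumen} with the identity $L_r(\mathbb{R},(X_0,X_1)_{\theta,r}) = (L_r(\mathbb{R},X_0),L_r(\mathbb{R},X_1))_{\theta,r}$ to transfer $\HT$ from the endpoints $F^s_{p_i,q}$ to $F^{s,r}_{p,q}$. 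You instead interpolate on the Lorentz side first, getting $\HT$ for $L_{p,r}(\mathbb{R}^n,l^s_q)$ from the Bochner endpoints $L_{p_i}(\mathbb{R}^n,l^s_q)$, and then present $F^{s,r}_{p,q}$ as a retract of $L_{p,r}(\mathbb{R}^n,l^s_q)$. Both routes rest on the same three permanence facts — $l^s_q$ is $\HT$, $\HT$ passes to retracts, and $\HT$ is stable under real interpolation — so the difference is mainly one of bookkeeping. The paper's ordering is slightly more economical, since the retraction property of $F^s_{p,q}$ onto $L_p(\mathbb{R}^n,l^s_q)$ is classical and can simply be cited, whereas your ordering needs the analogous retraction at the $L_{p,r}$ level; you handle that correctly via interpolation of the (parameter-independent) retraction/coretraction maps, but your first suggested shortcut — applying Theorem~\ref{thm: Satz von Mikhlin operatorwertig} directly — does not by itself yield $L_{p,r}$-bounds, as that theorem is formulated on Bochner $L_p$-spaces; it is precisely the interpolation alternative you then mention that closes that gap.
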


\begin{proof}
We need to show that the Hilbert transform
\begin{equation*}
H: \mathscr{S}(\mathbb{R},F^{s,r}_{p,q}) \longrightarrow \mathcal{M}(\mathbb{R},F^{s,r}_{p,q}), \quad
Hf(t) = \lim_{\epsilon \searrow 0} \int_{|s| > \epsilon} \frac{f(t-s)}{s} ds
\end{equation*}
has an extension $H \in \mathscr{L}(L_p(\mathbb{R},F^{s,r}_{p,q}))$.
For any $s \in \mathbb{R}$ and $1 < q < \infty$, Tonelli's theorem implies that $L_q(\mathbb{R},l^s_q)$ is a space of class $\HT$ and so is $L_p(\mathbb{R},l^s_q)$ for arbitrary $1 < p < \infty$. Since the Triebel-Lizorkin space $F^s_{p,q}$ is a retrakt of $L_p(\mathbb{R},l^s_q)$ we can transfer the $\HT$-Property to $F^s_{p,q}$ for any $s \in \mathbb{R}$ and $1 < p,q < \infty$.

Now for fixed parameters $s,p,q,r$ as in the assertion we can use Theorem~\ref{thm: Interpolation von TLL-Räumen} to complete the proof. As a direct conclusion of the interpolation property
$L_r(\mathbb{R},(X_0,X_1)_{\theta,r}) = (L_r(\mathbb{R},X_0),L_r(\mathbb{R},X_1))_{\theta,r}$
we get that for an interpolation couple of spaces of class $\HT$ $X_0,X_1$ the real interpolation space $(X_0,X_1)_{\theta,r}$ is also of class $\HT$. Thus $F^{s,r}_{p,q}$ is of class $\HT$.
\end{proof}

\begin{korollar} \label{thm: TLL reflexive}
$F^{s,r}_{p,q}$ is reflexive for $s \in \mathbb{R}$ and $1 < p,q,r < \infty$ (due to~\cite{RubioDeFrancia}).
\end{korollar}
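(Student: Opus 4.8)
The plan is to deduce reflexivity of $F^{s,r}_{p,q}$ from the $\HT$-property already established in Proposition~\ref{thm: F^(s,r)_(p,q) UMD-Raum}, using the classical fact (essentially Rubio de Francia's observation, and also a theorem of Pisier) that every Banach space of class $\HT$ (equivalently, every UMD space) is reflexive. Indeed a UMD space is superreflexive, hence in particular reflexive, because it cannot contain $\ell^1$ or $c_0$ isomorphically and is $K$-convex. So the one-line proof is: $F^{s,r}_{p,q}$ is of class $\HT$ by Proposition~\ref{thm: F^(s,r)_(p,q) UMD-Raum}, and every space of class $\HT$ is reflexive by~\cite{RubioDeFrancia}; therefore $F^{s,r}_{p,q}$ is reflexive.

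Alternatively — and this is the route I would actually prefer to make the paper self-contained — one can argue directly via the interpolation representation in Theorem~\ref{thm: Interpolation von TLL-Räumen}. First I would recall that for $1<p_0,p_1<\infty$ with $p_0\neq p_1$, $1\le r_0,r_1\le\infty$, the Triebel-Lizorkin-Lorentz space $F^{s,r}_{p,q}$ with $1<r<\infty$ is, up to equivalent norm, the real interpolation space $(F^{s,r_0}_{p_0,q},F^{s,r_1}_{p_1,q})_{\theta,r}$ for a suitable $\theta\in(0,1)$. Since the Triebel-Lizorkin spaces $F^s_{p_i,q}$ are retracts of $L_{p_i}(\mathbb{R},\ell^s_q)$ (as used in the proof of Proposition~\ref{thm: F^(s,r)_(p,q) UMD-Raum}), and the latter are reflexive for $1<p_i,q<\infty$, reflexivity passes to the retract $F^s_{p_i,q}$. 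Then I would invoke the general principle that the real interpolation space $(X_0,X_1)_{\theta,r}$ of an interpolation couple of reflexive Banach spaces is reflexive whenever $1<r<\infty$ — this is a standard consequence of the fact that $(X_0,X_1)_{\theta,r}$ sits between the reflexive spaces $X_0\cap X_1$ and $X_0+X_1$ with dense and continuous inclusions and that the $K$- and $J$-functionals behave well under duality (see e.g.\ the duality theorem for the real method). This yields reflexivity of $F^{s,r}_{p,q}$ directly.

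The main (and essentially only) obstacle is simply to pin down and cite the correct form of the abstract statement being used: either ``UMD $\Rightarrow$ reflexive'' or ``real interpolation of reflexive spaces with $1<r<\infty$ is reflexive.'' Both are genuinely classical, so no serious work is required; one must only be careful that in the interpolation route the inner parameters satisfy $1<p_0,p_1,q<\infty$ (so that $F^s_{p_i,q}$ is reflexive — here one needs $q<\infty$, which is part of the standing hypotheses) and that $1<r<\infty$, which is exactly the hypothesis of the corollary. I would therefore present the short proof: observe $1<r<\infty$, apply Theorem~\ref{thm: Interpolation von TLL-Räumen} to write $F^{s,r}_{p,q}$ as a real interpolation space of the reflexive spaces $F^s_{p_0,q}$ and $F^s_{p_1,q}$, and conclude reflexivity from the interpolation principle, remarking parenthetically that this also follows from the $\HT$-property via~\cite{RubioDeFrancia}.
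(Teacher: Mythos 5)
Your primary one-liner is precisely the paper's own proof: the corollary carries the attribution ``(due to~\cite{RubioDeFrancia})'' and is read off from Proposition~\ref{thm: F^(s,r)_(p,q) UMD-Raum} together with the classical fact that spaces of class $\HT$ are reflexive. Nothing more is done in the paper for the primary argument.

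Your alternative interpolation route is also in the paper, in spirit: the remark after the corollary notes that reflexivity can be re-derived ``in a direct way'' from the duality statement Proposition~\ref{thm: Dualraum TLL}, which is itself obtained by pushing the Triebel--Lizorkin duality \cite[Thm.\ 2.6.2]{Triebel1978} through Theorem~\ref{thm: Interpolation von TLL-Räumen}. So the paper's variant explicitly identifies the dual as $F^{-s,r'}_{p',q'}$ and then reads off reflexivity, whereas you invoke an abstract permanence statement for the real method. That abstract statement is true, but your stated justification for it is off: merely sitting between two reflexive spaces $X_0\cap X_1$ and $X_0+X_1$ with dense continuous inclusions does not imply reflexivity (reflexivity passes to closed subspaces and to quotients, not to arbitrary intermediate spaces). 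The mechanism that actually does the work is the duality theorem for the real method: if $X_0\cap X_1$ is dense in $X_0$ and in $X_1$, then $\bigl((X_0,X_1)_{\theta,r}\bigr)'=(X_0',X_1')_{\theta,r'}$ for $1\le r<\infty$; since the dual couple is again regular and $1<r'<\infty$, applying this once more and using reflexivity of $X_0$ and $X_1$ identifies the bidual with $(X_0,X_1)_{\theta,r}$. With that substitution your alternative argument is correct, and you have correctly isolated the needed hypotheses, namely $1<r<\infty$ for the outer parameter and $1<p_0,p_1,q<\infty$ so that each $F^s_{p_i,q}$ is reflexive as a retract of the reflexive space $L_{p_i}(\mathbb{R}^n,l^s_q)$, with $\Schwartz$ densely contained in both endpoints.
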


Corollary~\ref{thm: TLL reflexive} could also be obtained in a direct way, regarding the following result which is a conclusion of the corresponding result for Triebel-Lizorkin spaces (see~\cite[Thm. 2.6.2]{Triebel1978}) and Theorem~\ref{thm: Interpolation von TLL-Räumen}.

\begin{satz} \label{thm: Dualraum TLL}
The dual space to $F^{s,r}_{p,q}$ is given by $F^{-s,r'}_{p',q'}$ for $s \in \mathbb{R}$ and $1 < p,q,r < \infty$ where $1 < p',q',r' < \infty$ are given by $\frac{1}{p} + \frac{1}{p'} = 1$, $\frac{1}{q} + \frac{1}{q'} = 1$ and $\frac{1}{r} + \frac{1}{r'} = 1$.
\end{satz}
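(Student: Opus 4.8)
The plan is to deduce the duality statement for $F^{s,r}_{p,q}$ from the known duality for classical Triebel-Lizorkin spaces together with the interpolation identity of Theorem~\ref{thm: Interpolation von TLL-Räumen} and the duality theorem for the real interpolation functor. First I would recall from~\cite[Thm.\ 2.11.2]{Triebel1978} that $(F^s_{p,q})' = F^{-s}_{p',q'}$ for $1<p,q<\infty$ and $s\in\mathbb{R}$. Next, fix the target parameters $s,p,q,r$ with $1<p,q,r<\infty$ and choose two exponents $p_0\ne p_1$ in $(1,\infty)$ and $\theta\in(0,1)$ with $\tfrac1p=\tfrac{1-\theta}{p_0}+\tfrac{\theta}{p_1}$, so that by Theorem~\ref{thm: Interpolation von TLL-Räumen} one has
\begin{equation*}
F^{s,r}_{p,q}=\bigl(F^s_{p_0,q},F^s_{p_1,q}\bigr)_{\theta,r}.
\end{equation*}
Since $F^s_{p_0,q}$ and $F^s_{p_1,q}$ form an interpolation couple whose intersection is dense in each of them (indeed $\Schwartz$ is dense, by Lemma~\ref{thm: Einbettungen}(i) applied at the classical level), the duality theorem for real interpolation, namely $\bigl((X_0,X_1)_{\theta,r}\bigr)' = (X_0',X_1')_{\theta,r'}$ with $\tfrac1r+\tfrac1{r'}=1$ (see~\cite[Thm.\ 1.11.2]{Triebel1978}), is applicable and yields
\begin{equation*}
\bigl(F^{s,r}_{p,q}\bigr)' = \bigl(F^{-s}_{p_0',q'},F^{-s}_{p_1',q'}\bigr)_{\theta,r'}.
\end{equation*}

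Then I would identify the right-hand side as a Triebel-Lizorkin-Lorentz space. Note that $\tfrac1{p'} = \tfrac{1-\theta}{p_0'}+\tfrac{\theta}{p_1'}$ (this follows by a direct computation from $\tfrac1p=\tfrac{1-\theta}{p_0}+\tfrac{\theta}{p_1}$ after writing each reciprocal as $1$ minus the conjugate, the constant terms $1$ combining as $(1-\theta)+\theta=1$), and $p_0'\ne p_1'$. Hence a second application of Theorem~\ref{thm: Interpolation von TLL-Räumen}, now with parameters $-s$, the exponents $p_0',p_1'$, inner summation index $q'$, and outer interpolation exponent $r'$, gives
\begin{equation*}
\bigl(F^{-s}_{p_0',q'},F^{-s}_{p_1',q'}\bigr)_{\theta,r'} = F^{-s,r'}_{p',q'},
\end{equation*}
which is the claimed dual space. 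It only remains to check that this isomorphism is the canonical pairing; since the real-interpolation duality is implemented through the duality of the endpoint spaces, and the classical Triebel-Lizorkin duality is realized via the $\Schwartz$-$\Temp$ pairing, the identification is compatible with the natural pairing on $\Schwartz$, which is dense in all spaces involved.

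The main obstacle I anticipate is not a deep one but a bookkeeping issue: one must make sure the hypotheses of the real-interpolation duality theorem are genuinely met, i.e.\ that $F^s_{p_0,q}\cap F^s_{p_1,q}$ is dense in both endpoint spaces — this is where reflexivity or at least density of $\Schwartz$ is used — and that $1\le r'\le\infty$ causes no boundary trouble (here $1<r<\infty$ forces $1<r'<\infty$, so the classical form of the theorem applies without the usual caveat about $r=1$ or $r=\infty$). A secondary point worth stating explicitly is the elementary verification that conjugation respects the interpolation relation among the $p$-exponents, since the whole argument hinges on reapplying Theorem~\ref{thm: Interpolation von TLL-Räumen} to the dual exponents. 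Beyond that, the proof is a short chain of two interpolation identities sandwiching one duality theorem.
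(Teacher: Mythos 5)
Your proposal is correct and matches the approach the paper itself indicates: the paper merely says the result is ``a conclusion of the corresponding result for Triebel-Lizorkin spaces and Theorem~\ref{thm: Interpolation von TLL-Räumen},'' and you have filled in exactly that chain — classical $F^s_{p,q}$-duality, the interpolation identity, the real-interpolation duality theorem, and a second application of the interpolation identity to the conjugate exponents. The only discrepancy is a harmless citation detail (the paper refers to \cite[Thm.\ 2.6.2]{Triebel1978} where you cite Thm.\ 2.11.2; both are intended to point to Triebel's duality result for $F^s_{p,q}$).
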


\begin{satz} \label{thm: F^(s,r)_(p,q) property alpha}
$F^{s,r}_{p,q}$ has $\Propa$ for $s \in \mathbb{R}$ and $1 < p,q,r < \infty$.
\end{satz}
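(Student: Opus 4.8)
The plan is to mimic the strategy already used for the $\HT$-property in Proposition~\ref{thm: F^(s,r)_(p,q) UMD-Raum}: first establish $\Propa$ for the Triebel-Lizorkin spaces $F^s_{p,q}$ via a retraction/coretraction argument from a suitable vector-valued $L^p$-space, and then pass to the Lorentz scale by real interpolation using Theorem~\ref{thm: Interpolation von TLL-Räumen}. For the interpolation step I would record the general fact that if $X_0, X_1$ is an interpolation couple of Banach spaces each having $\Propa$, then $(X_0,X_1)_{\theta,r}$ has $\Propa$ as well; this follows from the identity $L_p(\Omega\times\Omega',(X_0,X_1)_{\theta,r}) = \bigl(L_p(\Omega\times\Omega',X_0),L_p(\Omega\times\Omega',X_1)\bigr)_{\theta,r}$ (the two Rademacher-type averages in~\eqref{eq: Eigenschaft alpha} are just finite sums of translates, so the inequality defining $\Propa$ for $X_0$ and for $X_1$ interpolates to the same inequality for the interpolation space, with constant $\max\{\alpha_0,\alpha_1\}$). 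Granting this, once $F^s_{p_0,q}$ and $F^s_{p_1,q}$ have $\Propa$ for two exponents $p_0\neq p_1$ bracketing $p$, Theorem~\ref{thm: Interpolation von TLL-Räumen} yields $\Propa$ for $F^{s,r}_{p,q}$ for all $1<p,q,r<\infty$.

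So the real content is $\Propa$ for $F^s_{p,q}$. As in the proof of Proposition~\ref{thm: F^(s,r)_(p,q) UMD-Raum}, $F^s_{p,q}$ is a retract of $L_p(\mathbb{R}^n,l^s_q)$ via the coretraction $u\mapsto(\varphi_k*u)_k$ and a suitable retraction built from a dual dyadic resolution; since $\Propa$ is stable under passing to closed subspaces isomorphic to complemented pieces (a retract of a space with $\Propa$ again has $\Propa$, because the defining inequality~\eqref{eq: Eigenschaft alpha} transfers along bounded linear maps that split), it suffices to show $L_p(\mathbb{R}^n,l^s_q)$ has $\Propa$. Now $l^s_q$ is isomorphic to $l_q$ (the weight $2^{-ks}$ is just a renorming), and $L_p(\mathbb{R}^n,l_q)$ is a Banach lattice with finite cotype, hence has $\Propa$; one can either cite this directly or argue by hand using Fubini/Tonelli to reduce the double Rademacher average over $L_p(\Omega\times\Omega',L_p(\mathbb{R}^n,l_q))$ to a scalar $L_r$-norm estimate where $\Propa$ is trivial, exactly parallel to the $\HT$ argument which used Tonelli to see $L_q(\mathbb{R},l^s_q)$ is of class $\HT$.

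The main obstacle, such as it is, is bookkeeping: one must check that $\Propa$ genuinely transfers under the retraction/coretraction for $F^s_{p,q}$ (this requires knowing the coretraction and retraction are bounded linear operators between the relevant vector-valued $L^p$ spaces, which is standard Littlewood--Paley theory and is implicitly used already in the $\HT$ proof), and that the interpolation identity for $L_p$ of an interpolation couple is applied with the right functor and parameters. I expect no genuinely hard estimate: every ingredient — stability of $\Propa$ under retracts, under real interpolation, and its validity for $L_p(l_q)$ — is either elementary or quotable. I would therefore write the proof in three short moves: (1) $L_p(\mathbb{R}^n,l^s_q)$ has $\Propa$; (2) hence $F^s_{p,q}$ has $\Propa$ by retraction, for all $s\in\mathbb{R}$, $1<p,q<\infty$; (3) hence $F^{s,r}_{p,q}$ has $\Propa$ by Theorem~\ref{thm: Interpolation von TLL-Räumen} together with stability of $\Propa$ under the real interpolation functor.
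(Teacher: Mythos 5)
Your proposal takes essentially the same route as the paper: show $\Propa$ for $F^s_{p,q}$ via the vector-valued space $L_p(\mathbb{R}^n,l^s_q)$, then interpolate using Theorem~\ref{thm: Interpolation von TLL-Räumen} together with the stability of $\Propa$ under real interpolation, for which the paper cites~\cite[Thm.\ 4.5]{Kaip}. Two small remarks: the retraction machinery is not needed in the first step, because the inequality~\eqref{eq: Eigenschaft alpha} involves only elements of $X$ and the norm on $X$, so property~$(\alpha)$ is inherited by any closed subspace and the continuous embedding $F^s_{p,q}\hookrightarrow L_p(\mathbb{R}^n,l^s_q)$ already suffices; and in your sketch of the interpolation step the identity $L_p(\Omega\times\Omega',(X_0,X_1)_{\theta,r})=\bigl(L_p(\Omega\times\Omega',X_0),L_p(\Omega\times\Omega',X_1)\bigr)_{\theta,r}$ requires $p=r$, which you may freely arrange since $\Propa$ can be verified for any fixed $p\in[1,\infty)$.
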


\begin{proof}
The Triebel-Lizorkin spaces $F^s_{p,q}$ have $\Propa$ since there exists a continuous embedding in $L_p(\mathbb{R}^n,l^s_q)$. This implies the assertion, since $\Propa$ preserves under real interpolation. We refer to~\cite[Thm.\ 4.5]{Kaip}.

\end{proof}

\begin{theorem}[Multiplier theorem for Triebel-Lizorkin-Lorentz spaces] \label{thm: Multiplier theorem}
Let $s \in \mathbb{R}$, $1 < p,q < \infty$ and $1 \le r \le \infty$.
Let $(m_\lambda)_{\lambda \in \Lambda} \subset C^n(\mathbb{R}^n \setminus \{ 0 \}, \mathbb{C})$ such that
$C_\alpha := \sup_{\xi \in \mathbb{R}^n \setminus \{ 0 \}, \lambda \in \Lambda} |\xi^\alpha \partial^\alpha m_\lambda(\xi)| < \infty$ for all $\alpha \in \{ 0,1 \}^n$. Then for every $\lambda \in \Lambda$
\begin{equation*}
\Fourier^{-1} m_\lambda \Fourier: \Schwartz \longrightarrow \Temp
\end{equation*}
has a (unique) continuous extension
$T_\lambda: F^{s,r}_{p,q} \longrightarrow F^{s,r}_{p,q}$ such that
\begin{equation*}
\| T_\lambda \|_{\mathscr{L}(F^{s,r}_{p,q})}
\le C \max_{\alpha \in \{ 0,1 \}^n} C_\alpha,
\end{equation*}
where the constant $C > 0$ only depends on $n$ and the parameters $p,q,s,r$.
Furthermore, $(T_\lambda)_{\lambda \in \Lambda} \subset \mathscr{L}(F^{s,r}_{p,q})$ is $\RR$-bounded in the case $1 < r < \infty$.
\end{theorem}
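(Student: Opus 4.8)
The plan is to reduce the Triebel--Lizorkin--Lorentz multiplier statement to the classical Triebel--Lizorkin case together with the real interpolation property of Theorem~\ref{thm: Interpolation von TLL-R\"aumen}. First I would treat the Triebel--Lizorkin spaces $F^s_{p,q} = F^{s,p}_{p,q}$: here the claim is essentially the scalar Mikhlin multiplier theorem on $F^s_{p,q}$. I would realise $F^s_{p,q}$ as a retract of $L_p(\mathbb{R}^n,\ell^s_q)$ via the coretraction $u \mapsto (\varphi_k * u)_k$ and a suitable retraction built from a second, ''fattened'' dyadic decomposition $(\widetilde{\varphi}_k)_k$ with $\widehat{\widetilde{\varphi}_k}\widehat{\varphi}_k = \widehat{\varphi}_k$. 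The operator $\Fourier^{-1} m_\lambda \Fourier$ lifts to the diagonal operator $(a_k)_k \mapsto (\Fourier^{-1} m_\lambda \Fourier a_k)_k$ on $L_p(\mathbb{R}^n,\ell^s_q)$, so it suffices to bound this single Fourier multiplier on the vector-valued space $L_p(\mathbb{R}^n,\ell^s_q)$ with multiplier symbol $\xi \mapsto m_\lambda(\xi)\,\mathrm{id}_{\ell^s_q}$. Since $\ell^s_q$ is of class $\HT$ and has $\Propa$ (being isometric, up to the weight, to a weighted $\ell_q$), and since scalar multiples of the identity on $\ell^s_q$ are trivially $\RR$-bounded with $\RR$-bound $\lesssim \sup_\xi\sup_\lambda|\xi^\alpha\partial^\alpha m_\lambda(\xi)|$ by Kahane's contraction principle, the operator-valued Mikhlin theorem (Theorem~\ref{thm: Satz von Mikhlin operatorwertig}) applies and yields boundedness on $L_p(\mathbb{R}^n,\ell^s_q)$ with the stated dependence on the $C_\alpha$, $\alpha\in\{0,1\}^n$, and even $\RR$-boundedness of the family $(T_\lambda)_\lambda$. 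Transferring through the retract gives the result for $F^s_{p,q}$.

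Next I would pass to general $r$. Fix $s,p,q,r$ with $1 \le r \le \infty$; choose $1 < p_0 < p < p_1 < \infty$ and $\theta \in (0,1)$ with $\frac{1}{p} = \frac{1-\theta}{p_0} + \frac{\theta}{p_1}$, so that by Theorem~\ref{thm: Interpolation von TLL-R\"aumen} one has $F^{s,r}_{p,q} = (F^{s}_{p_0,q},\,F^{s}_{p_1,q})_{\theta,r}$ (using $F^{s,p_i}_{p_i,q}=F^s_{p_i,q}$). From the previous step each $T_\lambda$ is bounded on both endpoint spaces with norm $\lesssim \max_{\alpha}C_\alpha$; since $\Fourier^{-1}m_\lambda\Fourier$ is the \emph{same} operator on the Schwartz class, real interpolation (functoriality of $(\cdot,\cdot)_{\theta,r}$) produces a bounded extension $T_\lambda$ on $F^{s,r}_{p,q}$ with $\|T_\lambda\|_{\mathscr{L}(F^{s,r}_{p,q})} \lesssim (\max_\alpha C_\alpha)^{1-\theta}(\max_\alpha C_\alpha)^\theta = \max_\alpha C_\alpha$, the constant depending only on $n,p,q,s,r$. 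Uniqueness of the extension follows from density of $\Schwartz$ in $F^{s,r}_{p,q}$ (Lemma~\ref{thm: Einbettungen}\eqref{thm: Einbettungen 1}) together with continuity of $\Fourier^{-1}m_\lambda\Fourier:\Schwartz\to\Temp$ and the continuous embedding $F^{s,r}_{p,q}\subset\Temp$.

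For the final $\RR$-boundedness assertion when $1 < r < \infty$, I would argue directly rather than by interpolation, since real interpolation does not preserve $\RR$-bounds in an obvious quantitative way. Instead I would redo the retract argument at the Lorentz level: $F^{s,r}_{p,q}$ is a retract of $L_{p,r}(\mathbb{R}^n,\ell^s_q)$, and $L_{p,r}(\mathbb{R}^n,\ell^s_q)$ is itself of class $\HT$ and has $\Propa$ (these were established, in the relevant forms, en route to Propositions~\ref{thm: F^(s,r)_(p,q) UMD-Raum} and \ref{thm: F^(s,r)_(p,q) property alpha}, or can be transferred through the retract from $F^{s,r}_{p,q}$). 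One then needs a Lorentz-space version of the operator-valued Mikhlin theorem; this can be obtained from Theorem~\ref{thm: Satz von Mikhlin operatorwertig} applied with the $L_p(\mathbb{R}^n,\cdot)$-spaces replaced by $L_{p,r}(\mathbb{R}^n,\cdot)$, using the representation of $L_{p,r}$ as a real interpolation space of Lebesgue spaces together with the fact that the relevant multiplier operators $\Fourier^{-1}(m_\lambda\,\mathrm{id})\Fourier$ form an $\RR$-bounded family on \emph{each} endpoint $L_{p_i}(\mathbb{R}^n,\ell^s_q)$ and hence, by a Rademacher-averaging/interpolation argument, an $\RR$-bounded family on the interpolation space $L_{p,r}(\mathbb{R}^n,\ell^s_q)$. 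The diagonal lift and Kahane's contraction principle then give the $\RR$-bound $\lesssim \sum_{\alpha\in\{0,1\}^n}C_\alpha$, and transferring through the retract yields $\RR$-boundedness of $(T_\lambda)_\lambda$ on $F^{s,r}_{p,q}$.

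The main obstacle I anticipate is precisely this last point: establishing that $\RR$-boundedness of a family of Fourier multiplier operators survives real interpolation of the underlying $L_p$-scale into the Lorentz space $L_{p,r}$, with control on the $\RR$-bound. The subtlety is that $\RR$-boundedness is a randomized notion and does not interpolate as cleanly as mere boundedness; one must insert Rademacher sums inside the interpolation functor and use the $\Propa$/$\HT$ structure of $L_{p,r}(\mathbb{R}^n,\ell^s_q)$ to commute the averaging with the interpolation. If a clean reference for a Lorentz-valued operator-valued Mikhlin theorem is available, this step shortens considerably; otherwise it requires the careful argument sketched above.
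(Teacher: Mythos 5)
Your argument for the boundedness assertion is essentially the paper's: realize $F^s_{p,q}$ as a retract of $L_p(\mathbb{R}^n,l^s_q)$, lift the scalar symbol $m_\lambda$ to the diagonal operator-valued symbol $M_\lambda(\xi)=m_\lambda(\xi)\,\mathrm{id}_{l^s_q}$, verify the $\RR$-boundedness hypothesis of the operator-valued Mikhlin theorem via Kahane's contraction principle, transfer through the retract to $F^s_{p,q}$, and then pass to $F^{s,r}_{p,q}$ by real interpolation using the interpolation identity $F^{s,r}_{p,q}=(F^s_{p_0,q},F^s_{p_1,q})_{\theta,r}$. (The paper does not bother with a fattened dyadic system; it uses the pointwise identity $(\varphi_k * \Fourier^{-1}m_\lambda\Fourier f)_{k} = \Fourier^{-1}M_\lambda\Fourier(\varphi_k*f)_{k}$, which gives the one-sided norm inequality directly. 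That is equivalent to your retract viewpoint, just leaner, since a full retraction is not needed.)

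For the $\RR$-boundedness claim when $1<r<\infty$ you correctly identify the crux — whether an $\RR$-bounded family on the two endpoint spaces remains $\RR$-bounded on the real interpolation space — but the detour you propose does not actually evade it. Building a Lorentz-space operator-valued Mikhlin theorem by ``inserting Rademacher sums inside the interpolation functor'' \emph{is} the statement you are worried about, only relocated one level down; you would still have to prove that $\RR$-boundedness survives the passage from $L_{p_0}(l^s_q)$ and $L_{p_1}(l^s_q)$ to $L_{p,r}(l^s_q)$. The paper's route is shorter and closes the gap cleanly: it interpolates the $\RR$-bounded family $(T_\lambda)_\lambda\subset\mathscr{L}(F^s_{p_j,q})$, $j=0,1$, to the space $F^{s,r}_{p,q}$ and invokes the known result that $\RR$-boundedness is preserved under real interpolation whenever the endpoint spaces are of class $\HT$; this is Theorem~3.19 of \cite{Kaip}. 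Since each $F^s_{p_j,q}$ is of class $\HT$, that theorem applies directly and no Lorentz-valued Mikhlin statement is needed. Your instinct that $\RR$-boundedness does not interpolate in general is sound — it fails without extra hypotheses — but the $\HT$ structure available here is exactly the hypothesis that makes it work, and that is the ingredient your proposal is missing.
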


\begin{proof}
We define $M_\lambda \in L_\infty(\mathbb{R}^n,\mathscr{L}(l^s_q))$ by setting $M_\lambda(\xi)x := (m_\lambda(\xi)x_k)_{k \in \mathbb{N}_0}$ for $\xi \in \mathbb{R}^n \setminus \{ 0 \}$, $x = (x_k)_{k \in \mathbb{N}_0} \in l^s_q$ and $\lambda \in \Lambda$. By Kahane's contraction principle we see that the assumption $C_\alpha < \infty$ implies the $\RR$-boundedness of $\{ \xi^\alpha \partial^\alpha M_\lambda(\xi) : \xi \in \mathbb{R}^n \setminus \{ 0 \}, ~\lambda \in \Lambda \} \subset \mathscr{L}(l^s_q)$ and the $\RR_q$-bound doesn't exceed $2 \max_{\alpha \in \{ 0,1 \}^n} C_\alpha$. Since $l^s_q$ is of class $\HT$ (note that $1 < q < \infty$) with $\Propa$, Theorem~\ref{thm: Satz von Mikhlin operatorwertig} gives that $M_\lambda$ is a Fourier multiplier, i.e.,
\begin{equation*}
\Fourier^{-1} M_\lambda \Fourier: \mathscr{S}(\mathbb{R}^n,l^s_q) \longrightarrow \mathscr{S}'(\mathbb{R}^n,l^s_q)
\end{equation*}
has a (unique) continuous extension
$S_\lambda: L_p(l^s_q) \longrightarrow L_p(l^s_q)$ such that
\begin{equation} \label{eq: R-Schranke}
\RR_q(\{ S_\lambda : \lambda \in \Lambda \})  \le C \max_{\alpha \in \{ 0,1 \}^n} C_\alpha =: K
\end{equation}
for all $\lambda \in \Lambda$.
From the identity
\begin{equation} \label{eq: Zusammenhang von m_lambda und M_lambda}
(\varphi_k * \Fourier^{-1} m_\lambda \Fourier f)_{k \in \mathbb{N}_0}
= \Fourier^{-1} M_\lambda \Fourier (\varphi_k * f)_{k \in \mathbb{N}_0}
\end{equation}
we get $\| \Fourier^{-1} m_\lambda \Fourier f \|_{F^s_{p,q}} \le K \| f \|_{F^s_{p,q}}$ for $f \in \Schwartz$ and consequently we have a
continuous extension $T_\lambda: F^s_{p,q} \longrightarrow F^s_{p,q}$ ~of~ $\Fourier^{-1} m_\lambda \Fourier: \Schwartz \longrightarrow \Temp$.
Now~\eqref{eq: R-Schranke} and~\eqref{eq: Zusammenhang von m_lambda und M_lambda} imply $\RR_q(\{ T_\lambda : \lambda \in \Lambda \}) \le K$. Hence the assertion is proved in the case $p = r$.

In order to generalize the result, we select $1 < p_0 < p < p_1 < \infty$ and $0 < \theta < 1$ such that $\frac{1}{p} = \frac{1 - \theta}{p_0} + \frac{\theta}{p_1}$ and get $F^{s,r}_{p,q} = (F^s_{p_0,q},F^s_{p_1,q})_{\theta,r}$.
Thus for
\begin{equation*}
T_\lambda: (F^s_{p_0,q},F^s_{p_1,q})_{\theta,r} \longrightarrow (F^s_{p_0,q},F^s_{p_1,q})_{\theta,r}
\end{equation*}
we get the estimate $\| T_\lambda \|_{\mathscr{L}(F^{s,r}_{p,q})} \le C' \max_{\alpha \in \{ 0,1 \}^n} C_\alpha$ since the real interpolation method is exact of type $\theta$, where $C'>0$ is also a constant depending only on $s,p,q,r$ and $n$.

Since $F^s_{p_j,q}$ ist of class $\HT$ for $j=0,1$ we get the $\RR$-boundedness of
\begin{equation*}
T_\lambda: (F^s_{p_0,q},F^s_{p_1,q})_{\theta,r} \longrightarrow (F^s_{p_0,q},F^s_{p_1,q})_{\theta,r}
\end{equation*}
for $1 < r < \infty$ as a consequence of the case $p = r$ proved above (see \cite[Thm. 3.19]{Kaip}). 
\end{proof}

\begin{satz} \label{thm: Darstellung Ableitungen}
For $s \in \mathbb{R}$, $1 < p,q < \infty$ and $1 \le r \le \infty$ the following representations hold.
\begin{enumerate}[(i)]
\item \label{thm: Darstellung Ableitungen 1} $F^{s+\sigma,r}_{p,q} = \big\{ u \in \Temp : \Fourier^{-1} (1 + |\xi|^2)^\frac{\sigma}{2} \Fourier u \in F^{s,r}_{p,q} \}$ for $\sigma \in \mathbb{R}$.
\item \label{thm: Darstellung Ableitungen 2} $F^{s+k,r}_{p,q} = \big\{ u \in \Temp : \partial^\alpha u \in F^{s,r}_{p,q} ~\forall \alpha \in \mathbb{N}_0^n, |\alpha| \le k \big\}$ for $k \in \mathbb{N}_0$.
\item \label{thm: Darstellung Ableitungen 3} $F^{s+2m,r}_{p,q} = \big\{ u \in \Temp : \Delta^j u \in F^{s,r}_{p,q} ~\forall j \in \mathbb{N}_0, j \le m \big\}$ for $m \in \mathbb{N}_0$.
\end{enumerate}
The corresponding norms are eqivalent, where the norm of the space on the right-hand side is given by $\| \Fourier^{-1} (1 + |\xi|^2)^\frac{\sigma}{2} \Fourier u \|_{F^{s,r}_{p,q}}$ in~\eqref{thm: Darstellung Ableitungen 1}, by $\sum_{|\alpha| \le k} \| \partial^\alpha u \|_{F^{s,r}_{p,q}}$ in~\eqref{thm: Darstellung Ableitungen 2} and by $\sum_{0 \le j \le m} \| \Delta^j u \|_{F^{s,r}_{p,q}}$ in~\eqref{thm: Darstellung Ableitungen 3}.
\end{satz}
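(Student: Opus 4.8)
The plan is to treat~\eqref{thm: Darstellung Ableitungen 1} as the core assertion and to deduce parts~\eqref{thm: Darstellung Ableitungen 2} and~\eqref{thm: Darstellung Ableitungen 3} from it by combining it with the multiplier theorem (Theorem~\ref{thm: Multiplier theorem}) and a few elementary manipulations of Fourier symbols. Throughout I put $J^\sigma:=\Fourier^{-1}(1+|\xi|^2)^{\sigma/2}\Fourier$, a Fourier multiplier on $\Temp$ satisfying $J^\sigma J^\tau=J^{\sigma+\tau}$ and $J^0=\mathrm{id}$, and I write $\widehat{\varphi}_k$ for the dyadic decomposition of Example~\ref{thm: Bsp dyadische Zerlegung}.

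For~\eqref{thm: Darstellung Ableitungen 1} it suffices to show that $J^\sigma\colon F^{s+\sigma,r}_{p,q}\to F^{s,r}_{p,q}$ is an isomorphism: then $u\in F^{s+\sigma,r}_{p,q}$ if and only if $J^\sigma u\in F^{s,r}_{p,q}$ (use $u=J^{-\sigma}J^\sigma u$ for the nontrivial implication), and $\|J^\sigma\,\cdot\,\|_{F^{s,r}_{p,q}}$ is an equivalent norm. For Triebel--Lizorkin spaces themselves the corresponding lifting property $F^{s+\sigma}_{p,q}=\{u\in\Temp:J^\sigma u\in F^s_{p,q}\}$ with equivalent norm is classical; it is obtained by a frequency-localization argument, inserting a fattened dyadic partition $\widehat{\Psi}_k\equiv 1$ on $\Supp\widehat{\varphi}_k$ so that $\varphi_k*J^\sigma u=\Fourier^{-1}\bigl[\widehat{\Psi}_k(\xi)(1+|\xi|^2)^{\sigma/2}\Fourier(\varphi_k*u)\bigr]$ and noting that $2^{-k\sigma}\widehat{\Psi}_k(\xi)(1+|\xi|^2)^{\sigma/2}$ meets the Mikhlin condition uniformly in $k$ (cf.~\cite[Sec.~2.3.8]{Triebel1978}). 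Hence $J^\sigma$ is an isomorphism $F^{s+\sigma}_{p_0,q}\to F^{s}_{p_0,q}$ and $F^{s+\sigma}_{p_1,q}\to F^{s}_{p_1,q}$ for any admissible $p_0\neq p_1$. Choosing $1<p_0<p<p_1<\infty$ and $\theta\in(0,1)$ with $\frac1p=\frac{1-\theta}{p_0}+\frac{\theta}{p_1}$ and invoking Theorem~\ref{thm: Interpolation von TLL-Räumen} twice, we get $F^{s+\sigma,r}_{p,q}=(F^{s+\sigma}_{p_0,q},F^{s+\sigma}_{p_1,q})_{\theta,r}$ and $F^{s,r}_{p,q}=(F^{s}_{p_0,q},F^{s}_{p_1,q})_{\theta,r}$; since $J^{\pm\sigma}$ act consistently on these couples, real interpolation turns $J^\sigma$ into an isomorphism $F^{s+\sigma,r}_{p,q}\to F^{s,r}_{p,q}$, proving~\eqref{thm: Darstellung Ableitungen 1}.

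Given~\eqref{thm: Darstellung Ableitungen 1}, part~\eqref{thm: Darstellung Ableitungen 2} reduces to the norm equivalence $\|J^k u\|_{F^{s,r}_{p,q}}\sim\sum_{|\alpha|\le k}\|\partial^\alpha u\|_{F^{s,r}_{p,q}}$ together with the coincidence of the two distribution classes. For $|\alpha|\le k$ the symbol $(i\xi)^\alpha(1+|\xi|^2)^{-k/2}$ is smooth on $\mathbb{R}^n$ and satisfies the hypothesis of Theorem~\ref{thm: Multiplier theorem}; since $\partial^\alpha u=\bigl[\Fourier^{-1}(i\xi)^\alpha(1+|\xi|^2)^{-k/2}\Fourier\bigr]J^k u$, this yields $\|\partial^\alpha u\|_{F^{s,r}_{p,q}}\lesssim\|J^k u\|_{F^{s,r}_{p,q}}$, so the right-hand class contains the left-hand one. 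Conversely, with $m_\alpha(\xi):=(-i)^{|\alpha|}\xi^\alpha(1+|\xi|^2)^{k/2}\big/\sum_{|\beta|\le k}\xi^{2\beta}$ one has $\sum_{|\alpha|\le k}m_\alpha(\xi)(i\xi)^\alpha=(1+|\xi|^2)^{k/2}$, each $m_\alpha$ is a Mikhlin multiplier (bounded numerator and denominator degrees, denominator $\ge 1$), hence $J^k u=\sum_{|\alpha|\le k}\bigl[\Fourier^{-1}m_\alpha\Fourier\bigr]\partial^\alpha u$ gives $\|J^k u\|_{F^{s,r}_{p,q}}\lesssim\sum_{|\alpha|\le k}\|\partial^\alpha u\|_{F^{s,r}_{p,q}}$ and the reverse inclusion of classes. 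Part~\eqref{thm: Darstellung Ableitungen 3} is entirely analogous with $k$ replaced by $2m$ and $\partial^\alpha$ replaced by $\Delta^j$, $j\le m$: the identity $J^{2m}=(\mathrm{id}-\Delta)^m=\sum_{j=0}^m\binom{m}{j}(-1)^j\Delta^j$ controls $J^{2m}u$ by $\{\Delta^j u\}_{j\le m}$, while the Mikhlin multipliers $(-|\xi|^2)^j(1+|\xi|^2)^{-m}$ ($j\le m$) give $\Delta^j u=\bigl[\Fourier^{-1}(-|\xi|^2)^j(1+|\xi|^2)^{-m}\Fourier\bigr]J^{2m}u$, and~\eqref{thm: Darstellung Ableitungen 1} with $\sigma=2m$ closes the loop.

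The only genuinely nonroutine point is the lifting property in~\eqref{thm: Darstellung Ableitungen 1}, i.e.\ reconciling the intrinsic dyadic norm of $F^{s+\sigma,r}_{p,q}$ with the one obtained after applying $J^\sigma$ — equivalently, comparing dyadic decompositions at shifted frequency levels. Once this is settled for Triebel--Lizorkin spaces (classically, via the rescaling / almost-orthogonality argument indicated above, or directly through the operator-valued Mihklin theorem on $L_p(\mathbb{R}^n,l^s_q)$), the passage to the Lorentz scale is pure interpolation, and~\eqref{thm: Darstellung Ableitungen 2}, \eqref{thm: Darstellung Ableitungen 3} are bookkeeping with Theorem~\ref{thm: Multiplier theorem}.
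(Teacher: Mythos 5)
Your proposal is correct, but your treatment of part~\eqref{thm: Darstellung Ableitungen 1} follows a different route from the paper. The paper's proof is considerably more direct: starting from a fixed dyadic system $(\varphi_k)_k \in \boldsymbol{\Phi}$ it defines the modified system $\widehat{\psi}_k(\xi) = 2^{k\sigma}(1+|\xi|^2)^{-\sigma/2}\widehat{\varphi}_k(\xi)$, verifies $(\psi_k)_k \in \boldsymbol{\Phi}$ (a Mikhlin-type check of the symbol conditions), and then simply observes that $\psi_k * B^\sigma u = 2^{k\sigma}\varphi_k * u$, so that the $F^{s-\sigma,r}_{p,q}$-norm of $B^\sigma u$ computed with $(\psi_k)_k$ is exactly the $F^{s,r}_{p,q}$-norm of $u$ computed with $(\varphi_k)_k$; invoking the independence of the TLL-norm from the choice of dyadic decomposition gives~\eqref{eq: Bessel-potential} at once, uniformly in $1 \le r \le \infty$ and without any interpolation. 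You instead reduce to the classical lifting property for the Triebel-Lizorkin scale $F^s_{p,q}$ (which you sketch via a fattened-partition Mikhlin argument) and then transport it to the Lorentz scale by applying Theorem~\ref{thm: Interpolation von TLL-Räumen} twice. Both routes are valid; the paper's buys a one-step argument on the TLL scale directly, while yours relies on a known $F^s_{p,q}$ result plus interpolation and is arguably closer to the default strategy used elsewhere in the paper for transferring $L_p$-results to the Lorentz scale. For parts~\eqref{thm: Darstellung Ableitungen 2} and~\eqref{thm: Darstellung Ableitungen 3} your argument is the same in spirit as the paper's, merely with different (but equally admissible) choices of Mikhlin symbols: your $m_\alpha(\xi) = (-i)^{|\alpha|}\xi^\alpha(1+|\xi|^2)^{k/2}\big/\sum_{|\beta|\le k}\xi^{2\beta}$ for the converse estimate in~\eqref{thm: Darstellung Ableitungen 2} versus the paper's multinomial expansion of $(1+|\xi|^2)^{k/2}$ (equation~\eqref{eq: Symbolschreibweise 2}), and your $(-|\xi|^2)^j(1+|\xi|^2)^{-m}$ coinciding with the paper's symbol for~\eqref{thm: Darstellung Ableitungen 3}.
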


\begin{proof}
We consider the Bessel-potential operator $B^\sigma u := \Fourier^{-1} (1 + |\xi|^2)^\frac{\sigma}{2} \Fourier u$ for $u \in \Temp$ and $\sigma \in \mathbb{R}$.
If we fix $(\varphi_k)_{k \in \mathbb{N}_0} \in \boldsymbol{\Phi}$ and $\sigma \in \mathbb{R}$, then by setting $\widehat{\psi}_k(\xi) = \frac{2^{k \sigma}}{(1 + |\xi|^2)^\frac{\sigma}{2}} \widehat{\varphi}_k(\xi)$ we get $(\psi_k)_{k \in \mathbb{N}_0} \in \boldsymbol{\Phi}$ (see also the proof of \cite[Thm. 2.3.4]{Triebel1978}). Hence
\begin{equation} \label{eq: Bessel-potential}
\| u \|_{F^{s,r}_{p,q}} \sim \| B^\sigma u \|_{F^{s - \sigma,r}_{p,q}}
\end{equation}
and we get~\eqref{thm: Darstellung Ableitungen 1}.

Now the special case $\sigma = 2m$ in~\eqref{eq: Bessel-potential} leads to $F^{s+2m,r}_{p,q} = \big\{ u \in \Temp : (1 - \Delta)^m u \in F^{s,r}_{p,q} \big\}$ together with the equivalence $\| u \|_{F^{s+2m,r}_{p,q}} \sim \| (1 - \Delta)^m u \|_{F^{s,r}_{p,q}}$.
Hence for~\eqref{thm: Darstellung Ableitungen 3} it remains to show
$\sum_{0 \le j \le m} \| \Delta^j u \|_{F^{s,r}_{p,q}} \le C \| (1 - \Delta)^m u \|_{F^{s,r}_{p,q}}$ since the converse estimate is obvious.
For this purpose we write
\begin{equation*}
(- \Delta)^j u = \Fourier^{-1} \frac{|\xi|^{2j}}{(1 + |\xi|^2)^k} \Fourier (1 - \Delta)^k u.
\end{equation*}
Now the associated symbol $\frac{|\xi|^{2j}}{(1 + |\xi|^2)^k}$ fulfills the conditions of Theorem~\ref{thm: Multiplier theorem} and we get the assertion.

In order to verify~\eqref{thm: Darstellung Ableitungen 2} we write
\begin{equation} \label{eq: Symbolschreibweise 1}
\partial^\alpha u = i^{|\alpha|} \Fourier^{-1} \frac{\xi^\alpha}{(1 + |\xi|)^\frac{|\alpha|}{2}} \Fourier B^{|\alpha|} u \quad \text{for } |\alpha| \le k
\end{equation}
and
\begin{equation} \label{eq: Symbolschreibweise 2}
B^k u = \Fourier^{-1} \sum_{|\alpha| \le k} \frac{k!}{\alpha! (k - |\alpha|)!} \xi^\alpha \frac{\xi^\alpha}{(1 + |\xi|^2)^\frac{k}{2}} \Fourier u.
\end{equation}
Now using again Theorem~\ref{thm: Multiplier theorem} and~\eqref{eq: Bessel-potential} we get $\| u \|_{F^{s+k,r}_{p,q}} \sim \sum_{|\alpha| \le k} \| \partial^\alpha u \|_{F^{s,r}_{p,q}}$ where~\eqref{eq: Symbolschreibweise 2} gives the estimate ''$\le$'' and~\eqref{eq: Symbolschreibweise 1} gives ''$\ge$''. 
\end{proof}

\section{The Laplace operator in $F^{s,r}_{p,q}$} \label{secLap}

The Laplace operator in $F^{s,r}_{p,q}$ for $s \in \mathbb{R}$, $1 < p,q < \infty$ and $1 \le r \le \infty$ is defined as
\begin{equation*}
A_L = A^{s,r}_{L,p,q}: \Def(A_L) \subset F^{s,r}_{p,q} \longrightarrow F^{s,r}_{p,q},
\quad u \longmapsto - \Delta u,
\end{equation*}
where the domain is $\Def(A_L) = F^{s+2,r}_{p,q}$.

\begin{satz} \label{thm: R-Sektorialitaet Laplace-Operator}
$A_L$ is $\RR$-sectorial with $\varphi^\RR_{A_L} = 0$ for $s \in \mathbb{R}$ and $1 < p,q,r < \infty$ and we have
$(A^{s,r}_{L,p,q})' = A^{-s,r'}_{L,p',q'}$.
\end{satz}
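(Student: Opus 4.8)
The plan is to read off $\RR$-sectoriality from the Fourier-multiplier representation of the resolvent combined with the multiplier Theorem~\ref{thm: Multiplier theorem}, then to promote ``pseudo-'' to genuine $\RR$-sectoriality via the nullspace and reflexivity, and finally to identify the adjoint by dualizing the resolvent. As a preliminary, $A_L$ is closed and densely defined: $\Def(A_L)=F^{s+2,r}_{p,q}$ contains $\Schwartz$, which is dense in $F^{s,r}_{p,q}$ by Lemma~\ref{thm: Einbettungen}~\eqref{thm: Einbettungen 1}, and closedness follows from the norm equivalence $\|u\|_{F^{s+2,r}_{p,q}}\sim\|(I+A_L)u\|_{F^{s,r}_{p,q}}$ of Proposition~\ref{thm: Darstellung Ableitungen}. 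For $\lambda\in\mathbb{C}\setminus(-\infty,0]$ the symbol $\xi\mapsto(1+|\xi|^2)(\lambda+|\xi|^2)^{-1}$ satisfies the hypotheses of Theorem~\ref{thm: Multiplier theorem} for this fixed $\lambda$, so by Proposition~\ref{thm: Darstellung Ableitungen}~\eqref{thm: Darstellung Ableitungen 1} the operator $R_\lambda:=\Fourier^{-1}(\lambda+|\xi|^2)^{-1}\Fourier$ maps $F^{s,r}_{p,q}$ boundedly into $F^{s+2,r}_{p,q}$; a direct computation on the Fourier side shows $R_\lambda(\lambda+A_L)=\mathrm{id}$ on $\Schwartz$ and $(\lambda+A_L)R_\lambda=\mathrm{id}$ on $\Schwartz$, so by density $R_\lambda=(\lambda+A_L)^{-1}$ and in particular $\sigma(A_L)\subseteq[0,\infty)$.

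Next I would establish the $\RR$-bound. Fix $\varphi\in(0,\pi)$ and choose $\varphi'\in(0,\varphi)$. Writing $\lambda(\lambda+A_L)^{-1}=\Fourier^{-1}m_\lambda\Fourier$ with $m_\lambda(\xi)=h_\lambda(|\xi|^2)$ and $h_\lambda(z):=\lambda(\lambda+z)^{-1}$, one checks that for $\lambda\in\Sigma_{\pi-\varphi}$ the pole $-\lambda$ of $h_\lambda$ satisfies $|\arg(-\lambda)|>\varphi$, so a standard sector estimate gives $|\lambda+z|\ge\sin(\varphi-\varphi')\,|\lambda|$ for all $z\in\Sigma_{\varphi'}$; hence $h_\lambda$ is holomorphic on $\Sigma_{\varphi'}$ with $\|h_\lambda\|_{L_\infty(\Sigma_{\varphi'})}\le 1/\sin(\varphi-\varphi')$ uniformly in $\lambda\in\Sigma_{\pi-\varphi}$. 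Lemma~\ref{thm: Symbolabschaetzung}~\eqref{thm: Symbolabschaetzung 1} then shows $\sup_{\xi\ne0,\,\lambda\in\Sigma_{\pi-\varphi}}|\xi|^{|\alpha|}|\partial^\alpha m_\lambda(\xi)|<\infty$ for every $\alpha\in\mathbb{N}_0^n$, in particular for $\alpha\in\{0,1\}^n$, so Theorem~\ref{thm: Multiplier theorem} (using here $1<r<\infty$) gives that $\{\lambda(\lambda+A_L)^{-1}:\lambda\in\Sigma_{\pi-\varphi}\}$ is $\RR$-bounded on $F^{s,r}_{p,q}$. Since $\varphi\in(0,\pi)$ was arbitrary, $A_L$ is pseudo-$\RR$-sectorial with $\varphi^\RR_{A_L}=0$.

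To remove the prefix ``pseudo-'' I would show $\Kern(A_L)=\{0\}$: if $u\in F^{s+2,r}_{p,q}$ with $\Delta u=0$, then $|\xi|^2\widehat u=0$, so $\widehat u$ is supported in $\{0\}$ and $u$ is a polynomial; for the dyadic decomposition of Example~\ref{thm: Bsp dyadische Zerlegung} one has $\varphi_k*u=0$ for $k\ge1$ and $\varphi_0*u=u$, whence $\|u\|_{F^{s,r}_{p,q}}=\|u\|_{L_{p,r}}$, which is finite only for $u=0$. Since $F^{s,r}_{p,q}$ is reflexive (Corollary~\ref{thm: TLL reflexive}), the standard splitting $F^{s,r}_{p,q}=\Kern(A_L)\oplus\overline{\Bild(A_L)}$ valid for pseudo-sectorial operators on reflexive spaces (see, e.g., \cite{Haase}) forces $\overline{\Bild(A_L)}=F^{s,r}_{p,q}$, so $A_L$ is $\RR$-sectorial with $\varphi^\RR_{A_L}=0$.

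It remains to identify the adjoint. By Proposition~\ref{thm: Dualraum TLL} we have $(F^{s,r}_{p,q})'=F^{-s,r'}_{p',q'}$, and the duality pairing extends the $\Temp$--$\Schwartz$ pairing. Dualizing the bounded resolvent $(I+A_L)^{-1}=\Fourier^{-1}(1+|\xi|^2)^{-1}\Fourier$ and using that the symbol $(1+|\xi|^2)^{-1}$ is even, one computes on $\Schwartz$ that the Banach-space adjoint $\bigl((I+A_L)^{-1}\bigr)'$ is the same Fourier multiplier, now acting on $F^{-s,r'}_{p',q'}$, which by Proposition~\ref{thm: Darstellung Ableitungen} (applied in $F^{-s,r'}_{p',q'}$) equals $\bigl(I+A^{-s,r'}_{L,p',q'}\bigr)^{-1}$. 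Since the adjoint of a resolvent is the resolvent of the adjoint, comparing ranges and operators yields $(A^{s,r}_{L,p,q})'=A^{-s,r'}_{L,p',q'}$. The only points requiring real care beyond routine bookkeeping are the uniform-in-$\lambda$ symbol bound of the second paragraph and the density of $\Bild(A_L)$; everything else is immediate from the cited results.
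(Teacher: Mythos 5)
Your proof is correct and follows essentially the same route as the paper: the resolvent is identified as a Fourier multiplier, $\RR$-boundedness of $\{\lambda(\lambda+A_L)^{-1}\}$ is read off from Lemma~\ref{thm: Symbolabschaetzung}~\eqref{thm: Symbolabschaetzung 1} together with Theorem~\ref{thm: Multiplier theorem}, density of $\Bild(A_L)$ comes from the reflexivity splitting $F^{s,r}_{p,q}=\Kern(A_L)\oplus\overline{\Bild(A_L)}$, and the adjoint is identified via the resolvent. The one genuine divergence is in the injectivity step, and it is a clean improvement: you show $\Kern(A_L)=\{0\}$ directly for \emph{all} $s\in\mathbb{R}$ by observing that for a polynomial $u$ the dyadic pieces $\varphi_k*u$ vanish for $k\ge1$ and $\varphi_0*u=u$, so that $\|u\|_{F^{s,r}_{p,q}}=\|u\|_{L_{p,r}}$, and a nonzero polynomial cannot lie in $L_{p,r}$. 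The paper instead restricts first to $s>-2$ (so that $F^{s+2,r}_{p,q}\subset L_{p,\infty}$ by Lemma~\ref{thm: Einbettungen}~\eqref{thm: Einbettungen 3}) to rule out polynomials, establishes $\RR$-sectoriality and the adjoint identity in that range, and only then transfers to $s\le -2$ by the duality $(F^{s,r}_{p,q})'=F^{-s,r'}_{p',q'}$ together with permanence of $\RR$-sectoriality under taking adjoints on reflexive spaces. Your approach avoids this detour altogether and thus shortens the final paragraph: once injectivity holds for every $s$, the decomposition argument immediately gives $\RR$-sectoriality for all $s$, and the adjoint computation (identical in substance to the paper's: the resolvent $(I+A_L)^{-1}$ has an even symbol, hence is self-transpose under the $\Schwartz$--$\Temp$ pairing, which together with $(F^{s,r}_{p,q})'=F^{-s,r'}_{p',q'}$ forces $(A^{s,r}_{L,p,q})'=A^{-s,r'}_{L,p',q'}$) becomes a separate, independent statement rather than also carrying the burden of extending $\RR$-sectoriality to low $s$.
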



\begin{proof}
Lemma \ref{thm: Einbettungen}~\eqref{thm: Einbettungen 1} implies that $A_L$ is densely defined.
For $\lambda \in \mathbb{C} \setminus (-\infty,0]$ we would like to have $\lambda \in \rho(-A_L)$ with
\begin{equation} \label{eq: Resolvente}
(\lambda + A_L)^{-1} = \Fourier^{-1} \frac{1}{\lambda + |\xi|^2} \Fourier.
\end{equation}
Therefore, we consider the symbols $\frac{1}{\lambda + |\xi|^2}$ and $\frac{|\xi|^2}{\lambda + |\xi|^2}$, which are smooth and fulfill the conditions of Theorem~\ref{thm: Multiplier theorem}. From the first symbol we get that~\eqref{eq: Resolvente} defines a bounded operator on $F^{s,r}_{p,q}$. From the second symbol and Proposition~\ref{thm: Darstellung Ableitungen} we get that~\eqref{eq: Resolvente} has in fact values in $F^{s+2,r}_{p,q}$ and hence must be the inverse operator of $\lambda + A_L$.

To prove the claimed $\RR$-boundedness of $\{ \lambda(\lambda + A_L)^{-1} : \lambda \in \Sigma_\varphi \} \subset \mathscr{L}(F^{s,r}_{p,q})$, we need the uniform estimate
\begin{equation*}
\sup_{\xi \in \mathbb{R}^n, ~\lambda \in \Sigma_\varphi} |\xi^\alpha \partial^\alpha m_\lambda(\xi)| < \infty
\end{equation*}
for all $\alpha \in \mathbb{N}_0^n$ and $\varphi < \pi$, where $m_\lambda(\xi) := \frac{\lambda}{\lambda + |\xi|^2}$. This is a consequence of Lemma~\ref{thm: Symbolabschaetzung}~\eqref{thm: Symbolabschaetzung 1}, so we can apply Theorem~\ref{thm: Multiplier theorem}.
Summarizing, $A_L$ is pseudo-$\RR$-sectorial with $\varphi_{A_L}^\RR = 0$.

Let now initially $s > -2$. Then we obtain in an elementary way that $A_L$ is injective: For $u \in \mathcal{N}(A_L)$ we have $\Supp(\widehat{u}) \subset \{ 0 \}$ and thus $u$ is a polynomial (see e.g. \cite[Cor. 2.4.2]{Gra1_3rd}). Lemma \ref{thm: Einbettungen} gives that $F^{s+2,r}_{p,q} \subset L_{p,\infty}$ and it is not hard to show that $L_{p,\infty}$ doesn't contain any nontrivial polynomials. Hence $u = 0$.
Now we consider the decomposition $F^{s,r}_{p,q} = \mathscr{N}(A_L) \oplus \overline{\mathscr{R}(A_L)}$, which is a consequence of the pseudo-$\RR$-sectoriality proved above and of the reflexivity of $F^{s,r}_{p,q}$ obtained in Corollary~\ref{thm: TLL reflexive} (see e.g. \cite[Prop. 2.1.1]{Haase}).
The injectivity of $A_L$ then gives the density of $\mathscr{R}(A_L) \subset F^{s,r}_{p,q}$.

By integration by parts we easily obtain
$A^{-s,r'}_{L,p',q'}\subset (A^{s,r}_{L,p,q})'$. The fact that 
$1\in\rho(A^{s,r}_{L,p,q})$ for all $s\in\mathbb{R}$ and $1<p,q,r<\infty$ then
gives $A^{-s,r'}_{L,p',q'}= (A^{s,r}_{L,p,q})'$. Since 
$(F^{s,r}_{p,q})'=F^{-s,r'}_{p',q'}$, the $\RR$-sectoriality with 
$\varphi^\RR_{A_L}=0$ for $s\le -2$ now follows by standard permanence properties
for $\RR$-sectorial operators.
\end{proof}

\begin{bemerkung} \label{thm: Injektivität Laplace}
The proof of Proposition~\ref{thm: R-Sektorialitaet Laplace-Operator} shows that for $r \in \{ 1,\infty \}$ we still have that $A_L$ is pseudo-sectorial with $\varphi_{A_L} = 0$ and, in the case $s > -2$, $A_L$ is injective.
\end{bemerkung}

\begin{satz} \label{thm: H^infty Kalkül Laplace}
Let $s \in \mathbb{R}$ and $1 < p,q,r < \infty$.
Then $A_L$ has an $\RR$-bounded $H^\infty$-calculus with $\varphi_{A_L}^{\RR,\infty}$ = 0.
\end{satz}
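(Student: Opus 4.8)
The plan is to first establish that $A_L$ has a \emph{bounded} $H^\infty$-calculus with $H^\infty$-angle $\varphi_{A_L}^\infty = 0$, and then upgrade this to an $\RR$-bounded $H^\infty$-calculus by invoking Theorem~\ref{thm: H^infty und Propa impliziert RH^infty} together with Proposition~\ref{thm: F^(s,r)_(p,q) property alpha}, which gives that $F^{s,r}_{p,q}$ has $\Propa$. Thus the whole work sits in proving the (non-$\RR$) bounded $H^\infty$-calculus with angle $0$.

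For the bounded $H^\infty$-calculus, I would argue exactly as in the proof of Proposition~\ref{thm: R-Sektorialitaet Laplace-Operator}, replacing resolvent symbols by symbols of the form $h(|\xi|^2)$. Concretely: by Proposition~\ref{thm: R-Sektorialitaet Laplace-Operator} (together with Remark~\ref{thm: Injektivität Laplace}) we already know $A_L$ is sectorial with spectral angle $0$, and when $s > -2$ it is injective; the case $s \le -2$ can then be treated by duality using $(A^{s,r}_{L,p,q})' = A^{-s,r'}_{L,p',q'}$ and the permanence of a bounded $H^\infty$-calculus under taking adjoints on reflexive spaces. So assume $s > -2$. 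Fix $\varphi \in (0,\pi)$ and let $h \in \mathscr{H}_0(\Sigma_\varphi)$. The operator $h(A_L)$ is computed via the Dunford integral, and a standard Fourier-multiplier identification (using \eqref{eq: Resolvente} and the same reasoning as in \eqref{eq: Zusammenhang von m_lambda und M_lambda}) gives
\begin{equation*}
h(A_L) = \Fourier^{-1}\, h(|\xi|^2)\, \Fourier
\end{equation*}
on $F^{s,r}_{p,q}$. By Lemma~\ref{thm: Symbolabschaetzung}\eqref{thm: Symbolabschaetzung 1}, the symbol $\xi \mapsto h(|\xi|^2)$ satisfies
\begin{equation*}
\sup_{\xi \in \mathbb{R}^n \setminus \{0\}} |\xi|^{|\alpha|}\, |\partial^\alpha (h(|\cdot|^2))(\xi)| \le C_{\alpha,\varphi}\, \|h\|_{L_\infty(\Sigma_\varphi)}
\end{equation*}
for every $\alpha \in \mathbb{N}_0^n$, hence in particular for $\alpha \in \{0,1\}^n$. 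Theorem~\ref{thm: Multiplier theorem} then yields
\begin{equation*}
\|h(A_L)\|_{\mathscr{L}(F^{s,r}_{p,q})} \le C \max_{\alpha \in \{0,1\}^n} C_{\alpha,\varphi}\, \|h\|_{L_\infty(\Sigma_\varphi)} = C_\varphi\, \|h\|_{L_\infty(\Sigma_\varphi)},
\end{equation*}
which is \eqref{eq: H^infty}. Since $\varphi \in (0,\pi)$ was arbitrary, $\varphi_{A_L}^\infty = 0$.

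The main obstacle is purely bookkeeping rather than conceptual: one must make sure the identification $h(A_L) = \Fourier^{-1} h(|\xi|^2) \Fourier$ is legitimate, i.e.\ that the abstract Dunford integral of resolvents really produces the Fourier multiplier with symbol $h(|\xi|^2)$ — this follows by writing the Dunford integral, inserting \eqref{eq: Resolvente}, and interchanging the contour integral with $\Fourier^{-1}(\cdot)\Fourier$ (justified on $\Schwartz$ and extended by density via Lemma~\ref{thm: Einbettungen}\eqref{thm: Einbettungen 1}), using $\frac{1}{2\pi i}\int_\Gamma h(\lambda)\frac{1}{\lambda + |\xi|^2}\,d\lambda = h(|\xi|^2)$ by Cauchy's integral formula. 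Once this is in place and the case $s \le -2$ is handled by the adjoint argument above, the $\RR$-bounded version with $\varphi_{A_L}^{\RR,\infty} = 0$ is immediate from Theorem~\ref{thm: H^infty und Propa impliziert RH^infty}.
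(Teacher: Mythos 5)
Your proposal is correct and follows essentially the same route as the paper: reduce to a bounded $H^\infty$-calculus via Proposition~\ref{thm: F^(s,r)_(p,q) property alpha} and Theorem~\ref{thm: H^infty und Propa impliziert RH^infty}, identify $f(A_L)=\Fourier^{-1}f(|\xi|^2)\Fourier$ by Cauchy's integral formula inserted into the Dunford integral, and close with Lemma~\ref{thm: Symbolabschaetzung}~\eqref{thm: Symbolabschaetzung 1} and Theorem~\ref{thm: Multiplier theorem}. The only deviation is your case split at $s=-2$ with a duality argument, which is unnecessary: Proposition~\ref{thm: R-Sektorialitaet Laplace-Operator} already establishes sectoriality of $A_L$ (including dense range) for all $s\in\mathbb{R}$ and $1<p,q,r<\infty$, so the multiplier argument applies directly for every $s$.
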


\begin{proof}
Thanks to Proposition~\ref{thm: F^(s,r)_(p,q) property alpha} and Theorem~\ref{thm: H^infty und Propa impliziert RH^infty} it is sufficient to prove that $A_L$ has a bounded $H^\infty$-calculus with $\varphi_{A_L}^\infty$ = 0.
Let $\varphi \in (0,\pi)$ and $f \in \mathscr{H}_0(\Sigma_\varphi)$.
$A_L$ is sectorial thanks to Proposition~\ref{thm: R-Sektorialitaet Laplace-Operator}.
Using Cauchy's integral formula we get
$f(A_L)u = \Fourier^{-1} f(|\xi|^2) \Fourier u$
for all $u \in \Schwartz$. Now the symbol $f(|\xi|^2)$ fulfills the condition of Theorem~\ref{thm: Multiplier theorem} (due to Lemma~\ref{thm: Symbolabschaetzung}~\eqref{thm: Symbolabschaetzung 1})
so we have
\begin{equation*}
\| f(A_L) \|_{\mathscr{L}(F^{s,r}_{p,q})}
\le C_\varphi \| f \|_{L_\infty(\Sigma_\varphi)}.
\end{equation*}
\end{proof}

Note that Proposition~\ref{thm: H^infty Kalkül Laplace}
implies~\ref{thm: R-Sektorialitaet Laplace-Operator} if we only knew the
sectoriality of $A_L$. But, as the proof of
Proposition~\ref{thm: R-Sektorialitaet Laplace-Operator} shows, $\RR$-sectoriality 
can be obtained in a direct way at essentially the same cost.

Now we consider an alternative representation for Triebel-Lizorkin-Lorentz spaces. We would like to prove that $F^{s + 2 \alpha,r}_{p,q}$ is the domain of $(1 - \Delta)^\alpha$ in $F^{s,r}_{p,q}$, where $\alpha \in [0,1]$.

\begin{satz} \label{thm: alpha Potenz von A}
Let $s \in \mathbb{R}$, $1 < p,q < \infty$ and $1 \le r \le \infty$. Then
\begin{equation*}
\A: \Def(\A) = \Def(A_L) \subset F^{s,r}_{p,q} \longrightarrow F^{s,r}_{p,q}, \quad
u \longmapsto (1 - \Delta)u
\end{equation*}
is sectorial with angle $\varphi_\A = 0$ and for $\alpha \in [0,1]$
\begin{equation} \label{eq: Definitionsbereich A^alpha}
\Def(\A^\alpha) = \big\{ u \in F^{s,r}_{p,q} ~|~ \Fourier^{-1} (1 + |\xi|^2)^\alpha \Fourier u \in F^{s,r}_{p,q} \big\}
= F^{s + 2 \alpha,r}_{p,q}
\end{equation}
holds with equivalent norms, i.e., $\| u \|_{\Def(\A^\alpha)} \sim \| \Fourier^{-1} (1 + |\xi|^2)^\alpha \Fourier u \|_{F^{s,r}_{p,q}}$ for all $u \in \Def(\A^\alpha)$. Moreover, we have
\begin{equation} \label{eq: Darstellung A^alpha}
\A^\alpha u = \Fourier^{-1} (1 + |\xi|^2)^\alpha \Fourier u
\end{equation}
for all $u \in \Def(\A^\alpha)$.
\end{satz}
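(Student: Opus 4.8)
The plan is to establish $\RR$-sectoriality of $\A$ first, then identify the fractional powers via the Fourier-multiplier machinery already developed. Since $\A = A_L + 1$ and $A_L$ is $\RR$-sectorial with $\varphi^\RR_{A_L}=0$ by Proposition~\ref{thm: R-Sektorialitaet Laplace-Operator} (respectively pseudo-$\RR$-sectorial for $r\in\{1,\infty\}$, cf.\ Remark~\ref{thm: Injektivität Laplace}), the shifted operator $\A$ is sectorial with $\varphi_\A=0$; moreover $\A$ is invertible since $0\in\rho(-\A)$ by the resolvent formula~\eqref{eq: Resolvente} applied at $\lambda=1$. In particular $\A$ is a genuinely sectorial operator to which the extended Dunford calculus applies, so the fractional powers $\A^\alpha$ are well defined via the functional calculus applied to $z\mapsto z^\alpha$.

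Next I would compute $\A^\alpha$ explicitly on a dense set. For $u\in\Schwartz$ one has, by the standard contour-integral representation of the fractional power together with Cauchy's integral formula applied pointwise in the Fourier variable (exactly as in the proof of Proposition~\ref{thm: H^infty Kalkül Laplace} for $f(A_L)$), the identity
\begin{equation*}
\A^\alpha u = \Fourier^{-1} (1 + |\xi|^2)^\alpha \Fourier u.
\end{equation*}
The symbol $m_\alpha(\xi) := (1+|\xi|^2)^\alpha$ and, more to the point, the symbols $m_\alpha(\xi)(1+|\xi|^2)^{-1}$ and $(1+|\xi|^2)m_\alpha(\xi)^{-1}$ are smooth on all of $\mathbb{R}^n$ and satisfy the Mikhlin-type bounds $\sup_\xi|\xi^\beta\partial^\beta(\cdot)|<\infty$ for $\beta\in\{0,1\}^n$ (one checks this directly: each differentiation in $\xi_j$ produces a factor $\xi_j(1+|\xi|^2)^{-1}$ times a bounded term). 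Hence by the Multiplier Theorem~\ref{thm: Multiplier theorem}, $\Fourier^{-1} m_\alpha \Fourier$ extends to a bounded operator $F^{s+2,r}_{p,q}\to F^{s,r}_{p,q}$ and its inverse extends boundedly the other way; combined with Proposition~\ref{thm: Darstellung Ableitungen}~\eqref{thm: Darstellung Ableitungen 1} (the Bessel-potential lift $\Fourier^{-1}(1+|\xi|^2)^{\sigma/2}\Fourier$ is an isomorphism $F^{s+\sigma,r}_{p,q}\to F^{s,r}_{p,q}$, here with $\sigma=2\alpha$), this gives the norm equivalence $\|\Fourier^{-1}(1+|\xi|^2)^\alpha\Fourier u\|_{F^{s,r}_{p,q}}\sim\|u\|_{F^{s+2\alpha,r}_{p,q}}$ and the middle equality in~\eqref{eq: Definitionsbereich A^alpha}.

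It then remains to match the abstractly defined domain $\Def(\A^\alpha)$ with the concretely defined space $\{u\in F^{s,r}_{p,q}:\Fourier^{-1}(1+|\xi|^2)^\alpha\Fourier u\in F^{s,r}_{p,q}\}$. For $\alpha\in(0,1)$ the cleanest route is to invoke~\eqref{eq: H^infty und komplexe Interpolation}, valid because $A_L$ — hence $\A$ — has a bounded $H^\infty$-calculus (Proposition~\ref{thm: H^infty Kalkül Laplace}): thus $\Def(\A^\alpha)=[F^{s,r}_{p,q},F^{s+2,r}_{p,q}]_\alpha$, and the latter complex interpolation space equals $F^{s+2\alpha,r}_{p,q}$ by the Bessel-potential characterization — alternatively one argues directly that $\A^\alpha$ and $\Fourier^{-1}m_\alpha\Fourier$ agree on $\Schwartz$, which is a core for both (density of $\Schwartz$ from Lemma~\ref{thm: Einbettungen}~\eqref{thm: Einbettungen 1} and the fact that $\Def(A_L)\subset\Def(\A^\alpha)$ is a core), and that a closed operator agreeing with a bounded one on a core has that bounded operator as its closure, forcing the domains to coincide. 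The cases $\alpha=0$ and $\alpha=1$ are trivial. The main obstacle is the bookkeeping in the last step: one must be careful that the extended functional calculus for the (not a priori bounded) symbol $z^\alpha$ really does produce the operator $\Fourier^{-1}(1+|\xi|^2)^\alpha\Fourier$ and not merely a restriction of it, which is why routing through the $H^\infty$-calculus identity~\eqref{eq: H^infty und komplexe Interpolation} together with Proposition~\ref{thm: Darstellung Ableitungen} is the safest path and keeps the argument short.
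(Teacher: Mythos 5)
Your first two steps (sectoriality of $\A$ via the shift of $A_L$, the Cauchy-integral computation giving $\A^\alpha u=\Fourier^{-1}(1+|\xi|^2)^\alpha\Fourier u$ on $\Schwartz$, and the multiplier-theorem argument for the middle equality in \eqref{eq: Definitionsbereich A^alpha}) agree with the paper. The problem is the last step, and your ``safest path'' is the one that fails. The identity $[F^{s,r}_{p,q},F^{s+2,r}_{p,q}]_\alpha=F^{s+2\alpha,r}_{p,q}$ is not available at this point: in this paper it is precisely the corollary that is \emph{deduced from} Proposition~\ref{thm: alpha Potenz von A} (together with Proposition~\ref{thm: H^infty Kalkül Laplace} and \eqref{eq: H^infty und komplexe Interpolation}), so invoking it here is circular. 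Moreover, that route requires a bounded $H^\infty$-calculus, which Proposition~\ref{thm: H^infty Kalkül Laplace} only gives for $1<r<\infty$, whereas the statement you are proving covers $1\le r\le\infty$; your primary argument therefore misses the endpoint cases entirely.

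Your ``alternatively'' route is much closer to what actually works, but the closing principle as you state it is wrong: $\Fourier^{-1}(1+|\xi|^2)^\alpha\Fourier$ with domain $F^{s+2\alpha,r}_{p,q}$ is an \emph{unbounded} closed operator on $F^{s,r}_{p,q}$, not a bounded one, so ``a closed operator agreeing with a bounded one on a core has that bounded operator as its closure'' does not apply. What you want is: two closed operators that agree on a common core coincide, after checking that $\Schwartz$ is a core for both $\A^\alpha$ and the Bessel-potential operator. The paper avoids even this and proceeds more concretely: it computes $(g h_\alpha)(\A)$ and $g(\A)^{-1}$ explicitly as Fourier multipliers (with $g(z)=z/(1+z)^2$), and then shows directly, via Theorem~\ref{thm: Multiplier theorem}, that whenever $\Fourier^{-1}(1+|\xi|^2)^\alpha\Fourier u\in F^{s,r}_{p,q}$ one has $(g h_\alpha)(\A)u\in\Def(\A)\cap\Bild(\A)$, i.e.\ $u\in\Def(\A^\alpha)$, and conversely applies the multiplier theorem to $(1+|\xi|^2)^{-\alpha}$ to obtain the reverse norm estimate and hence the reverse inclusion. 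This is elementary, self-contained, and valid for all $1\le r\le\infty$. You should either carry out the core argument carefully with the correct closed-operator principle, or follow the paper's direct domain-inclusion computation; the complex-interpolation shortcut has to be abandoned.
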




\begin{proof}
The second equality in~\eqref{eq: Definitionsbereich A^alpha} is Proposition~\ref{thm: Darstellung Ableitungen}~\eqref{thm: Darstellung Ableitungen 1}.
The Laplace operator $A_L$ is pseudo-sectorial with angle $\varphi_{A_L} = 0$ and so is $\A$. Now $-1 \in \rho(A_L)$, so $\A$ is bijective and thus sectorial.

We now assume $\alpha \in (0,1)$ since the cases $\alpha = 0$ and $\alpha = 1$ are obvious.
We set $g(z):= \frac{z}{(1 + z)^2}$ and $h_\alpha(z) := z^\alpha$. Using Cauchy's integral formula, we obtain
\begin{equation} \label{eq: (g h_alpha)(A)f}
(g h_\alpha)(\A)f = \Fourier^{-1} \frac{(1 + |\xi|^2)^{\alpha + 1}}{(2 + |\xi|^2)^2} \Fourier f
\end{equation}
for all $f \in \Schwartz$. Theorem~\ref{thm: Multiplier theorem} gives that~\eqref{eq: (g h_alpha)(A)f} even holds for all $f \in F^{s,r}_{p,q}$. Now $\A: \Def(\A) \rightarrow F^{s,r}_{p,q}$ is bijective with
$\A^{-1} f = \Fourier^{-1} \frac{1}{1 + |\xi|^2} \Fourier f$ for $f \in F^{s,r}_{p,q}$ and thus we get
\begin{equation} \label{eq: g(A)^-1}
g(\A)^{-1} f
= (1 + \A)^2 \A^{-1} f
= \Fourier^{-1} \frac{(2 + |\xi|^2)^2}{1 + |\xi|^2} \Fourier f
\end{equation}
for all $f \in \Def(\A) \cap \Bild(\A)$. Relations~\eqref{eq: (g h_alpha)(A)f} and~\eqref{eq: g(A)^-1} yield~\eqref{eq: Darstellung A^alpha}
since $\A^\alpha$ is given by $g(\A)^{-1} (g h_\alpha)(\A)$.

Now we verify~\eqref{eq: Definitionsbereich A^alpha} together with the equivalence of the norms. For this purpose let first $u \in F^{s,r}_{p,q}$ so that $\Fourier^{-1} (1 + |\xi|^2)^\alpha \Fourier u \in F^{s,r}_{p,q}$. Then, using~\eqref{eq: (g h_alpha)(A)f} and Theorem~\ref{thm: Multiplier theorem}, we get
$(2 - \Delta)(g h_\alpha)(\A)u \in F^{s,r}_{p,q}$.
Consequently, we have $(g h_\alpha)(\A)u \in \Def(\A)$.
Now, again using~\eqref{eq: (g h_alpha)(A)f}, we can also write $(g
h_\alpha)(\A)u = (1 - \Delta)v$, where $v := \Fourier^{-1} \frac{(1 +
|\xi|^2)^\alpha}{(2 + |\xi|^2)^2} \Fourier u$. Then Theorem~\ref{thm:
Multiplier theorem} gives $v \in \Def(\A)$ and thus $(g h_\alpha)(\A)u
\in \Bild(\A)$. Summarizing, we obtain $u \in \Def(\A^\alpha)$. Hence we have $\big\{ u \in F^{s,r}_{p,q} ~|~ \Fourier^{-1} (1 + |\xi|^2)^\alpha \Fourier u \in F^{s,r}_{p,q} \big\} \subset \Def(\A^\alpha)$, so we can restrict ourselves to $u \in \Def(\A^\alpha)$ to show the equivalence
\begin{equation} \label{eq: Äquivalenz der Normen}
\| u \|_{\Def(\A^\alpha)} 
= \| u \|_{F^{s,r}_{p,q}} + \| \A^\alpha u \|_{F^{s,r}_{p,q}}
\sim \| \Fourier^{-1} (1 + |\xi|^2)^\alpha \Fourier u \|_{F^{s,r}_{p,q}}.
\end{equation}
For $u \in \Def(\A^\alpha)$ we can apply~\eqref{eq: Darstellung
A^alpha}, so we directly get "$\ge$" in~\eqref{eq: Äquivalenz der
Normen}. Applying Theorem~\ref{thm: Multiplier theorem} to the symbol $\frac{1}{(1 + |\xi|^2)^\alpha}$ and using~\eqref{eq: Darstellung A^alpha} again, we get the estimate
$\| u \|_{F^{s,r}_{p,q}} \le C \| \Fourier^{-1} (1 + |\xi|^2)^\alpha \Fourier u \|_{F^{s,r}_{p,q}}$ and consequently the converse inequality in~\eqref{eq: Äquivalenz der Normen}. Hence we have proved the equivalence~\eqref{eq: Äquivalenz der Normen} and this also shows 
$\Def(\A^\alpha) \subset \big\{ u \in F^{s,r}_{p,q} ~|~ \Fourier^{-1} (1 + |\xi|^2)^\alpha \Fourier u \in F^{s,r}_{p,q} \big\}$.
\end{proof}

As a consequence of Propositions~\ref{thm: H^infty Kalkül Laplace} and~\ref{thm: alpha Potenz von A} and of~\eqref{eq: H^infty und komplexe Interpolation} we also get the following result on complex interpolation of Triebel-Lizorkin-Lorentz spaces.

\begin{korollar}
Let $-\infty < s_0 \le s_1 < \infty$ and $1 < p,q,r < \infty$. Then for $\eta \in (0,1)$ we have
\begin{equation*}
[F^{s_0,r}_{p,q},F^{s_1,r}_{p,q}]_{\eta} = F^{(1-\eta)s_0 + \eta s_1,r}_{p,q}.
\end{equation*}
\end{korollar}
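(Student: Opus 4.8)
The plan is to deduce this complex interpolation identity from the bounded $H^\infty$-calculus of $A_L$ (equivalently $\A$) together with the description of fractional power domains obtained in Propositions~\ref{thm: H^infty Kalkül Laplace} and~\ref{thm: alpha Potenz von A}. First I would observe that it suffices to treat the case $s_0 < s_1$, since for $s_0 = s_1$ the statement is trivial. Set $\sigma := s_1 - s_0 > 0$ and consider the shifted operator; the natural choice is to work with $\A_0 := \Fourier^{-1}(1+|\xi|^2)^{\sigma/2}\Fourier$ regarded as an operator on $X_0 := F^{s_0,r}_{p,q}$ with domain $\Def(\A_0) = F^{s_0+\sigma,r}_{p,q} = F^{s_1,r}_{p,q} =: X_1$, where the identification of the domain is exactly Proposition~\ref{thm: Darstellung Ableitungen}~\eqref{thm: Darstellung Ableitungen 1} (or the $\alpha = \sigma/2$, suitably rescaled, version of Proposition~\ref{thm: alpha Potenz von A}). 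By Proposition~\ref{thm: H^infty Kalkül Laplace} the operator $A_L$ has a bounded $H^\infty$-calculus on $F^{s_0,r}_{p,q}$; since $\A = 1 + A_L$ shares this property and $\A_0$ is, up to the multiplier calculus of Theorem~\ref{thm: Multiplier theorem}, a fractional power $\A^{\sigma/2}$ of $\A$, the operator $\A_0$ again has a bounded $H^\infty$-calculus on $X_0$ (a bounded $H^\infty$-calculus is inherited by fractional powers).

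Next I would invoke~\eqref{eq: H^infty und komplexe Interpolation}: for any operator $B$ with a bounded $H^\infty$-calculus on a Banach space $X$ one has $\Def(B^\eta) = [X,\Def(B)]_\eta$ for $0 < \eta < 1$. Applying this with $B = \A_0$, $X = X_0 = F^{s_0,r}_{p,q}$ and $\Def(B) = X_1 = F^{s_1,r}_{p,q}$ gives
\begin{equation*}
[F^{s_0,r}_{p,q},\,F^{s_1,r}_{p,q}]_\eta = \Def(\A_0^\eta).
\end{equation*}
It then remains to identify $\Def(\A_0^\eta)$. Here I would argue via the functional calculus exactly as in the proof of Proposition~\ref{thm: alpha Potenz von A}: using Cauchy's integral formula one computes $\A_0^\eta$ on Schwartz functions to be the Fourier multiplier with symbol $(1+|\xi|^2)^{\eta\sigma/2}$, and then Theorem~\ref{thm: Multiplier theorem} (applied to that symbol and to its reciprocal, which both satisfy the Mikhlin condition) upgrades this to all of $F^{s_0,r}_{p,q}$ and yields the equivalence of norms. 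Consequently, again by Proposition~\ref{thm: Darstellung Ableitungen}~\eqref{thm: Darstellung Ableitungen 1},
\begin{equation*}
\Def(\A_0^\eta) = \big\{ u \in F^{s_0,r}_{p,q} : \Fourier^{-1}(1+|\xi|^2)^{\eta\sigma/2}\Fourier u \in F^{s_0,r}_{p,q} \big\} = F^{s_0 + \eta\sigma,r}_{p,q} = F^{(1-\eta)s_0 + \eta s_1,r}_{p,q},
\end{equation*}
which is the claim.

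The only genuinely delicate point is the bookkeeping around fractional powers of $\A$: one must be sure that the operator defined by the symbol $(1+|\xi|^2)^{\sigma/2}$ on $F^{s_0,r}_{p,q}$ really coincides with a fractional power of $\A$ (so that its $H^\infty$-calculus is available) and that passing to its $\eta$-th power produces the symbol $(1+|\xi|^2)^{\eta\sigma/2}$. Since $\A$ itself has a bounded $H^\infty$-calculus with angle $0$, its fractional powers are defined unambiguously through that calculus and the composition law $(\A^{a})^{b} = \A^{ab}$ holds; combined with the explicit multiplier representation this is routine, but it is the step that needs care. Alternatively, one can bypass fractional powers of $\A$ entirely by verifying directly that $\A_0$ is sectorial with angle $0$ with resolvent given by the multiplier $1/(\lambda + (1+|\xi|^2)^{\sigma/2})$ (again via Theorem~\ref{thm: Multiplier theorem} and Lemma~\ref{thm: Symbolabschaetzung}) and that it admits a bounded $H^\infty$-calculus, exactly mimicking the proofs of Propositions~\ref{thm: R-Sektorialitaet Laplace-Operator} and~\ref{thm: H^infty Kalkül Laplace}; this makes the argument self-contained at the cost of a short repetition, and I would likely present it this way.
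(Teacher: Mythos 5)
Your proposal is correct, but it takes a genuinely different route from the paper. The paper works only with the single operator $\A = 1-\Delta$ for which the bounded $H^\infty$-calculus and the fractional power domains have already been established: from \eqref{eq: H^infty und komplexe Interpolation} together with Propositions~\ref{thm: H^infty Kalkül Laplace} and~\ref{thm: alpha Potenz von A} one gets $[F^{s,r}_{p,q},F^{s+2,r}_{p,q}]_\eta = F^{s+2\eta,r}_{p,q}$; writing $\A^\beta = \A^m\A^\alpha$ with $m\in\mathbb{N}_0$, $\alpha\in[0,1]$ extends this to $[F^{s,r}_{p,q},F^{s+2k,r}_{p,q}]_\eta = F^{s+2k\eta,r}_{p,q}$ for all integers $k$, and finally the reiteration theorem for the complex method fills in the non-integer gaps. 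You instead rescale the operator: you take $\A_0 = (1-\Delta)^{\sigma/2}$ with $\sigma = s_1 - s_0$ so that $\Def(\A_0) = F^{s_1,r}_{p,q}$ exactly, and a single application of $\Def(\A_0^\eta) = [X_0,\Def(\A_0)]_\eta$ then gives the result with no iteration or reiteration. What you buy is directness; what you pay is that you must first supply $\A_0$ with a bounded $H^\infty$-calculus and an explicit description of $\Def(\A_0^\eta)$, neither of which is on the shelf in the paper. Your first suggested route (inherit the calculus from $\A$ via the permanence of the $H^\infty$-calculus under taking fractional powers, and the composition law $(\A^a)^b = \A^{ab}$) is sound but imports material beyond what the paper has established; your second route (re-run the proofs of Propositions~\ref{thm: R-Sektorialitaet Laplace-Operator}, \ref{thm: H^infty Kalkül Laplace}, and~\ref{thm: alpha Potenz von A} for $\A_0$, using that $\lambda/(\lambda+(1+|\xi|^2)^{\sigma/2})$ and $f((1+|\xi|^2)^{\sigma/2})$ satisfy Mikhlin bounds by a variant of Lemma~\ref{thm: Symbolabschaetzung}~\eqref{thm: Symbolabschaetzung 1}) is self-contained but essentially duplicates those proofs. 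The paper's version is shorter precisely because it never leaves $\A$; the reiteration theorem does the bookkeeping that your rescaled operator does explicitly. Both are valid, and you correctly identify the delicate step in your own argument.
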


\begin{proof}
For $s \in \mathbb{R}$ we get
\begin{equation} \label{eq: komplexe Interpolation TLL}
[F^{s,r}_{p,q},F^{s+2k \theta,r}_{p,q}]_{\eta} = F^{s+2k \theta \eta,r}_{p,q}
\end{equation} in the case $k=1$, $\theta = 1$ from Propositions~\ref{thm: H^infty Kalkül Laplace} and~\ref{thm: alpha Potenz von A}. Since for any $\beta \ge 0$ we can write $\A^\beta = \A^m \A^\alpha$ for some $m \in \mathbb{N}_0$ and $\alpha \in [0,1]$, \eqref{eq: komplexe Interpolation TLL} holds for all $k \in \mathbb{N}_0$ and $\theta = 1$. Application of the reiteration theorem now gives~\eqref{eq: komplexe Interpolation TLL} for all $\theta \in [0,1]$ and $k \in \mathbb{N}_0$. This proves the claim.
\end{proof}


\section{The Stokes operator in $F^{s,r}_{p,q}$}
\label{sec: Helmholtz Projection and Stokes Operator}

We first introduce the Helmholtz projection on $(F^{s,r}_{p,q})^n$. Again $n \in \mathbb{N}$ is the dimension and $s \in \mathbb{R}$, $1 < p,q < \infty$, $1 \le r \le \infty$. For $u \in \Schwartz^n$ we set
\begin{equation*}
Pu := \Fourier^{-1} \bigg[ 1 - \frac{\xi \xi^T}{|\xi|^2} \bigg] \Fourier u
= u - \bigg( \sum_{j=1}^n \Fourier^{-1} \frac{\xi_i \xi_j}{|\xi|^2} \Fourier u_j \bigg)_{1 \le i \le n}.
\end{equation*}
From Theorem~\ref{thm: Multiplier theorem} we get the \emph{Helmholtz projection} as the bounded extension $P \in \mathscr{L}((F^{s,r}_{p,q})^n)$. The space of solenoidal functions is
\begin{equation*}
(F^{s,r}_{p,q})^n_\sigma := \big\{ u \in (F^{s,r}_{p,q})^n ~|~ \mathrm{div}~ u = 0 \big\}
\end{equation*}
and the space of gradient fields in $(F^{s,r}_{p,q})^n$ is
\begin{equation*}
\grad := \big\{ \nabla p ~|~ p \in \mathscr{D}'(\mathbb{R}^n), \nabla p \in (F^{s,r}_{p,q})^n \big\}.
\end{equation*}
Furthermore, let $C_c^\infty(\mathbb{R}^n)^n_\sigma$ denote the smooth functions with compact support and vanishing divergence. Now we get the Helmholtz decomposition:

\begin{satz} \label{thm: hhd}
Similar to the Definition of the space of gradient fields we set
\begin{equation*}
\gradd := \big\{ \nabla p ~|~ p \in \Temp, \nabla p \in (F^{s,r}_{p,q})^n \big\}.
\end{equation*}
Let $1 < p,q,r < \infty$ and $n \ge 2$.
If $s > -2$, we additionally admit $r \in \{ 1,\infty \}$.
Then range and nullspace of the Helmholtz projection are given by $\Bild(P) = (F^{s,r}_{p,q})^n_\sigma$ and $\Kern(P) = \grad = \widebar{\gradd}$. In particular the Helmholtz decomposition
\begin{equation*}
(F^{s,r}_{p,q})^n = (F^{s,r}_{p,q})^n_\sigma ~\oplus~ \grad
\end{equation*}
holds.
\end{satz}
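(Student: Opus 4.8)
The plan is to exploit the explicit Fourier-multiplier structure of $P$ together with the multiplier theorem (Theorem~\ref{thm: Multiplier theorem}) and the decomposition $F^{s,r}_{p,q} = \mathscr{N}(A_L) \oplus \overline{\mathscr{R}(A_L)}$-type arguments already used for the Laplacian. First I would record that $P$ is a bounded projection on $(F^{s,r}_{p,q})^n$: boundedness comes from Theorem~\ref{thm: Multiplier theorem} (the symbols $\xi_i\xi_j/|\xi|^2$ are smooth away from $0$ and satisfy the Mihlin condition), and $P^2 = P$ on $\Schwartz^n$ by a direct computation with the symbol $1 - \xi\xi^T/|\xi|^2$, hence on all of $(F^{s,r}_{p,q})^n$ by density (Lemma~\ref{thm: Einbettungen}~\eqref{thm: Einbettungen 1}). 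Consequently $(F^{s,r}_{p,q})^n = \Bild(P) \oplus \Kern(P)$ as a topological direct sum, so the only real content is identifying $\Bild(P)$ with $(F^{s,r}_{p,q})^n_\sigma$ and $\Kern(P)$ with $\grad = \widebar{\gradd}$.

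Next I would handle the two inclusions for the range. For $u \in \Schwartz^n$ one checks $\mathrm{div}\, Pu = 0$ on the Fourier side ($i\xi \cdot (1 - \xi\xi^T/|\xi|^2) = 0$), so $\Bild(P) \subset (F^{s,r}_{p,q})^n_\sigma$ follows by density and continuity of $\mathrm{div}: (F^{s,r}_{p,q})^n \to F^{s-1,r}_{p,q}$. For the converse, if $\mathrm{div}\, u = 0$ then $\xi \cdot \widehat{u} = 0$ as a tempered distribution, whence $(\xi\xi^T/|\xi|^2)\widehat{u} = 0$ away from the origin, and so $Pu = u$; the point $\{0\}$ is handled because any correction supported there would be a polynomial and, as in the proof of Proposition~\ref{thm: R-Sektorialitaet Laplace-Operator} (via $F^{s,r}_{p,q} \subset L_{p,\infty}$ for $s$ large, plus the embedding $F^{s+\tau,r}_{p,q}\subset F^{s,r}_{p,q}$ and a lifting argument using $B^\sigma$), cannot lie in $F^{s,r}_{p,q}$ unless $s$ is small — here one uses the extra hypothesis $s>-2$, or argues for general $s\in\mathbb R$ by the duality/lifting already available from Proposition~\ref{thm: Darstellung Ableitungen}. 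This gives $\Bild(P) = (F^{s,r}_{p,q})^n_\sigma$. For the kernel: $Pu = 0$ means $\widehat{u} = (\xi\xi^T/|\xi|^2)\widehat{u}$, i.e. $\widehat{u}$ is parallel to $\xi$, so setting $\widehat{p} := -i\,\xi\cdot\widehat{u}/|\xi|^2$ gives $u = \nabla p$ with $p \in \Temp$ (again modulo the $\{0\}$-subtlety), showing $\Kern(P) \subset \gradd$. Conversely each $\nabla p \in (F^{s,r}_{p,q})^n$ with $p\in\Temp$ satisfies $P\nabla p = 0$ since $(1-\xi\xi^T/|\xi|^2)\xi = 0$, so $\Kern(P) = \gradd$ as sets; since $\Kern(P)$ is closed, $\gradd$ is closed, hence $\gradd = \widebar{\gradd}$, and it remains only to see $\gradd = \grad$, i.e. that allowing $p \in \mathscr{D}'$ rather than $p \in \Temp$ does not enlarge the space — which follows because $\nabla p \in (F^{s,r}_{p,q})^n \subset \Temp^n$ forces $p$ to be tempered up to a constant.

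The main obstacle I anticipate is the treatment of the frequency origin $\xi = 0$: the symbol $\xi\xi^T/|\xi|^2$ is genuinely singular there, so identities like ``$Pu = u$ for solenoidal $u$'' or ``$Pu = 0$ implies $u = \nabla p$'' hold only modulo distributions supported at $\{0\}$, i.e. modulo polynomials, and one must rule those out inside $F^{s,r}_{p,q}$. This is exactly where the case split $s > -2$ versus general $s$ (with $1 < r < \infty$) enters, and one reduces the large-$s$ case to the small-$s$ case via the lifting operator $B^\sigma$ of Proposition~\ref{thm: Darstellung Ableitungen}, or handles it by duality using $(F^{s,r}_{p,q})' = F^{-s,r'}_{p',q'}$. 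A secondary, more routine point is verifying that $\mathrm{div}$ and $\nabla$ map Triebel-Lizorkin-Lorentz spaces boundedly between the expected scales, which is immediate from Proposition~\ref{thm: Darstellung Ableitungen}~\eqref{thm: Darstellung Ableitungen 2} and the multiplier theorem.
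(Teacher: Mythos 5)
The central gap in your proposal is the repeated use of the Fourier-multiplier formula $Pu = \Fourier^{-1}\bigl[1 - \xi\xi^T/|\xi|^2\bigr]\Fourier u$ for an arbitrary $u \in (F^{s,r}_{p,q})^n$. That formula is only established on $\Schwartz^n$, and $P$ on $(F^{s,r}_{p,q})^n$ is \emph{defined} as the continuous extension, not via the symbol. For a general tempered distribution $\widehat{u}$, multiplication by the symbol $\xi\xi^T/|\xi|^2$ --- which is singular and not smooth at the origin --- is not a priori well-defined, so equations like ``$Pu = 0$ means $\widehat{u} = (\xi\xi^T/|\xi|^2)\widehat{u}$'' or ``$\mathrm{div}\,u = 0$ implies $(\xi\xi^T/|\xi|^2)\widehat{u} = 0$ away from the origin'' cannot be written down directly. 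You flag the $\xi = 0$ subtlety as the ``main obstacle,'' but the cure you propose (ruling out polynomials inside $F^{s,r}_{p,q}$) addresses the wrong problem: the issue is not a polynomial correction term, it is that the product of a singular symbol and a tempered distribution is undefined. The paper sidesteps this entirely by first proving $\Kern(P_\eta) \subset \gradd_\eta$ on $L_\eta(\mathbb{R}^n)^n$ for $1 < \eta < 2$, where the Hausdorff--Young theorem ensures $\widehat{u}$ is a genuine $L_{\eta'}$-function and $n \ge 2$ makes $\xi^T\widehat{u}/|\xi|^2$ locally integrable near the origin; the general case then follows by density of $(F^{s,r}_{p,q})^n \cap L_\eta^n$ in $(F^{s,r}_{p,q})^n$.

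A second problem is your claim that $\Kern(P) = \gradd$ \emph{as sets}, from which you deduce that $\gradd$ is closed. The density argument above only delivers $\Kern(P) \subset \widebar{\gradd}$, and the paper is careful to state the result as $\Kern(P) = \grad = \widebar{\gradd}$, not $\Kern(P) = \gradd$. To close the chain of inclusions $\Kern(P) \subset \widebar{\gradd} \subset \grad \subset \Kern(P)$, the paper invokes de Rham's theorem (to see that $\grad$, defined with $p \in \mathscr{D}'$, is closed) and the injectivity of $A_L$ (to get $(F^{s,r}_{p,q})^n_\sigma \cap \grad = \{0\}$, which then yields $\grad \subset \Kern(P)$ and also the directness of the final sum). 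Neither ingredient appears in your outline. Your parenthetical assertion that $\nabla p$ tempered forces $p$ tempered up to a constant --- i.e.\ that $\gradd = \grad$ --- is a nontrivial statement about distributions that you neither prove nor reference, and the paper deliberately avoids committing to it: its statement is only $\grad = \widebar{\gradd}$. Finally, a small misreading: the hypothesis $s > -2$ is not an extra restriction imposed on you; it is the condition under which the \emph{additional} cases $r \in \{1,\infty\}$ are permitted (tied to the injectivity of $A_L$ established in Remark~\ref{thm: Injektivität Laplace}), while for $1 < r < \infty$ the result holds for all $s \in \mathbb{R}$.
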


\begin{proof}
We prove the claim in three steps and start with some general observations that we
will make use of. First we remark that one gets the inclusion $\Bild(P) \subset (F^{s,r}_{p,q})^n_\sigma$ by direct computation (and approximation). Second the injectivity of the Laplace operator (see Proposition~\ref{thm: R-Sektorialitaet Laplace-Operator} and Remark~\ref{thm: Injektivität Laplace}) yields
\begin{equation} \label{eq: leerer Schnitt}
(F^{s,r}_{p,q})^n_\sigma \cap \grad = \{ 0 \}.
\end{equation}
Furthermore, de Rham's theorem (see~\cite{Galdi} and the references therein)
gives that $\grad$ is a closed subspace of $(F^{s,r}_{p,q})^n$.

\emph{Step 1.}
We show $\Kern(P) \subset \gradd$ in the special case that
$F^{s,r}_{p,q}$ is a Lebesgue space. So, we fix $1 < \eta < 2$ and set $\gradd_\eta := \big\{ \nabla p ~|~ p \in \Temp, \nabla p \in L_\eta(\mathbb{R}^n)^n \big\}$. Furthermore, let $P_\eta$ denote the Helmholtz projection on $L_\eta(\mathbb{R}^n)^n$. Then the Hausdorff-Young theorem gives that $P_\eta u = \Fourier^{-1} \big[ 1 - \frac{\xi \xi^T}{|\xi|^2} \big] \Fourier u$ (which is apriori valid for Schwartz functions) is meaningful for all $u \in L_\eta(\mathbb{R}^n)^n$. Thus for $u \in \Kern(P_\eta)$ we have $\widehat{u} = \xi \frac{\xi^T}{|\xi|^2} \widehat{u}$ with
$\frac{\xi^T}{|\xi|^2} \widehat{u} \in \Temp$
(since $n \ge 2$) and get $\Kern(P_\eta) \subset \gradd_\eta$.

\emph{Step 2.} We use the first step to show the inclusions $\Kern(P) \subset \widebar{\gradd} \subset \grad \subset \Kern(P)$ (in the stated order). For a fixed $1 < \eta < 2$ we get $(1 - P)((F^{s,r}_{p,q})^n \cap L_\eta(\mathbb{R}^n)^n) \subset \gradd$ from the first step. Since $(F^{s,r}_{p,q})^n \cap L_\eta(\mathbb{R}^n)^n$ is dense in $(F^{s,r}_{p,q})^n$ we get
$\Kern(P) = (1 - P)((F^{s,r}_{p,q})^n) \subset \widebar{\gradd}$ as a conclusion. The second inclusion $\widebar{\gradd} \subset \grad$ is valid since $\grad$ is closed. For the third inclusion we fix $u \in \grad$. Since we have already shown $\Kern(P) \subset \widebar{\gradd} \subset \grad$, we obtain $Pu = u - (1 - P)u \in \grad$.
On the other hand, we have $Pu \in \Bild(P) \subset (F^{s,r}_{p,q})^n_\sigma$. Consequently,~\eqref{eq: leerer Schnitt} implies $Pu = 0$.

\emph{Step 3.}
It remains to prove $\Bild(P) = (F^{s,r}_{p,q})^n_\sigma$. In view of what we have already seen we get $(F^{s,r}_{p,q})^n = \Bild(P) \oplus \Kern(P) = \Bild(P) \oplus \grad \subset (F^{s,r}_{p,q})^n_\sigma + \grad$. Now the last inclusion is an equality since the converse inclusion is obvious. Besides, \eqref{eq: leerer Schnitt} yields the directness of the sum, so $\Bild(P) \oplus \grad = (F^{s,r}_{p,q})^n_\sigma \oplus \grad$ together with $\Bild(P) \subset (F^{s,r}_{p,q})^n_\sigma$ gives $\Bild(P) = (F^{s,r}_{p,q})^n_\sigma$.
\end{proof}

\begin{bemerkung} \label{thm: UMD divergenzfreier F^s,r_p,q}
The space $(F^{s,r}_{p,q})^n_\sigma$ is of class $\HT$ for $1 < p,q,r < \infty$ and $s \in \mathbb{R}$. This is a consequence of Proposition~\ref{thm: F^(s,r)_(p,q) UMD-Raum}: $F^{s,r}_{p,q}$ is of class $\HT$, so is $(F^{s,r}_{p,q})^n$ and hence $(F^{s,r}_{p,q})^n_\sigma$ as a closed subspace.
\end{bemerkung}

Now we are able to define the \emph{Stokes operator} as
\begin{equation*}
A_S = A^{s,r}_{S,p,q}: \Def(A_S) \subset (F^{s,r}_{p,q})^n_\sigma \longrightarrow (F^{s,r}_{p,q})^n_\sigma, \quad
u \longmapsto -P \Delta u
\end{equation*}
on the domain $\Def(A_S) := (F^{s+2,r}_{p,q})^n_\sigma$.

\begin{satz} \label{thm: Stokes Einschraenkung von Laplace}
For $s \in \mathbb{R}$ and $1 < p,q,r < \infty$ we have
$A_S = A_L|_{\Def(A_S)}$. Besides, we have $\rho(A_L) \subset \rho(A_S)$ with $(\lambda - A_S)^{-1} = (\lambda - A_L)^{-1}|_{(F^{s,r}_{p,q})^n_\sigma}$ for all $\lambda \in \rho(A_L)$.
\end{satz}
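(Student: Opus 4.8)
The plan is to show that the Stokes operator is nothing but the part of the Laplacian in the solenoidal subspace, and then read off the resolvent identity. First I would verify the inclusion $A_S \subset A_L|_{\Def(A_S)}$, which is immediate from the definitions: if $u \in \Def(A_S) = (F^{s+2,r}_{p,q})^n_\sigma$, then $\mathrm{div}\, u = 0$, so $P(-\Delta u) = -\Delta u$ because $-\Delta$ commutes with $P$ (both are Fourier multipliers, and one checks $\mathrm{div}(\Delta u) = \Delta(\mathrm{div}\, u) = 0$, so $-\Delta u \in (F^{s,r}_{p,q})^n_\sigma = \Bild(P)$ by Proposition~\ref{thm: hhd}). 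Conversely, $A_L|_{\Def(A_S)}$ has domain $(F^{s+2,r}_{p,q})^n_\sigma$ and agrees with $-\Delta = -P\Delta$ there, so the two operators coincide. The only point needing a word is that $P$ leaves $(F^{s+2,r}_{p,q})^n$ invariant and commutes with $\Delta$ on it; this follows from Theorem~\ref{thm: Multiplier theorem} applied on the $F^{s+2,r}_{p,q}$ scale together with the symbol-level identity $\big(1 - \tfrac{\xi\xi^T}{|\xi|^2}\big)|\xi|^2 = |\xi|^2\big(1 - \tfrac{\xi\xi^T}{|\xi|^2}\big)$.

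For the resolvent statement, fix $\lambda \in \rho(A_L)$. I would first check that $(\lambda - A_L)^{-1}$ maps the solenoidal subspace $(F^{s,r}_{p,q})^n_\sigma$ into itself: by Proposition~\ref{thm: R-Sektorialitaet Laplace-Operator} the resolvent is the Fourier multiplier $\Fourier^{-1}\tfrac{1}{\lambda+|\xi|^2}\Fourier$, which is a scalar multiple on each component and hence commutes with $P$; since $\Bild(P) = (F^{s,r}_{p,q})^n_\sigma$ is closed, it is invariant under $(\lambda-A_L)^{-1}$. Thus $R_\lambda := (\lambda-A_L)^{-1}|_{(F^{s,r}_{p,q})^n_\sigma}$ is a bounded operator on $(F^{s,r}_{p,q})^n_\sigma$. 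One then checks directly that $R_\lambda$ maps into $\Def(A_S) = (F^{s+2,r}_{p,q})^n_\sigma$ (it maps into $\Def(A_L) = (F^{s+2,r}_{p,q})^n$ and preserves solenoidality) and that $(\lambda - A_S)R_\lambda = \mathrm{id}$ and $R_\lambda(\lambda - A_S) = \mathrm{id}$ on $\Def(A_S)$, using $A_S = A_L|_{\Def(A_S)}$ from the first part. Hence $\lambda \in \rho(A_S)$ with $(\lambda - A_S)^{-1} = R_\lambda$, which is the claim.

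I do not expect any serious obstacle here; the statement is essentially bookkeeping once Proposition~\ref{thm: hhd} (in particular $\Bild(P) = (F^{s,r}_{p,q})^n_\sigma$) and the multiplier theorem are available. The one place to be slightly careful is ensuring that $P$ genuinely commutes with the relevant Fourier multipliers on the shifted scale $F^{s+2,r}_{p,q}$ and that the identities hold not just on $\Schwartz^n$ but after extension — this is handled by density of $\Schwartz^n$ (Lemma~\ref{thm: Einbettungen}~\eqref{thm: Einbettungen 1}) and continuity of all operators involved. Everything else is a routine composition of bounded operators.
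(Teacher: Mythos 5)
Your proof is correct and follows essentially the same route as the paper: establish $P\Delta = \Delta P$ on $(F^{s+2,r}_{p,q})^n$ as an identity of Fourier multipliers, observe that $Pu = u$ for solenoidal $u$ to get $A_S = A_L|_{\Def(A_S)}$, and then show that the resolvent $(\lambda - A_L)^{-1}$ commutes with $P$ (the paper derives $P T_\lambda = T_\lambda$ from injectivity of $\lambda - A_L$ and $P\Delta = \Delta P$, whereas you argue commutativity directly at the symbol level — a cosmetic difference). Both arguments correctly reduce the claim to bookkeeping once $\Bild(P) = (F^{s,r}_{p,q})^n_\sigma$ from Proposition~\ref{thm: hhd} and the multiplier theorem are in hand.
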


\begin{proof}
For $u \in \Def(A_S)$ we get $u \in \mathscr{N}(1 - P)$ since $P$ is a projection and thus $Pu = u$.
By Proposition~\ref{thm: Darstellung Ableitungen} and the continuity of $\Delta: \Temp \rightarrow \Temp$ we get $P \Delta = \Delta P$ on $(F^{s+2,r}_{p,q})^n$. This shows $A_S u = A_L u$ for $u \in \Def(A_S)$.

Now let $\lambda \in \rho(A_L)$ and set $T_\lambda v := (\lambda -
A_L)^{-1}v$ for $v \in (F^{s,r}_{p,q})^n_\sigma$. Again we can use $P
\Delta = \Delta P$ and get $P T_\lambda = T_\lambda$ by the injectivity
of $\lambda - A_L$, i.e $T_\lambda$ maps into
$(F^{s,r}_{p,q})^n_\sigma$. Consequently, $T_\lambda = (\lambda - A_S)^{-1}$ on  $(F^{s,r}_{p,q})^n_\sigma$.
\end{proof}

\begin{satz} \label{thm: Stokesoperator MR}
Let $s \in \mathbb{R}$ and $1 < p,q,r < \infty$. Then $A_S$ is $\RR$-sectorial with $\varphi_{A_S} = 0$. Hence $A_S \in \mathrm{MR}((F^{s,r}_{p,q})^n_\sigma)$.
Furthermore, we have 
$(A^{s,r}_{S,p,q})'=A^{-s,r'}_{S,p',q'}$.
\end{satz}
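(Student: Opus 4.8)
The plan is to transfer the $\RR$-sectoriality of the Laplace operator $A_L$ (Proposition~\ref{thm: R-Sektorialitaet Laplace-Operator}) to its restriction $A_S$ using Proposition~\ref{thm: Stokes Einschraenkung von Laplace}. First I would recall from Proposition~\ref{thm: Stokes Einschraenkung von Laplace} that $\rho(A_L) \subset \rho(A_S)$ and $(\lambda - A_S)^{-1} = (\lambda - A_L)^{-1}|_{(F^{s,r}_{p,q})^n_\sigma}$. Since in particular $\Sigma_{\pi - \varphi} \subset \rho(-A_L) \subset \rho(-A_S)$ for every $\varphi > 0$, and since the restriction of an $\RR$-bounded family to a (closed) subspace remains $\RR$-bounded (with the same $\RR$-bound), the family $\{\lambda(\lambda + A_S)^{-1} : \lambda \in \Sigma_{\pi - \varphi}\}$ is $\RR$-bounded on $(F^{s,r}_{p,q})^n_\sigma$ for every $\varphi > 0$; here one uses that $(F^{s,r}_{p,q})^n_\sigma$ is a closed subspace of $(F^{s,r}_{p,q})^n$ and that $\RR$-bounds transfer verbatim under taking the same operators on a closed invariant subspace. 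This gives pseudo-$\RR$-sectoriality of $A_S$ with $\varphi^\RR_{A_S} = 0$. To upgrade ``pseudo'' to genuine $\RR$-sectoriality one needs $\Bild(A_S)$ dense in $(F^{s,r}_{p,q})^n_\sigma$; this I would obtain as for $A_L$, namely from the injectivity of $A_S$ (which follows from $A_S u = A_L u$ on the domain together with the injectivity of $A_L$, noting again that constants/polynomials do not lie in $(F^{s,r}_{p,q})^n_\sigma \subset L_{p,\infty}^n$) combined with the topological decomposition $(F^{s,r}_{p,q})^n_\sigma = \Kern(A_S) \oplus \overline{\Bild(A_S)}$, which is available by pseudo-$\RR$-sectoriality and reflexivity of $(F^{s,r}_{p,q})^n_\sigma$ (Corollary~\ref{thm: TLL reflexive}, closed subspace of a reflexive space). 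Once $\RR$-sectoriality with spectral angle $0$ is established, the maximal regularity statement $A_S \in \mathrm{MR}((F^{s,r}_{p,q})^n_\sigma)$ follows directly from the characterization theorem (the one stated just before Section~\ref{secproptll}): $(F^{s,r}_{p,q})^n_\sigma$ is of class $\HT$ by Remark~\ref{thm: UMD divergenzfreier F^s,r_p,q}, $-A_S$ generates a bounded holomorphic $C_0$-semigroup (again inherited from $A_L$ via the resolvent identity, or from $\varphi^\RR_{A_S} = 0 < \pi/2$), and $\varphi^\RR_{A_S} = 0 < \pi/2$.

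For the duality assertion $(A^{s,r}_{S,p,q})' = A^{-s,r'}_{S,p',q'}$ I would argue analogously to the corresponding part of Proposition~\ref{thm: R-Sektorialitaet Laplace-Operator}. The Helmholtz projection $P = P^{s,r}_{p,q}$ is, up to identifying $(F^{s,r}_{p,q})' = F^{-s,r'}_{p',q'}$ (Proposition~\ref{thm: Dualraum TLL}), its own adjoint, because its Fourier symbol $1 - \xi\xi^T/|\xi|^2$ is real and symmetric; hence $P' = P^{-s,r'}_{p',q'}$ and consequently $((F^{s,r}_{p,q})^n_\sigma)' = \Bild(P)' $ can be identified with $(F^{-s,r'}_{p',q'})^n_\sigma$. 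Under this identification, integration by parts (as in the Laplace case) gives the inclusion $A^{-s,r'}_{S,p',q'} \subset (A^{s,r}_{S,p,q})'$; then the fact that $1 \in \rho(A^{s,r}_{S,p,q})$ for all admissible parameters (which holds since $1 \in \rho(A_L)$ by Proposition~\ref{thm: R-Sektorialitaet Laplace-Operator} and $\rho(A_L) \subset \rho(A_S)$) together with $1 \in \rho(A^{-s,r'}_{S,p',q'})$ forces equality, because a densely defined operator with nonempty resolvent set admits no proper extension with nonempty resolvent set.

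The main obstacle, and the point requiring the most care, is the identification of the dual of the solenoidal space $(F^{s,r}_{p,q})^n_\sigma$ as $(F^{-s,r'}_{p',q'})^n_\sigma$ together with the compatibility of the duality pairing with the one on the full space $(F^{s,r}_{p,q})^n$; one must check that the adjoint of the restricted operator $A_S$, computed within this subspace duality, really is $-P'\Delta'$ restricted appropriately, i.e. that restricting to solenoidal fields and passing to adjoints commute. This is precisely where the self-adjointness of $P$ and the $P$-invariance of $\Def(A_L) = (F^{s+2,r}_{p,q})^n$ (established in the proof of Proposition~\ref{thm: Stokes Einschraenkung von Laplace}, via $P\Delta = \Delta P$) enter, and it should be spelled out rather than left implicit. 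All remaining steps --- the transfer of $\RR$-bounds to a closed subspace, the density of the range via injectivity and reflexivity, and the appeal to the maximal-regularity characterization --- are routine given the results already proved in the excerpt.
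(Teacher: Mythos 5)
Your approach matches the paper's: you transfer the $\RR$-sectoriality of $A_L$ to $A_S$ via the restriction identity of Proposition~\ref{thm: Stokes Einschraenkung von Laplace}, obtain density of the range from injectivity and reflexivity of the closed solenoidal subspace, invoke the $\HT$-based maximal-regularity characterization, and derive the duality from integration by parts plus the common resolvent point $1$ --- exactly as the paper does (the latter implicitly, under ``the remaining proof is analogous''). One small point you gloss over: to even speak of pseudo-$\RR$-sectoriality in the paper's sense you also need $\Def(A_S) = (F^{s+2,r}_{p,q})^n_\sigma$ to be dense in $(F^{s,r}_{p,q})^n_\sigma$, which the paper obtains from the same reflexivity you already cite (via \cite[Prop.~2.1.1]{Haase}); this should be stated alongside your density-of-range argument rather than taken for granted.
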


\begin{proof}
Let $0 < \varphi < \pi$. Then we get the $\RR$-boundedness of $\{
	\lambda(\lambda + A_S)^{-1} : \lambda \in \Sigma_\varphi \}
	\subset \mathscr{L}((F^{s,r}_{p,q})^n_\sigma)$ as a direct consequence of Propositions~\ref{thm: R-Sektorialitaet Laplace-Operator} and~\ref{thm: Stokes Einschraenkung von Laplace}.  $(F^{s,r}_{p,q})^n_\sigma$ is reflexive (see Corollary~\ref{thm: TLL reflexive}). Consequently, we get the density of $\Def(A_S) \subset (F^{s,r}_{p,q})^n_\sigma$ (see e.g.~\cite[Prop. 2.1.1]{Haase}).
The Laplace operator $A_L$ is injective and so is $A_S$.
The remaining proof is thus completely analogous to the proof of
Proposition~\ref{thm: R-Sektorialitaet Laplace-Operator}.
\end{proof}

\section{The time derivative $\timeD$ and some embeddings} \label{sec: time der}

We take a look at the time derivative operator $\timeD$, more precisely at
\begin{equation*}
B: \Def(B) = H^1_p(\mathbb{R},X) \subset L_p(\mathbb{R},X) \longrightarrow L_p(\mathbb{R},X), \quad u \longmapsto (1 + \timeD) u.
\end{equation*}

\begin{satz} \label{thm: Definitionsbereich Potenz der Zeitableitung}
Let $1 < p < \infty$ and let $X$ be of class $\HT$ with $\Propa$. Then $B$ is sectorial with angle $\varphi_B = \frac{\pi}{2}$ and we have
\begin{equation}
\Def(B^\alpha) = H^\alpha_p(\mathbb{R},X)
\end{equation}
for $\alpha \in [0,1]$. The related norms are equivalent.
Furthermore, we have
\begin{equation}
B^\alpha u = \Fourier^{-1} (1 + i \xi)^\alpha \Fourier u \quad \forall u \in \Def(B^\alpha).
\end{equation}
\end{satz}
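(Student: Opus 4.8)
The plan is to mimic the structure of the proof of Proposition~\ref{thm: alpha Potenz von A}, replacing the Laplacian symbol $1+|\xi|^2$ by the time-derivative symbol $1+i\xi$ and replacing Theorem~\ref{thm: Multiplier theorem} by the operator-valued Mikhlin theorem (Theorem~\ref{thm: Satz von Mikhlin operatorwertig}) applied on $L_p(\mathbb{R},X)$ with $X$ of class $\HT$ and property $(\alpha)$, together with the symbol estimate Lemma~\ref{thm: Symbolabschaetzung}~\eqref{thm: Symbolabschaetzung 2}. First I would establish sectoriality of $B$ with angle $\tfrac{\pi}{2}$: for $\lambda\in\Sigma_\varphi$ with $\varphi>\tfrac{\pi}{2}$ the resolvent is the Fourier multiplier with symbol $(\lambda+1+i\xi)^{-1}$, and the family $m_\lambda(\xi):=\lambda(\lambda+1+i\xi)^{-1}$ has $\RR_p$-bounded (indeed uniformly bounded, hence by Kahane $\RR$-bounded since the symbols are scalar) derivatives $\xi^k\partial_\xi^k m_\lambda$ for $k=0,1$; one checks the claimed bounds directly, noting $1+i\xi$ ranges over the line $1+i\mathbb{R}\subset\Sigma_{\pi/2}$ so $\lambda+1+i\xi$ stays away from $0$. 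That $1\in\rho(B)$ (again a bounded scalar multiplier) gives bijectivity, so $B$ is sectorial, not merely pseudo-sectorial; that the angle is exactly $\tfrac{\pi}{2}$ and not smaller follows because the spectrum of $B$ is the line $1+i\mathbb{R}$.

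Next I would identify $B^\alpha$. With $g(z)=z/(1+z)^2$ and $h_\alpha(z)=z^\alpha$, Cauchy's integral formula gives, for $u\in\mathscr{S}(\mathbb{R},X)$,
\begin{equation*}
(gh_\alpha)(B)u=\Fourier^{-1}\frac{(1+i\xi)^{\alpha+1}}{(2+i\xi)^2}\Fourier u,
\end{equation*}
and Theorem~\ref{thm: Satz von Mikhlin operatorwertig} (the scalar symbol and its first $\xi$-derivative times $\xi$ are bounded on $\mathbb{R}\setminus\{0\}$, uniformly, using Lemma~\ref{thm: Symbolabschaetzung}~\eqref{thm: Symbolabschaetzung 2} and elementary estimates) shows this extends to all of $L_p(\mathbb{R},X)$. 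Since $B$ is bijective with $B^{-1}u=\Fourier^{-1}(1+i\xi)^{-1}\Fourier u$, one computes $g(B)^{-1}=(1+B)^2B^{-1}=\Fourier^{-1}(2+i\xi)^2(1+i\xi)^{-1}\Fourier$ on $\Def(B)\cap\Bild(B)$, and since $B^\alpha=g(B)^{-1}(gh_\alpha)(B)$ the two multipliers compose to give $B^\alpha u=\Fourier^{-1}(1+i\xi)^\alpha\Fourier u$.

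Finally I would prove $\Def(B^\alpha)=H^\alpha_p(\mathbb{R},X)$ with equivalent norms. For the inclusion $H^\alpha_p(\mathbb{R},X)\subset\Def(B^\alpha)$: given $u$ with $\Fourier^{-1}(1+i\xi)^\alpha\Fourier u\in L_p$, I would show $(gh_\alpha)(B)u\in\Def(B)\cap\Bild(B)$ exactly as in Proposition~\ref{thm: alpha Potenz von A}, writing $(gh_\alpha)(B)u=(1+\timeD)v=(1+\timeD)w$ for suitable $v,w$ and invoking the multiplier theorem to place $v,w$ in $\Def(B)$; this yields $u\in\Def(B^\alpha)$. The reverse inclusion and the norm equivalence then follow from the explicit formula for $B^\alpha$ together with the multiplier theorem applied to the symbols $(1+i\xi)^{-\alpha}$ (bounded, with bounded $\xi\partial_\xi$) and $(1+i\xi)^\alpha$, giving $\|u\|_{L_p}\lesssim\|\Fourier^{-1}(1+i\xi)^\alpha\Fourier u\|_{L_p}\sim\|u\|_{H^\alpha_p}$ and conversely. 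The main obstacle is purely the bookkeeping of symbol estimates for $(1+i\xi)^\alpha$-type functions and their $\xi$-derivatives along the line $\mathrm{Re}=1$: one must check that $(1+i\xi)^{\pm\alpha}$, $(1+i\xi)^{\alpha+1}/(2+i\xi)^2$, etc.\ and their first derivatives multiplied by $\xi$ are uniformly bounded, which is elementary but must be done carefully because, unlike the elliptic case $|\xi|^2$, the symbols here are not even (so both the region $|\xi|\le1$ and $|\xi|>1$ must be handled) and the homogeneity is only asymptotic.
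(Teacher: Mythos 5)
Your proposal is correct and matches the paper's intent: the paper deliberately omits the proof of this proposition, remarking only that it is ``very similar to the proof of Proposition~\ref{thm: alpha Potenz von A}'' and that most of the assertions appear already in \cite{Denk-Saal-Seiler}. Your plan carries out exactly that analogy---$1+|\xi|^2$ replaced by $1+i\xi$, Theorem~\ref{thm: Multiplier theorem} replaced by the operator-valued Mikhlin theorem (Theorem~\ref{thm: Satz von Mikhlin operatorwertig}) on $L_p(\mathbb{R},X)$, Lemma~\ref{thm: Symbolabschaetzung}~\eqref{thm: Symbolabschaetzung 1} replaced by Lemma~\ref{thm: Symbolabschaetzung}~\eqref{thm: Symbolabschaetzung 2}, and $B^\alpha=g(B)^{-1}(gh_\alpha)(B)$ computed with the same auxiliary symbols---so it is precisely the route the authors have in mind.
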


\begin{satz} \label{thm: H^infty Kalkül für Zeitableitung}
Let $X$ be a Banach space of class $\HT$ with $\Propa$ and $1 < p < \infty$. Then $B$
has an $\RR$-bounded $H^\infty$-calculus in $L_p(\mathbb{R},X)$ with $\varphi_B^{\RR,\infty} = \frac{\pi}{2}$.
\end{satz}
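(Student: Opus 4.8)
The plan is to follow exactly the same two-step template that worked for the Laplacian in Proposition~\ref{thm: H^infty Kalkül Laplace}: first establish a bounded $H^\infty$-calculus with optimal angle, then upgrade to an $\RR$-bounded $H^\infty$-calculus by invoking $\Propa$ via Theorem~\ref{thm: H^infty und Propa impliziert RH^infty}. For the second, cheap, step I note that $L_p(\mathbb{R},X)$ inherits $\Propa$ from $X$ (this is the standard fact that $\Propa$ is stable under passing to $L_p$-spaces, used already implicitly above), and that $L_p(\mathbb{R},X)$ is of class $\HT$ by Fubini/Tonelli as in the proof of Proposition~\ref{thm: F^(s,r)_(p,q) UMD-Raum}; hence once $B$ has a bounded $H^\infty$-calculus with $H^\infty$-angle $\frac{\pi}{2}$, Theorem~\ref{thm: H^infty und Propa impliziert RH^infty} gives the $\RR$-bounded version with $\varphi_B^{\RR,\infty} = \varphi_B^\infty = \frac{\pi}{2}$.

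So the real content is the bounded $H^\infty$-calculus with angle $\frac{\pi}{2}$. By Proposition~\ref{thm: Definitionsbereich Potenz der Zeitableitung}, $B$ is sectorial with $\varphi_B = \frac{\pi}{2}$ and has Fourier-symbol $1 + i\xi$. Fix $\varphi \in (\frac{\pi}{2},\pi)$ and $h \in \mathscr{H}_0(\Sigma_\varphi)$. Using the Dunford integral representation of $h(B)$ together with the fact that each resolvent $(\mu + B)^{-1}$ is the Fourier multiplier with symbol $(\mu + 1 + i\xi)^{-1}$, one gets, for $u \in \mathscr{S}(\mathbb{R},X)$,
\begin{equation*}
h(B)u = \Fourier^{-1} h(1 + i\xi)\, \Fourier u .
\end{equation*}
(Here one uses that $1 + i\xi$ ranges in a shifted half-line inside $\Sigma_\varphi$, so $h(1+i\xi)$ is well defined and decays at $\infty$; a contour-deformation argument justifies moving the Dunford contour onto the curve $\xi \mapsto 1+i\xi$.) It then remains to check that the scalar symbol $m(\xi) := h(1+i\xi)$ satisfies the Mikhlin condition of Theorem~\ref{thm: Satz von Mikhlin operatorwertig} in dimension one, i.e. that $\sup_{\xi\neq 0}|\xi^k \partial_\xi^k m(\xi)| \lesssim \|h\|_{L_\infty(\Sigma_\varphi)}$ for $k = 0,1$, uniformly. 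This is precisely Lemma~\ref{thm: Symbolabschaetzung}\eqref{thm: Symbolabschaetzung 2} applied to $h$ (after noting that replacing $i\xi$ by $1+i\xi$ only helps, since $1+i\xi$ stays in a sector strictly smaller than $\Sigma_\varphi$ and bounded away from $0$, so the bounds on $|\xi|^k|\partial^k h(1+i\xi)|$ follow from Cauchy estimates for the holomorphic bounded function $h$ exactly as in that Lemma). Theorem~\ref{thm: Satz von Mikhlin operatorwertig} (with $X = Y$, trivial parameter set) then yields $\|h(B)\|_{\mathscr{L}(L_p(\mathbb{R},X))} \le C_\varphi \|h\|_{L_\infty(\Sigma_\varphi)}$, and since $\varphi > \frac{\pi}{2}$ was arbitrary and $\varphi_B = \frac{\pi}{2}$, we conclude $\varphi_B^\infty = \frac{\pi}{2}$.

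The main obstacle is the identification $h(B)u = \Fourier^{-1} h(1+i\xi)\Fourier u$ together with the symbol estimate near $\xi = 0$: unlike the Laplacian case, the symbol $1+i\xi$ does not vanish at $\xi = 0$, which is actually an advantage (no singularity of $h(1+i\xi)$ at the origin), but one must be a little careful that the contour-shift in the Dunford integral is legitimate — i.e. that $\Sigma_\varphi$ with $\varphi > \frac{\pi}{2}$ genuinely contains the shifted line $\{1+i\xi : \xi \in \mathbb{R}\}$, which it does precisely because the angle exceeds $\frac{\pi}{2}$, and that the resulting integral converges, which is guaranteed by $h \in \mathscr{H}_0$. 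Everything else is a routine transcription of the argument for $A_L$, with Lemma~\ref{thm: Symbolabschaetzung}\eqref{thm: Symbolabschaetzung 2} playing the role that part~\eqref{thm: Symbolabschaetzung 1} played there.
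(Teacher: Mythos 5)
Your proof is correct and follows the same template that the paper itself points to, namely a transcription of the proof of Proposition~\ref{thm: H^infty Kalkül Laplace} for $A_L$: reduce to the bounded $H^\infty$-calculus via $\Propa$ and Theorem~\ref{thm: H^infty und Propa impliziert RH^infty}, identify $h(B)$ as the Fourier multiplier with symbol $h(1+i\xi)$ via Cauchy's formula, verify the Mikhlin condition via Lemma~\ref{thm: Symbolabschaetzung}\eqref{thm: Symbolabschaetzung 2}, and apply Theorem~\ref{thm: Satz von Mikhlin operatorwertig}. The one liberty you take, applying the lemma's Cauchy-estimate argument to the shifted symbol $h(1+i\xi)$ rather than $h(i\xi)$, is, as you correctly note, harmless and indeed slightly easier since $1+i\xi$ avoids the origin and lies in a strict subsector of $\Sigma_\varphi$.
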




We omit the proofs of Propositions~\ref{thm: Definitionsbereich Potenz der Zeitableitung} and~\ref{thm: H^infty Kalkül für Zeitableitung}. On the one hand this can be done very similar to the proofs of Propositions~\ref{thm: alpha Potenz von A} and~\ref{thm: H^infty Kalkül Laplace} respectively and on the other hand most of the assertions are already proved in~\cite{Denk-Saal-Seiler}.

\begin{lemma} \label{thm: Einbettung TLL}
Let $s \in \mathbb{R}$, $1 < p,q < \infty$ and $1 \le r \le \infty$ such that $p > \frac{n}{2}$.
Let $\delta > 0$ such that $\frac{n}{2p} + \delta < 1$.
Then we have the continuous embedding
\begin{equation*}
F^{s+2-\delta,r}_{p,q} \subset F^{s+1,r}_{2p,q}.
\end{equation*}
\end{lemma}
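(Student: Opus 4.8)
The statement is a Sobolev-type embedding of Triebel–Lizorkin–Lorentz spaces, lowering the differentiability index from $s+2-\delta$ to $s+1$ in exchange for passing from integrability $p$ to $2p$. The plan is to reduce this to the corresponding statement for Triebel–Lizorkin spaces $F^t_{p,q}$ and then recover the Lorentz refinement via the interpolation property in Theorem~\ref{thm: Interpolation von TLL-Räumen}. For the Triebel–Lizorkin case, the classical Jawerth–Franke-type embedding $F^{s+2-\delta}_{p,q} \subset F^{s+1}_{2p,q}$ holds precisely when the difference of smoothness indices equals $n/p - n/(2p) = n/(2p)$ minus the actual drop, i.e. when $(s+2-\delta) - n/p \ge (s+1) - n/(2p)$, which rearranges to $1 - \delta \ge n/(2p)$, exactly the hypothesis $\tfrac{n}{2p} + \delta < 1$ (with the strict inequality giving room to spare). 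This Sobolev embedding for Triebel–Lizorkin spaces with fixed microscopic parameter $q$ is standard and can be cited from~\cite{Triebel1978} or proved directly using the dyadic characterization together with the elementary estimate $\|\varphi_k * u\|_{L_{2p}} \lesssim 2^{kn/(2p)} \|\varphi_k * u\|_{L_p}$ (Nikolskii's inequality, valid since $\varphi_k * u$ has Fourier support in a ball of radius $\sim 2^k$), then summing the resulting geometric-type series in $l^{s+1}_q$ against $l^{s+2-\delta}_q$; the extra $\delta$-gap makes the series converge.

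Concretely, I would first fix a dyadic decomposition $(\varphi_k)_{k\in\mathbb{N}_0} \in \boldsymbol{\Phi}$ as in Example~\ref{thm: Bsp dyadische Zerlegung}, so that each $\varphi_k * u$ has Fourier support in an annulus of size $\sim 2^k$ (and $\varphi_0 * u$ in a ball of radius $\sim 1$). Then I would invoke the Nikolskii inequality $\|f\|_{L_{2p}(\mathbb{R}^n)} \le C\, 2^{kn/(2p)} \|f\|_{L_p(\mathbb{R}^n)}$ for $f$ with $\Supp\widehat f \subset \{|\xi|\le 2^{k+N}\}$, applied pointwise in the sequence variable. Since $\tfrac{1}{2p} < \tfrac1p$, this gives, for each fixed $x$, control of the $l^{s+1}_q$-norm of $(2^{kn/(2p)}(\varphi_k*u)(x))_k$ — wait, more carefully: one estimates the $L_{2p}(l^{s+1}_q)$-norm by moving the $L_{2p}$ norm inside, bounding each dyadic block, and observing that $2^{k(s+1)} \cdot 2^{kn/(2p)} = 2^{k(s+2-\delta)} \cdot 2^{-k(1-\delta-n/(2p))}$, where the exponent $1-\delta-n/(2p) > 0$. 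This yields $\|u\|_{F^{s+1}_{2p,q}} \le C \|u\|_{F^{s+2-\delta}_{p,q}}$ after a Minkowski/Hölder argument; some care is needed interchanging the $\ell^q$ and $L^{2p}$ norms, which is where the fixed common $q$ is essential.

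Having established the Triebel–Lizorkin embedding $F^{s+2-\delta}_{p_0,q} \subset F^{s+1}_{2p_0,q}$ for all admissible $p_0$, I would then pick $1 < p_0 < p < p_1 < \infty$ and $\theta \in (0,1)$ with $\tfrac1p = \tfrac{1-\theta}{p_0} + \tfrac{\theta}{p_1}$ (shrinking $p_1$ if necessary so that $p_1 > n/2$ still holds, as $p > n/2$), so that $\tfrac{1}{2p} = \tfrac{1-\theta}{2p_0} + \tfrac{\theta}{2p_1}$ as well. Applying Theorem~\ref{thm: Interpolation von TLL-Räumen} twice gives $F^{s+2-\delta,r}_{p,q} = (F^{s+2-\delta}_{p_0,q}, F^{s+2-\delta}_{p_1,q})_{\theta,r}$ and $F^{s+1,r}_{2p,q} = (F^{s+1}_{2p_0,q}, F^{s+1}_{2p_1,q})_{\theta,r}$, and the real interpolation functor applied to the two endpoint embeddings yields the claim. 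The main obstacle is the Triebel–Lizorkin-level Sobolev embedding itself — in particular, justifying the interchange of the $\ell^q_s$ summation and the $L^{2p}$ integral in the Nikolskii estimate, which is the genuinely nontrivial analytic point; everything downstream is a soft interpolation argument using results already in the paper.
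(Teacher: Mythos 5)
Your overall strategy is exactly the one the paper uses: reduce to the classical Sobolev-type embedding for Triebel--Lizorkin spaces (which the paper cites from~\cite[Thm.\ 2.8.1]{Triebel1978} rather than re-proving via Nikolskii), and then lift to the Lorentz setting by real interpolation via Theorem~\ref{thm: Interpolation von TLL-Räumen}, using that $\frac{1}{p}=\frac{1-\theta}{p_0}+\frac{\theta}{p_1}$ implies $\frac{1}{2p}=\frac{1-\theta}{2p_0}+\frac{\theta}{2p_1}$.

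There is, however, a slip in the parameter bookkeeping that you should fix. You write that one should shrink $p_1$ so that $p_1>\frac n2$ still holds, but since $p_1>p>\frac n2$ that condition is automatic, and shrinking $p_1$ has nothing to do with the binding constraint. What actually needs care is the \emph{lower} endpoint $p_0$: the Triebel--Lizorkin Sobolev embedding $F^{s+2-\delta}_{p_0,q}\subset F^{s+1}_{2p_0,q}$ requires $\frac{n}{2p_0}+\delta\le 1$, which is strictly stronger than $p_0>\frac n2$ as soon as $\delta>0$. The hypothesis $\frac{n}{2p}+\delta<1$ is an open condition and therefore leaves room to choose $p_0$ slightly below $p$ still satisfying $\frac{n}{2p_0}+\delta<1$, and this is precisely what the paper's $\epsilon,\epsilon'$ manipulation arranges. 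So the argument goes through once you place the constraint on $p_0$ rather than $p_1$ and make it $\frac{n}{2p_0}+\delta<1$ rather than $p_0>\frac n2$.
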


\begin{proof}
We use an embedding theorem for Triebel-Lizorkin spaces and deduce the result via interpolation.
For $\epsilon' := \frac{2}{n} (p - \frac{n}{2}) > 0$ we have
$\frac{1}{\delta} > 1 + \frac{1}{\epsilon'}$. Now select $0 < \epsilon < \epsilon'$ such that $\frac{1}{\delta} > 1 + \frac{1}{\epsilon}$. Then we have $\frac{n}{2} + \frac{n \epsilon}{2} < p$, so it's possible to select parameters
\begin{equation*}
\max \{ 1,\frac{n}{2} + \frac{n \epsilon}{2} \} < p_0 < p < p_1 < \infty.
\end{equation*}
Additionally, select $\theta \in (0,1)$ s.t. $\frac{1}{p} = \frac{1-\theta}{p_0} + \frac{\theta}{p_1}$.
From~\cite[Thm. 2.8.1]{Triebel1978} we get
$F^s_{p_j,q} \subset F^{s - \frac{n}{2 p_j}}_{2 p_j,q} \subset F^{s-1 + \delta}_{2 p_j,q}$ for $j=0,1$ and Theorem~\ref{thm: Interpolation von TLL-Räumen} gives
\begin{equation*}
F^{s+2-\delta,r}_{p,q}
= \Big( F^{s+2-\delta}_{p_0,q},F^{s+2-\delta}_{p_1,q} \Big)_{\theta,r}
\subset \Big( F^{s+1}_{2p_0,q},F^{s+1}_{2p_1,q} \Big)_{\theta,r}
= F^{s+1,r}_{2p,q}.
\qedhere
\end{equation*}
\end{proof}

\begin{lemma} \label{thm: Produkt}
Let $s > 0$, $1 < p,q < \infty$ and $1 \le r \le \infty$. Then the product \\ $\pi: F^{s,r}_{2p,q} \times F^{s,r}_{2p,q} \rightarrow F^{s,r}_{p,q}$ is continuous.
\end{lemma}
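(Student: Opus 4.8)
The statement is a product estimate $\|fg\|_{F^{s,r}_{p,q}} \lesssim \|f\|_{F^{s,r}_{2p,q}}\|g\|_{F^{s,r}_{2p,q}}$ for $s>0$. The plan is to reduce to the corresponding (known) estimate for Triebel--Lizorkin spaces and then lift it to the Lorentz scale by the interpolation identity of Theorem~\ref{thm: Interpolation von TLL-Räumen}. More precisely, for Triebel--Lizorkin spaces the paraproduct decomposition (Bony's decomposition) yields, for $s>0$, the continuous multiplication
\begin{equation*}
F^{s}_{2p,q} \times F^{s}_{2p,q} \longrightarrow F^{s}_{p,q},
\end{equation*}
a standard fact which can be cited from Triebel's monograph. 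I would treat this as the input ``black box'' (its proof rests on the Fefferman--Stein maximal inequality together with the fact that $F^s_{2p,q}\subset L_{2p}$ for $s>0$, so that low-high and high-low paraproducts map into $F^s_{p,q}$ and the high-high term maps there as well since $s>0$ gives summability).

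\textbf{Interpolation step.} Fix $s>0$, $1<p,q<\infty$, $1\le r\le\infty$. Choose $1<p_0<p<p_1<\infty$ and $\theta\in(0,1)$ with $\frac1p=\frac{1-\theta}{p_0}+\frac{\theta}{p_1}$, and correspondingly set $\frac{1}{2p_j} = \frac12\cdot\frac{1}{p_j}$, so that $\frac{1}{2p}=\frac{1-\theta}{2p_0}+\frac{\theta}{2p_1}$ with the same $\theta$. Theorem~\ref{thm: Interpolation von TLL-Räumen} gives
\begin{equation*}
F^{s,r}_{2p,q} = \bigl(F^{s}_{2p_0,q},\,F^{s}_{2p_1,q}\bigr)_{\theta,r},
\qquad
F^{s,r}_{p,q} = \bigl(F^{s}_{p_0,q},\,F^{s}_{p_1,q}\bigr)_{\theta,r}.
\end{equation*}
Since the bilinear map $\pi$ is continuous $F^{s}_{2p_j,q}\times F^{s}_{2p_j,q}\to F^{s}_{p_j,q}$ for $j=0,1$, an application of the bilinear real interpolation theorem (of Lions--Peetre type: if $B$ is bounded $X_j\times Y_j\to Z_j$, then it is bounded $(X_0,X_1)_{\theta,r}\times(Y_0,Y_1)_{\theta,r}\to(Z_0,Z_1)_{\theta,r}$, using that the $\theta$-exponents match) yields continuity of $\pi:F^{s,r}_{2p,q}\times F^{s,r}_{2p,q}\to F^{s,r}_{p,q}$, which is the claim.

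\textbf{Main obstacle.} The delicate point is the availability and precise formulation of the bilinear real interpolation theorem for the second-index parameter $r$: the classical statement requires the \emph{same} interpolation exponent on all three couples and on both input factors, and one must check that the trilinear scaling is compatible — here it is, because the map is honestly bilinear and $\frac1p$, $\frac{1}{2p}$ are governed by a single $\theta$. One must be slightly careful that the bilinear interpolation theorem in the real method does hold with a loss-free exponent (it does, by the Lions--Peetre reiteration / the $J$-$K$ method applied bilinearly), as opposed to the situation where one would need $r\ge$ some combination. An alternative, avoiding the bilinear interpolation machinery entirely, would be to run the paraproduct argument directly in $F^{s,r}_{p,q}$, replacing the Fefferman--Stein $L_p(\ell_q)$ maximal inequality by its Lorentz-space analogue $L_{p,r}(\ell_q)$ (valid for $1<p<\infty$, $1<q<\infty$ by interpolation of the vector-valued maximal inequality in $p$), together with the embedding $F^{s,r}_{2p,q}\subset L_{2p,r}$ from Lemma~\ref{thm: Einbettungen}~\eqref{thm: Einbettungen 3} and Hölder's inequality $L_{2p,r}\cdot L_{2p,r}\hookrightarrow L_{p,r/2}\hookrightarrow L_{p,r}$; I expect the interpolation route to be the cleaner write-up.
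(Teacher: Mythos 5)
Your overall plan---reduce to Johnsen's product estimate on the Triebel--Lizorkin scale and then lift to the Lorentz scale by real interpolation via Theorem~\ref{thm: Interpolation von TLL-Räumen}---is exactly the paper's strategy. The gap is in the interpolation step. The bilinear real interpolation theorem of Lions--Peetre type does \emph{not} hold with the same secondary index $r$ on both input factors and on the target: if $T$ is bilinear and bounded $A_j\times B_j\to C_j$ ($j=0,1$), one only gets $T:(A_0,A_1)_{\theta,q_0}\times(B_0,B_1)_{\theta,q_1}\to(C_0,C_1)_{\theta,q}$ under the convolution-type constraint $\frac{1}{q_0}+\frac{1}{q_1}\ge 1+\frac{1}{q}$. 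Setting $q_0=q_1=q=r$ forces $r\le 1$, so the statement you invoke fails precisely for the interesting range $1<r<\infty$ (and for $r\ge 2$ one cannot even salvage it by allowing a weaker target $F^{s,r'}_{p,q}$ with $r'>r$). Your parenthetical reassurance that the exponent is ``loss-free by the Lions--Peetre reiteration / the $J$--$K$ method applied bilinearly'' is the false step.

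The paper avoids this by interpolating \emph{one variable at a time}, so that only the unconstrained \emph{linear} real interpolation theorem is ever used. Concretely: using the off-diagonal H\"older form of Johnsen's estimate, $\pi:F^s_{a,q}\times F^s_{b,q}\to F^s_{c,q}$ with $\frac1c=\frac1a+\frac1b$ and $s>0$, one first freezes the second factor $u\in F^s_{2p_i,q}$ and interpolates the linear map $\pi(\cdot,u)$ over the first argument between $F^s_{2p_0,q}$ and $F^s_{2p_1,q}$, obtaining $\pi:F^{s,r}_{2p,q}\times F^s_{2p_i,q}\to F^{s,r}_{d_i,q}$ with $\frac{1}{d_i}=\frac{1}{2p}+\frac{1}{2p_i}$; then one freezes $v\in F^{s,r}_{2p,q}$ and interpolates $\pi(v,\cdot)$ over the second argument, using Theorem~\ref{thm: Interpolation von TLL-Räumen} once more to identify $\bigl(F^{s,r}_{d_0,q},F^{s,r}_{d_1,q}\bigr)_{\theta,r}=F^{s,r}_{p,q}$. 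Each pass is a linear interpolation, so no constraint on $r$ arises. Your sketched alternative (running Bony's decomposition directly in $L_{p,r}(l^s_q)$ with a Lorentz-space Fefferman--Stein inequality and H\"older $L_{2p,r}\cdot L_{2p,r}\hookrightarrow L_{p,r}$) would also sidestep the bilinear-interpolation pitfall, but you only gesture at it; the route you actually commit to is the one with the gap.
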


\begin{proof}
Again we make use of a corresponding fact for Triebel-Lizorkin spaces (which is included in the paper of J. Johnsen~\cite{Johnsen}) and extend this to Triebel-Lizorkin-Lorentz spaces via interpolation.
We fix parameters $1 < p_0 < p < p_1 < \infty$. The product
$\pi: F^s_{2p_j,q} \times F^s_{2p_j,q} \rightarrow F^s_{p_j,q}$
is continuous for $j=0,1$ due to~\cite[Thm. 6.1]{Johnsen} and so is
$\pi(\Punkt,u): F^s_{2p_j,q} \rightarrow F^s_{p_j,q}$ for each $u \in F^s_{2p_j,q}$.
Interpolation of the respective spaces leads to the continuity of
$\pi(\Punkt,u): F^{s,r}_{2p,q} \rightarrow F^{s,r}_{p,q}$
for any $u \in F^s_{2p_0,q} \cup F^s_{2p_1,q}$ and thus the whole product
$\pi: F^{s,r}_{2p,q} \times F^s_{p_i,q} \rightarrow F^{s,r}_{p,q}$
is continuous for $i=0,1$.

Now by repeating an analogue argument with
$\pi(v,\Punkt): F^s_{p_i,q} \rightarrow F^{s,r}_{2p,q}$
we get the continuity of
$\pi: F^{s,r}_{2p,q} \times F^{s,r}_{2p,q} \rightarrow F^{s,r}_{p,q}$, where we made use of the (simpler) fact, that we get $F^{s,r}_{p,q}$ by real interpolation with itself.
\end{proof}


Consider any function space $\mathcal{F}$ of time-dependent functions on
some time interval $(0,T)$ (or in other words on $[0,T]$ since we
usually identify two functions differing on a null set). If
$\mathcal{F}$ contains the smooth functions with compact support on
$(0,T]$ then we denote their closure in $\mathcal{F}$ by
$_0\mathcal{F}$. For the function spaces of time-dependent functions
that appear in the sequel, $_0\mathcal{F}$ consists of those functions $u \in \mathcal{F}$ with $u|_{t=0} = 0$ if the trace in time exists for $\mathcal{F}$. Note that we usually have $_0\mathcal{F} = \mathcal{F}$ if the trace in time doesn't exist (cf.~\cite[Thm. 4.3.2/1(a)]{Triebel1978}).

\begin{lemma} \label{thm: Mixed Derivatives}
Let $s \in \mathbb{R}$, $1 < p,q,r < \infty$, $1 < \eta < \infty$ and $\alpha \in [0,1]$. Then for $T \in (0,\infty]$ we have the continuous embeddings
\begin{equation} \label{eq: MDT auf R}
H^1_\eta \big( \mathbb{R},F^{s,r}_{p,q} \big) \cap L_\eta \big( \mathbb{R},F^{s+2,r}_{p,q} \big) \subset H^\alpha_\eta(\mathbb{R},F^{s+2(1-\alpha),r}_{p,q})
\end{equation}
and
\begin{equation} \label{eq: MDT auf (0,T)}
H^1_\eta \big( (0,T),F^{s,r}_{p,q} \big) \cap L_\eta \big( (0,T),F^{s+2,r}_{p,q} \big) \subset H^\alpha_\eta((0,T),F^{s+2(1-\alpha),r}_{p,q}).
\end{equation}
For $T \in (0,\infty)$ we also have the continuous embedding
\begin{equation} \label{eq: MDT mit Zeitspur 0}
_0H^1_\eta \big( (0,T),F^{s,r}_{p,q} \big) \cap L_\eta \big( (0,T),F^{s+2,r}_{p,q} \big) \subset \text{} _0H^\alpha_\eta((0,T),F^{s+2(1-\alpha),r}_{p,q})
\end{equation}
locally uniformly in time,
i.e., for every $T_0 > 0$ there exists an embedding constant $C > 0$ for~\eqref{eq: MDT mit Zeitspur 0}, which is independent of $T \in (0,T_0]$.
\end{lemma}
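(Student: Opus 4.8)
The strategy is to reduce everything to an operator-theoretic statement about the fractional powers of the time-derivative operator $B$ and the spatial operator $\A$, and then invoke an abstract "mixed derivative theorem" (the Dore–Venni / Sobolevskii type inequality) which holds whenever two commuting sectorial operators have bounded $H^\infty$-calculi with angles summing to less than $\pi$. Concretely, on the base space $L_\eta(\mathbb{R},F^{s,r}_{p,q})$ I would work with $B_\eta u := (1+\timeD)u$ on $\Def(B_\eta)=H^1_\eta(\mathbb{R},F^{s,r}_{p,q})$ and with $\A_\eta$ acting pointwise in time as $(1-\Delta)$ on $\Def(\A_\eta)=L_\eta(\mathbb{R},F^{s+2,r}_{p,q})$. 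By Proposition~\ref{thm: H^infty Kalkül für Zeitableitung}, $B_\eta$ has a bounded $H^\infty$-calculus with angle $\frac{\pi}{2}$; by Proposition~\ref{thm: H^infty Kalkül Laplace} together with Proposition~\ref{thm: alpha Potenz von A} (applied in the vector-valued setting over $L_\eta(\mathbb{R},\cdot)$, which is again of class $\HT$ with $\Propa$), $\A_\eta$ has a bounded $H^\infty$-calculus with angle $0$. Since $\frac{\pi}{2}+0<\pi$ and the two operators commute in the resolvent sense, the mixed derivative theorem (see e.g.~\cite{Pruess-Simonett}, or the classical estimate $\|B_\eta^{\alpha}\A_\eta^{1-\alpha}u\|\le C(\|B_\eta u\|+\|\A_\eta u\|)$) yields $\Def(B_\eta)\cap\Def(\A_\eta)\subset\Def(B_\eta^{\alpha}\A_\eta^{1-\alpha})$ with the corresponding norm estimate. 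Using Proposition~\ref{thm: Definitionsbereich Potenz der Zeitableitung} to identify $\Def(B_\eta^{\alpha})=H^\alpha_\eta(\mathbb{R},F^{s,r}_{p,q})$ and Proposition~\ref{thm: alpha Potenz von A} to identify $\Def(\A^{1-\alpha})=F^{s+2(1-\alpha),r}_{p,q}$, this is exactly~\eqref{eq: MDT auf R}.

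For the case of a finite interval, I would pass from $(0,T)$ to $\mathbb{R}$ by a standard extension–restriction argument: extend $u$ defined on $(0,T)$ to a function $Eu$ on $\mathbb{R}$ lying in the intersection space on $\mathbb{R}$, apply~\eqref{eq: MDT auf R}, and then restrict. A suitable extension operator for the intersection space $H^1_\eta\cap L_\eta(\cdot,\Def)$ is available (the appendix of the paper collects such operators); one just needs it to be bounded simultaneously into $H^1_\eta(\mathbb{R},F^{s,r}_{p,q})$ and $L_\eta(\mathbb{R},F^{s+2,r}_{p,q})$, which a higher-order Hestenes-type reflection provides. This gives~\eqref{eq: MDT auf (0,T)}. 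For~\eqref{eq: MDT mit Zeitspur 0} on the subspace with vanishing initial trace, I would instead use extension by zero to the left of $t=0$ followed by an extension to the right of $t=T$; extension by zero is bounded on $_0H^1_\eta$ into $H^1_\eta(-\infty,T)$ precisely because the trace at $0$ vanishes, and it commutes with the embedding~\eqref{eq: MDT auf R} restricted to $(-\infty,T)$. The local uniformity of the constant in $T\in(0,T_0]$ then follows because the zero-extension step has norm independent of $T$ and the right-extension step can be chosen with norm controlled uniformly for $T\le T_0$ (cf. the Poincaré-type argument in the proof of Lemma~\ref{thm: Abschätzung MR}); one must also check that the image of $_0H^\alpha_\eta((-\infty,T),\cdot)$ restricts back into $_0H^\alpha_\eta((0,T),\cdot)$, which holds since $\alpha\le 1$ and the extension was zero on $(-\infty,0)$.

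The main obstacle is the finite-interval step, more precisely producing the extension operator that is \emph{simultaneously} bounded on both spaces of the intersection and, for~\eqref{eq: MDT mit Zeitspur 0}, compatible with the vanishing-trace condition and with a $T$-uniform norm. The $\mathbb{R}$-case itself is essentially a black-box application of the mixed derivative theorem once the two $H^\infty$-calculi are in place — the only point requiring a remark is that passing to the vector-valued base space $L_\eta(\mathbb{R},F^{s,r}_{p,q})$ preserves class $\HT$ and $\Propa$ (Tonelli, resp. Fubini, as in the proof of Proposition~\ref{thm: F^(s,r)_(p,q) UMD-Raum}), so that Propositions~\ref{thm: alpha Potenz von A} and~\ref{thm: H^infty Kalkül Laplace} apply verbatim to $\A_\eta$. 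I would therefore spend most of the written proof on the extension operators, citing the appendix for the technical construction and verifying only the boundedness bookkeeping and the trace compatibility.
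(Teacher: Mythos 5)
Your proposal follows essentially the same route as the paper's proof. The $\mathbb{R}$-case is proved by applying the abstract mixed derivative theorem to the commuting operators $\A=1-\Delta$ (lifted trivially to the base space $L_\eta(\mathbb{R},F^{s,r}_{p,q})$ with the same $H^\infty$-angle $0$) and $B=1+\timeD$ with $H^\infty$-angle $\tfrac{\pi}{2}$, exactly as you describe; the paper cites \cite[Lem.\ 4.1]{Denk-Saal-Seiler} for this and then uses Propositions~\ref{thm: Definitionsbereich Potenz der Zeitableitung}, \ref{thm: alpha Potenz von A} and~\ref{thm: Darstellung Ableitungen} for the norm identifications you sketch. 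The finite-interval cases are then obtained by the same extension--restriction argument, with the paper invoking $E_{T,m}$ from~\eqref{eq: Fortsetzungsoperator} for~\eqref{eq: MDT auf (0,T)} and $E_{\infty,1}E_T$ from~\eqref{eq: Fortsetzungsoperator Slobodecki mit Zeitspur 0} for~\eqref{eq: MDT mit Zeitspur 0}. The only genuine (and minor) deviation is in the $_0$-case: you propose first extending by zero to the left of $t=0$ and then applying a reflection at $t=T$, whereas the paper first applies the $T$-uniform reflection-and-truncation operator $E_T$ of \cite{Pruess-Saal-Simonett} on $(0,\infty)$ and then the $T$-independent Hestenes extension $E_{\infty,1}$ to all of $\mathbb{R}$. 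Both orderings deliver the required $T$-uniform constant on $(0,T_0]$ and the compatibility with the vanishing trace, so this is a cosmetic rather than a substantive difference.
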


\begin{proof}
Let $\A = 1 - \Delta$ in $F^{s,r}_{p,q}$ be the operator from
Proposition~\ref{thm: alpha Potenz von A} and $B = 1 + \timeD$ in
$L_\eta(\mathbb{R},F^{s,r}_{p,q})$ the operator from
Proposition~\ref{thm: Definitionsbereich Potenz der Zeitableitung}. We
have already seen that $\A$ and $B$ have a bounded $H^\infty$-calculus
with $\varphi_\A^\infty + \varphi_B^\infty < \pi$. Note that $\A$ can be
interpreted as an operator in $L_\eta(\mathbb{R},F^{s,r}_{p,q})$ instead
of $F^{s,r}_{p,q}$ in a trivial way, where it still admits a bounded
$H^\infty$-calculus with the same angle $\varphi_\A^\infty = 0$.
Obviously $\A$ and $B$ are resolvent commuting operators. So all
conditions of the mixed derivative theorem (in the version of \cite[Lem.
4.1]{Denk-Saal-Seiler}) are fulfilled. This yields that
\begin{equation*}
\| \A^{1-\alpha} B^\alpha u \|_{L_\eta(\mathbb{R},F^{s,r}_{p,q})}
\le C \| \A u + Bu \|_{L_\eta(\mathbb{R},F^{s,r}_{p,q})}
\end{equation*}
holds for all $u \in \Def(\A) \cap \Def(B)$ and all $\alpha \in [0,1]$. Now we use Propositions~\ref{thm: Definitionsbereich Potenz der Zeitableitung}, \ref{thm: alpha Potenz von A} and~\ref{thm: Darstellung Ableitungen} and get for all $u \in \mathscr{S}(\mathbb{R},F^{s+2,r}_{p,q}) \subset H^1_\eta(\mathbb{R},F^{s,r}_{p,q}) \cap L_\eta(\mathbb{R},F^{s+2,r}_{p,q})$
\begin{multline*}
\| u \|_{H^\alpha_\eta(\mathbb{R},F^{s+2(1-\alpha),r}_{p,q})}
\sim \| B^\alpha u \|_{L_\eta(\mathbb{R},F^{s+2(1-\alpha),r}_{p,q})}
\sim \| B^\alpha u \|_{L_\eta(\mathbb{R},\Def(\A^{1-\alpha}))} \\
\sim \| \A^{1-\alpha} B^\alpha u \|_{L_\eta(\mathbb{R},F^{s,r}_{p,q})}
\lesssim \| \A u + Bu \|_{L_\eta(\mathbb{R},F^{s,r}_{p,q})}
\lesssim \| u \|_{H^1_\eta(\mathbb{R},F^{s,r}_{p,q}) \cap L_\eta(\mathbb{R},F^{s+2,r}_{p,q})}.
\end{multline*}
This proves~\eqref{eq: MDT auf R}.

We get~\eqref{eq: MDT auf (0,T)} as a conclusion of~\eqref{eq: MDT auf R} by suitable retraction and extension. More precise we make use of~\eqref{eq: Fortsetzungsoperator}, which yields an extension operator simultaneously on $H^1_\eta((0,T),F^{s,r}_{p,q})$ and on $L_\eta((0,T),F^{s+2,r}_{p,q})$.

In order to prove~\eqref{eq: MDT mit Zeitspur 0}, we make use of the extension operator~\eqref{eq: Fortsetzungsoperator Slobodecki mit Zeitspur 0} in the case $\beta = 1$.
For a fixed $T_0 > 0$ we get
\begin{equation*}
\begin{split}
\| u \|_{H^\alpha_\eta((0,T),F^{s+2(1-\alpha),r}_{p,q})}
& \le \| E_{\infty,1} E_T u \|_{H^\alpha_\eta(\mathbb{R},F^{s+2(1-\alpha),r}_{p,q})} \\
& \le C \| E_{\infty,1} E_T u \|_{H^1_\eta(\mathbb{R},F^{s,r}_{p,q}) \cap L_\eta(\mathbb{R},F^{s+2,r}_{p,q})} \\
& \le C' \| u \|_{H^1_\eta((0,T),F^{s,r}_{p,q}) \cap L_\eta((0,T),F^{s+2,r}_{p,q})}
\end{split}
\end{equation*}
for all $u \in$ $_0H^1_\eta \big( (0,T),F^{s,r}_{p,q} \big) \cap L_\eta \big( (0,T),F^{s+2,r}_{p,q} \big)$ with a constant $C' > 0$, independent of $T \in (0,T_0]$.
\end{proof}

We will additionally need the following embeddings for Bessel-potential spaces on a time-interval.

\begin{lemma} \label{thm: Soboleveinbettung}
Let $1 < \eta < \infty$ and let $X$ be a Banach space of class $\HT$. Then for $s > \frac{1}{2 \eta}$ and $T \in (0,\infty]$ we have the continuous embedding
\begin{equation} \label{eq: Einbettung Bessel-potential}
H^s_\eta((0,T),X) \subset L_{2 \eta}((0,T),X).
\end{equation}
For $\alpha > \frac{1}{2\eta}$ and $T_0 > 0$ the continuous embedding
\begin{equation} \label{eq: Einbettung Bessel-potential mit Zeitspur 0}
_0H^\alpha_\eta((0,T),X) \subset L_{2 \eta}((0,T),X)
\end{equation}
holds with an embedding constant $C > 0$, which is independent of $T \in (0,T_0]$.
\end{lemma}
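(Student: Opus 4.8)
The plan is to prove the Sobolev-type embedding \eqref{eq: Einbettung Bessel-potential} for the scalar-valued case first, namely $H^s_\eta((0,T),X) \subset L_{2\eta}((0,T),X)$ when $s > \frac{1}{2\eta}$, and then transfer it to the $_0$-subspaces with a $T$-uniform constant by means of the extension operators from the appendix. First I would reduce \eqref{eq: Einbettung Bessel-potential} to the case $T = \infty$: since $H^s_\eta((0,T),X)$ is by definition the space of restrictions of functions in $H^s_\eta(\mathbb{R},X)$, and restriction does not increase either the $H^s_\eta$-norm or the $L_{2\eta}$-norm, the embedding on $(0,T)$ follows from the embedding on $\mathbb{R}$ with the same (or a smaller) constant. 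So the core is $H^s_\eta(\mathbb{R},X) \subset L_{2\eta}(\mathbb{R},X)$.

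For the embedding on the full line I would invoke the vector-valued analogue of the classical Sobolev embedding $H^s_\eta(\mathbb{R}) \hookrightarrow L_{\tilde\eta}(\mathbb{R})$ with $\frac{1}{\tilde\eta} = \frac{1}{\eta} - s$ (here one wants $s = \frac{1}{\eta} - \frac{1}{2\eta} = \frac{1}{2\eta}$ as the borderline, so $s > \frac{1}{2\eta}$ suffices with room to spare). In the $X$-valued setting with $X$ of class $\HT$, one route is: write $u = \Fourier^{-1}(1+|\xi|^2)^{-s/2} \Fourier g$ with $g = B^s u \in L_\eta(\mathbb{R},X)$ where $B^s$ is the Bessel potential operator (using that $\|B^s u\|_{L_\eta(\mathbb{R},X)}$ is an equivalent norm on $H^s_\eta(\mathbb{R},X)$, valid since $X$ is $\HT$), and then observe that convolution with the Bessel kernel $G_s = \Fourier^{-1}(1+|\xi|^2)^{-s/2}$, which lies in $L_\rho(\mathbb{R})$ for $\rho$ up to $\frac{1}{1-s}$, maps $L_\eta(\mathbb{R},X) \to L_{2\eta}(\mathbb{R},X)$ by the vector-valued Young inequality, since $\frac{1}{2\eta} + 1 = \frac{1}{\eta} + \frac{1}{\rho}$ is solvable with $\rho = \frac{1}{1 - s + \frac{1}{2\eta}} < \frac{1}{1-s}$ precisely when $s > \frac{1}{2\eta}$. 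Alternatively one may cite a standard reference (e.g.\ the embedding results in~\cite{Amann97} or~\cite{Hytoenen}) for $X$-valued Bessel-potential spaces; the $\HT$-hypothesis is exactly what makes the Fourier-analytic description of $H^s_\eta(\mathbb{R},X)$ available. I would prefer the Young-inequality argument as it is self-contained.

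For \eqref{eq: Einbettung Bessel-potential mit Zeitspur 0}, the point is the $T$-uniformity of the constant. Here I would use the extension operator $E_{\infty,\beta} E_T$ from the appendix (the same one invoked in the proof of Lemma~\ref{thm: Mixed Derivatives} via~\eqref{eq: Fortsetzungsoperator Slobodecki mit Zeitspur 0}), which for functions in $_0H^\alpha_\eta((0,T),X)$ provides an extension to $_0H^\alpha_\eta(\mathbb{R},X)$ with operator norm bounded uniformly in $T \in (0,T_0]$. Then
\begin{equation*}
\| u \|_{L_{2\eta}((0,T),X)}
\le \| E u \|_{L_{2\eta}(\mathbb{R},X)}
\le C \| E u \|_{H^\alpha_\eta(\mathbb{R},X)}
\le C' \| u \|_{H^\alpha_\eta((0,T),X)},
\end{equation*}
where $C$ is the embedding constant on $\mathbb{R}$ from the first part (note $\alpha > \frac{1}{2\eta}$) and $C'$ absorbs the $T$-uniform bound on the extension operator. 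This gives \eqref{eq: Einbettung Bessel-potential mit Zeitspur 0}.

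I expect the main obstacle to be the careful bookkeeping of exponents and the appeal to the correct vector-valued Sobolev/Young inequality: one must be sure that $X$ of class $\HT$ genuinely suffices for the Fourier-multiplier description of $H^s_\eta(\mathbb{R},X)$ used to isolate $B^s u$, and that the Bessel kernel integrability threshold $\rho < \frac{1}{1-s}$ lines up exactly with the hypothesis $s > \frac{1}{2\eta}$ so that the endpoint is strictly avoided. The $T$-uniformity in the second part is not hard once the appropriate extension operator from the appendix is in hand, since that operator is constructed precisely to be uniform on $(0,T_0]$; the only care needed is to check it respects the $_0$-condition (vanishing trace at $t = 0$), which it does by construction.
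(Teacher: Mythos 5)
Your treatment of the first embedding \eqref{eq: Einbettung Bessel-potential} is correct and takes a genuinely different route from the paper: you reduce to $T=\infty$ by a restriction argument and then prove the full-line embedding $H^s_\eta(\mathbb{R},X) \subset L_{2\eta}(\mathbb{R},X)$ directly via the vector-valued Young inequality with the Bessel kernel, whereas the paper obtains the full-line case by citing \cite[Thm.\ 3.7.5]{Amann09} and transfers it to $(0,T)$ by the extension operator \eqref{eq: Fortsetzungsoperator Slobodecki}. Your route is more self-contained and equally valid, but note an algebraic slip: from the Young relation $\frac{1}{2\eta} + 1 = \frac{1}{\eta} + \frac{1}{\rho}$ one gets $\rho = \frac{2\eta}{2\eta-1}$, which does not depend on $s$; your displayed value $\rho = \frac{1}{1-s+\frac{1}{2\eta}}$ does not follow from that identity. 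The inequality $\rho < \frac{1}{1-s}$ with the correct $\rho$ is still equivalent to $s > \frac{1}{2\eta}$, so the conclusion is saved, but not the computation as written.

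The treatment of \eqref{eq: Einbettung Bessel-potential mit Zeitspur 0} has a genuine gap. The extension operator $E_{\infty,1}E_T$ of \eqref{eq: Fortsetzungsoperator Slobodecki mit Zeitspur 0} is declared in the appendix only on ${}_0W^\beta_\eta((0,T),X)$ for $\beta \in (\frac{1}{\eta},1]$, and you cannot apply it directly to ${}_0H^\alpha_\eta((0,T),X)$ for two separate reasons. First, $H^\alpha_\eta$ and $W^\alpha_\eta$ are distinct spaces for non-integer $\alpha$, so one must pass through the Slobodeckij scale; the paper does this via $H^\alpha_\eta((0,T),X) \subset W^{\alpha-\epsilon}_\eta((0,T),X)$ (with a $T$-independent constant), extends at that level, and returns to the Bessel scale on $\mathbb{R}$ using $W^{\alpha-\epsilon}_\eta(\mathbb{R},X) \subset H^{\alpha-2\epsilon}_\eta(\mathbb{R},X)$, choosing $\epsilon$ small enough. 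Second, and more substantively, the hypothesis allows $\alpha \in (\frac{1}{2\eta},\frac{1}{\eta}]$, which lies \emph{below} the threshold $\beta > \frac{1}{\eta}$ where the reflection extension $E_T$ is constructed; there one must instead observe (as the paper does, citing \cite[Thm.\ 4.3.2/1(a)]{Triebel1978}) that ${}_0H^\alpha_\eta = H^\alpha_\eta$ in this range, so that the trivial extension by zero is available with a manifestly $T$-uniform bound. Your closing concern — whether the extension respects the vanishing trace — is not the issue (it does, by construction); the real work is the case distinction on $\alpha$ described above, which your proof does not address.
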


\begin{proof}

For~\eqref{eq: Einbettung Bessel-potential mit Zeitspur 0} let first $\alpha \in (\frac{1}{\eta},1]$.
We select $\epsilon > 0$ such that $\alpha - 2 \epsilon > \frac{1}{\eta}$.
The embedding constant of $H^\alpha_\eta((0,T),X) \subset W^{\alpha - \epsilon}_\eta((0,T),X)$ doesn't depend on $T \in (0,\infty]$ and for the extension operator
\begin{equation*}
E_{\infty,1} E_T: \text{}_0W^{\alpha - \epsilon}_\eta((0,T),X)
\longrightarrow \text{}_0W^{\alpha - \epsilon}_\eta(\mathbb{R},X)
\end{equation*}
from~\eqref{eq: Fortsetzungsoperator Slobodecki mit Zeitspur 0}
there exists a continuity constant independent of $T \in (0,T_0]$.
Thus for $u \in$ $_0H^\alpha_\eta((0,T),X)$ we conclude
\begin{multline*}
\| u \|_{L_{2 \eta}((0,T),X)}
\le \| E_{\infty,1} E_T u \|_{L_{2 \eta}(\mathbb{R},X)}
\le C \| E_{\infty,1} E_T u \|_{H^{\alpha - 2 \epsilon}_\eta(\mathbb{R},X)} \\
\le C' \| E_{\infty,1} E_T u \|_{W^{\alpha - \epsilon}_\eta(\mathbb{R},X)}
\le C'' \| u \|_{W^{\alpha - \epsilon}_\eta((0,T),X)}
\le C''' \| u \|_{H^\alpha_\eta((0,T),X)},
\end{multline*}
where $C''' > 0$ is a constant independent of $T \in (0,T_0]$.
Now let $\alpha \in (\frac{1}{2 \eta},\frac{1}{\eta}]$. In this case we have $_0H^\alpha_\eta((0,T),X) = H^\alpha_\eta((0,T),X)$,
(see~\cite[Thm. 4.3.2/1(a)]{Triebel1978})
so we can make use of an extension argument as well, where we have the trivial extension available this time.
The case $\alpha > 1$ is an obvious consequence.

Relation \eqref{eq: Einbettung Bessel-potential} is a well-known Sobolev embedding.
It can be obtained by an analogous extension argument as above, where we make use of~\eqref{eq: Fortsetzungsoperator Slobodecki} instead of~\eqref{eq: Fortsetzungsoperator Slobodecki mit Zeitspur 0}.
For $\mathbb{R}$ instead of $(0,T)$ see e.g.~\cite[Thm. 3.7.5]{Amann09}.
\end{proof}

\section{The Navier-Stokes equations} \label{sec: nse}

We fix $s \in \mathbb{R}$, $1 < p,q,r < \infty$, $1 < \eta < \infty$ and $X_\sigma := (F^{s,r}_{p,q})^n_\sigma$ with dimension $n \ge 2$.
As above, $A_S$ is the Stokes operator in $X_\sigma$.
The solution space for the Stokes equation is
\begin{equation*}
\mathbb{E}_T := H^1_\eta \big( (0,T),X_\sigma \big) \cap L_\eta \big( (0,T),\Def(A_S) \big),
\end{equation*}
where $T \in (0,\infty]$.
Next, as in Section~\ref{sec: Basic}, we set
\begin{equation}
\mathbb{F}_T := L_\eta((0,T),X_\sigma)
\quad \text{and} \quad
\mathbb{I} := \{ u_0 = u(0) : u \in \mathbb{E}_T
\},
\end{equation}
equipped with the norm
$\| u_0 \|_\mathbb{I} = \inf_{u(0) = u_0} \| u \|_{\mathbb{E}_T}$,
so $\mathbb{F}_T \times \mathbb{I}$ is the data space with right-hand side functions $f \in \mathbb{F}_T$ and initial values $u_0 \in \mathbb{I}$.
Note that by (\ref{charini}), Proposition~\ref{thm: hhd}, and~\cite[Thm.~1.9.3/1]{Triebel1978} we obtain
\[
	\mathbb{I}=\left(X_\sigma,\Def(A_S)\right)_{1-1/\eta,\eta}
	=P\left(F^{s,r}_{p,q},\, F^{s+2,r}_{p,q}\right)^n_{1-1/\eta,\eta}.
\]

The solution operator for the Stokes equation,
\begin{equation}
L: \mathbb{E}_T \xrightarrow{~\cong~} \mathbb{F}_T \times \mathbb{I}, \quad
u \longmapsto
\begin{pmatrix}
(\timeD - A_S)u \\
u(0)
\end{pmatrix},
\end{equation}
is an isomorphism when $T < \infty$, due to Proposition~\ref{thm: Stokesoperator MR}.
The nonlinear term is
\begin{equation*}
G(u):= - P (u \cdot \nabla) u = - P \mathrm{div} (u u^T), \quad u \in (F^{s,r}_{p,q})^n_\sigma,
\end{equation*}
where $P \in \mathscr{L}((F^{s,r}_{p,q})^n)$ denotes the Helmholtz projection introduced in Section~\ref{sec: Helmholtz Projection and Stokes Operator}.

\begin{theorem} \label{thm: Hauptsatz}
Let $n \in \mathbb{N}$, $n \ge 2$, $s > -1$ and let $1 < p,q,r < \infty$ and $1 < \eta < \infty$ such that $\frac{n}{2p} + \frac{1}{\eta} < 1$. Then for all
$\left( \begin{smallmatrix} f \\ u_0 \end{smallmatrix} \right)
\in \mathbb{F}_\infty \times \mathbb{I}$
\begin{equation*}
\mathrm{(PNSE)}_{f,u_0}
\begin{cases}
\timeD u - \Delta u + P(u \cdot \nabla)u &= ~f \quad ~\text{in } (0,T) \times \mathbb{R}^n, \\
 \qquad \qquad \qquad \qquad u(0) &= ~u_0 \quad \text{in } \mathbb{R}^n
\end{cases}
\end{equation*}
has a unique maximal strong solution
$u: [0,T^*) \longrightarrow \mathbb{I}$ with $T^* \in (0,\infty]$ and $u \in \mathbb{E}_T$ for all $T \in (0,T^*)$.
If additionally $\frac{n}{2p} + \frac{2}{\eta} < 1$, then $u$ is either a global solution or we have $T^* < \infty$ and $\mathrm{limsup}_{t \nearrow T^*} \| u(t) \|_{\mathbb{I}} = \infty$.
\end{theorem}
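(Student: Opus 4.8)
The plan is to solve $\mathrm{(PNSE)}_{f,u_0}$ by a fixed point argument in the maximal regularity space $\mathbb{E}_T$ for small $T$, and then continue the solution to a maximal interval. First I would reduce the problem to one with vanishing initial data: using the isomorphism $L$ from Proposition~\ref{thm: Stokesoperator MR}, let $u_* := L^{-1}\bigl(\begin{smallmatrix} f \\ u_0 \end{smallmatrix}\bigr) \in \mathbb{E}_T$ be the solution of the linear Stokes problem, and seek $u = u_* + v$ with $v \in {}_0\mathbb{E}_T := {}_0H^1_\eta((0,T),X_\sigma) \cap L_\eta((0,T),\Def(A_S))$. Then $v$ must satisfy $(\timeD - A_S)v = G(u_* + v)$, $v(0) = 0$, i.e. $v = L_0^{-1} G(u_* + v)$ where $L_0 := (\timeD - A_S)|_{{}_0\mathbb{E}_T}$, and by Lemma~\ref{thm: Abschätzung MR} the norm of $L_0^{-1}$ is bounded uniformly for $T \in (0,T_0]$. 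Defining $\Phi(v) := L_0^{-1} G(u_* + v)$, one shows $\Phi$ is a contraction on a small ball in ${}_0\mathbb{E}_T$ for $T$ small enough, and Banach's fixed point theorem gives a unique local solution.

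The analytic heart is the estimate for the nonlinearity $G(u) = -P\,\mathrm{div}(uu^T)$. I would proceed as follows. Since $P \in \mathscr{L}((F^{s,r}_{p,q})^n)$ and $\mathrm{div}$ maps $F^{s+1,r}_{p,q} \to F^{s,r}_{p,q}$ by Proposition~\ref{thm: Darstellung Ableitungen}, it suffices to bound $uu^T$ in $L_\eta((0,T),(F^{s+1,r}_{p,q})^{n\times n})$. The condition $s > -1$ together with $\frac{n}{2p} + \frac1\eta < 1$ lets me pick $\delta > 0$ with $s+1 > 0$, $\frac{n}{2p} + \delta < 1$ and $s + 2 - \delta \ge s+1$ suitably; Lemma~\ref{thm: Einbettung TLL} then gives $F^{s+2-\delta,r}_{p,q} \subset F^{s+1,r}_{2p,q}$, and since $s+1 > 0$, Lemma~\ref{thm: Produkt} gives $F^{s+1,r}_{2p,q} \times F^{s+1,r}_{2p,q} \to F^{s+1,r}_{p,q}$ continuously. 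Combining with the mixed-derivative embedding Lemma~\ref{thm: Mixed Derivatives} (which controls $H^\alpha_\eta((0,T),F^{s+2(1-\alpha),r}_{p,q})$ by $\mathbb{E}_T$) and the time-embedding Lemma~\ref{thm: Soboleveinbettung} (to upgrade the time integrability so that the product of two factors lands in $L_\eta$ in time), one obtains
\[
\| G(u) - G(w) \|_{\mathbb{F}_T} \le C(T) \bigl( \| u \|_{\mathbb{E}_T} + \| w \|_{\mathbb{E}_T} \bigr) \| u - w \|_{\mathbb{E}_T},
\]
with $C(T) \to 0$ as $T \to 0$ for the $\,_0\mathbb{E}_T$-part (using that $u_*(0) = u_0$ is fixed but the ${}_0\mathbb{E}_T$-norm of the deviation controls small-time behaviour). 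This is exactly where the parameter restrictions enter, as the remark after Theorem~\ref{thm: Hauptsatz0} notes.

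Having a local solution, I would define $T^*$ as the supremum of existence times and glue solutions on overlapping intervals (uniqueness on small intervals, applied repeatedly starting from $u(t_0) \in \mathbb{I}$ as new initial data, using that ${}_0\mathbb{E}$-contraction constants are uniform on bounded time intervals) to get the maximal strong solution $u \in \mathbb{E}_T$ for all $T < T^*$. For the blow-up alternative under the stronger hypothesis $\frac{n}{2p} + \frac2\eta < 1$: suppose $T^* < \infty$ and $\limsup_{t \nearrow T^*} \| u(t) \|_{\mathbb{I}} < \infty$. The extra room in the parameters allows a cleaner nonlinear estimate in which the existence time of the fixed point problem with initial datum $u(t_0)$ depends only on a bound for $\| u(t_0) \|_{\mathbb{I}}$ (not on $t_0$ or on finer norms), so the solution could be continued past $T^*$, a contradiction; hence either $T^* = \infty$ or the $\limsup$ is infinite. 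I expect the main obstacle to be the careful bookkeeping of the nonlinear estimate — choosing $\delta$, $\alpha$ and the time-integrability exponents consistently so that all of Lemmas~\ref{thm: Einbettung TLL}, \ref{thm: Produkt}, \ref{thm: Mixed Derivatives} and~\ref{thm: Soboleveinbettung} apply simultaneously, and tracking which constants are uniform in $T$ — rather than the abstract fixed point or continuation machinery, which is standard once the estimate is in hand.
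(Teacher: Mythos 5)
Your proposal is essentially the paper's argument: the same shift to zero initial data via $u^* = L^{-1}\bigl(\begin{smallmatrix}f\\u_0\end{smallmatrix}\bigr)$, the same nonlinear estimate $\|G(u)\|_{\mathbb{F}_T} \lesssim \|u\|_{L_{2\eta}((0,T),(F^{s+1,r}_{2p,q})^n)}^2$ built from Proposition~\ref{thm: Darstellung Ableitungen}, Lemma~\ref{thm: Produkt} and the embedding chain of Lemmas~\ref{thm: Mixed Derivatives}, \ref{thm: Soboleveinbettung}, \ref{thm: Einbettung TLL}, the same $T$-uniform constants from Lemma~\ref{thm: Abschätzung MR}, and the same continuation scheme. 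Two implementation details differ, both legitimate. For local existence the paper applies the inverse function theorem to $N(\bar u)=L\bar u - \bigl(\begin{smallmatrix}G(\bar u + u^*)\\0\end{smallmatrix}\bigr)$, showing $DN(0)$ is invertible by a Neumann-series argument and then locating a suitable preimage of zero by truncating $G(u^*)$ near $t=0$; you instead propose a Banach fixed point for $\Phi(v)=L_0^{-1}G(u^*+v)$, which is if anything slightly more direct, since $\|\Phi(0)\|\lesssim\|u^*\|^2_{L_{2\eta}((0,T),(F^{s+1,r}_{2p,q})^n)}$ shrinks with $T$ and the truncation step is unnecessary; the required estimates coincide, so this is an interchangeable reformulation. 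For the blow-up alternative the paper shows $L+B$ (with $Bv=\bigl(\begin{smallmatrix}P\,\mathrm{div}(uv^T)\\0\end{smallmatrix}\bigr)$) is an isomorphism on $\mathbb{E}_{T^*}$ by iterating a small-time invertibility across $[0,T^*)$, concluding $u\in\mathbb{E}_{T^*}\subset BUC([0,T^*),\mathbb{I})$ and hence $u(T^*)\in\mathbb{I}$; you sketch instead a uniform-in-$t_0$ lower bound on the local existence time in terms of $\sup_t\|u(t)\|_\mathbb{I}$. This also works, but you should make explicit that the uniform control is exactly what Lemma~\ref{thm: Einbettung Raum der Anfangswerte} supplies: the embedding $\mathbb{I}\subset(F^{s+1,r}_{2p,q})^n$, valid only under $\frac{n}{2p}+\frac{2}{\eta}<1$, is what yields $\|u^*\|_{L_{2\eta}((0,T),(F^{s+1,r}_{2p,q})^n)}\lesssim T^{1/(2\eta)}\|u(t_0)\|_\mathbb{I}$ with a $t_0$-independent constant, and the contribution of $f$ is handled by absolute continuity of the integral since $f\in\mathbb{F}_\infty$ is a fixed function.
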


We first convince ourselves that the two systems 
$(NSE)_{f,u_0}$ and $(PNSE)_{f,u_0}$ are equivalent. This particularly
shows that
Theorem~\ref{thm: Hauptsatz} implies Theorem~\ref{thm: Hauptsatz0}.
Indeed, when $u$ is the solution of $\mathrm{(PNSE)}_{f,u_0}$
given by Theorem~\ref{thm: Hauptsatz}, we get the solution $(u,\nabla p)$ of
$\mathrm{(NSE)}_{f,u_0}$ as claimed in Theorem~\ref{thm: Hauptsatz0}
by setting $\nabla p = -(1 - P)(u \cdot \nabla)u$. 
On the other hand, if $(u,\nabla p)$ is a solution of $\mathrm{(NSE)}_{f,u_0}$, 
then $u$ solves $\mathrm{(PNSE)}_{f,u_0}$ and consequently 
$\nabla p = -(1 - P)(u \cdot \nabla)u$.

The proof of the additional statement in Theorem~\ref{thm: Hauptsatz} 
will essentially make use of the following embedding for the 
space of initial values.

\begin{lemma} \label{thm: Einbettung Raum der Anfangswerte}
Let $s \in \mathbb{R}$, $1 < p,q,r < \infty$ and $1 < \eta < \infty$ such that $\frac{n}{2p} + \frac{2}{\eta} < 1$. Then we have the continuous embedding
\begin{equation*}
\mathbb{I} \subset (F^{s+1,r}_{2p,q})^n.
\end{equation*}
\end{lemma}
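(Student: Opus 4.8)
The plan is to identify the real interpolation space $\mathbb{I}=\bigl(X_\sigma,\Def(A_S)\bigr)_{1-1/\eta,\eta}$ with an interpolation space of Triebel-Lizorkin-Lorentz spaces and then combine a suitable trace-space characterization with the Sobolev-type embedding from Lemma~\ref{thm: Einbettung TLL}. Concretely, by the remark preceding Theorem~\ref{thm: Hauptsatz} we already know $\mathbb{I}=P\bigl(F^{s,r}_{p,q},\,F^{s+2,r}_{p,q}\bigr)^n_{1-1/\eta,\eta}$, and since $P$ is a bounded projection on $(F^{s,r}_{p,q})^n$ it suffices to prove the embedding
\[
\bigl(F^{s,r}_{p,q},\,F^{s+2,r}_{p,q}\bigr)_{1-1/\eta,\eta}\subset F^{s+1,r}_{2p,q}.
\]

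First I would compute the left-hand side. Using Proposition~\ref{thm: alpha Potenz von A}, the operator $\A=1-\Delta$ on $F^{s,r}_{p,q}$ has $\Def(\A)=F^{s+2,r}_{p,q}$ with $\Def(\A^\alpha)=F^{s+2\alpha,r}_{p,q}$ for $\alpha\in[0,1]$, and $\A$ has a bounded $H^\infty$-calculus (Proposition~\ref{thm: H^infty Kalkül Laplace} applied to $A_L$, shifted), hence in particular $\bigl(X,\Def(\A)\bigr)_{\theta,\eta}$ is a well-understood real interpolation space. By the standard identification of real interpolation spaces between a space and the domain of a sectorial operator with bounded imaginary powers (or simply by reiteration from the complex interpolation identity $[F^{s,r}_{p,q},F^{s+2,r}_{p,q}]_\theta=F^{s+2\theta,r}_{p,q}$ established in the Corollary after Proposition~\ref{thm: alpha Potenz von A}, together with Theorem~\ref{thm: Interpolation von TLL-Räumen} handling the $p$-direction), one obtains
\[
\bigl(F^{s,r}_{p,q},\,F^{s+2,r}_{p,q}\bigr)_{1-1/\eta,\eta}
=\bigl(F^{s,r}_{p,q},\,F^{s+2,r}_{p,q}\bigr)_{\theta,\eta}\quad\text{with }\theta=1-\tfrac1\eta,
\]
which lies continuously inside $F^{s+2\theta-\varepsilon,r}_{p,q}$ for every small $\varepsilon>0$ (the real interpolation space sits just below the complex one of the same order, up to an arbitrarily small loss of smoothness — compare the chain of embeddings recalled in Section~\ref{sec: Basic} for Bessel-potential spaces). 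Since $\frac{n}{2p}+\frac{2}{\eta}<1$ gives $2\theta=2-\frac{2}{\eta}>1+\frac{n}{2p}$, I can pick $\varepsilon>0$ so small that $2\theta-\varepsilon=2-\delta$ with $\delta>0$ satisfying $\frac{n}{2p}+\delta<1$; note $p>\frac n2$ is forced by $\frac{n}{2p}+\frac2\eta<1$.

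Then Lemma~\ref{thm: Einbettung TLL} applies directly (with $s$ there equal to $s$ here, and the same $\delta$): $F^{s+2-\delta,r}_{p,q}\subset F^{s+1,r}_{2p,q}$. Chaining the embeddings yields $\mathbb{I}\subset P(F^{s+1,r}_{2p,q})^n\subset(F^{s+1,r}_{2p,q})^n$, which is the claim. The main obstacle is the first step: pinning down which Triebel-Lizorkin-Lorentz space (or near-space) the real interpolation couple $\bigl(F^{s,r}_{p,q},F^{s+2,r}_{p,q}\bigr)_{\theta,\eta}$ embeds into, since the second interpolation parameter $\eta$ differs from $r$ and from $q$, so one cannot expect an exact identification; the clean way around this is to accept the $\varepsilon$-loss via the soft embedding of real interpolation spaces into complex ones of slightly smaller order, which is exactly why the hypothesis is the strict inequality $\frac{n}{2p}+\frac2\eta<1$ rather than an equality. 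Everything else is bookkeeping with the interpolation functors and the boundedness of $P$.
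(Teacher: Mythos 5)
Your proof is correct, but it takes a genuinely different route from the paper's. You work statically: you identify $\mathbb{I}$ with the real interpolation space $P\bigl(F^{s,r}_{p,q},F^{s+2,r}_{p,q}\bigr)^n_{1-1/\eta,\eta}$, then use the standard fact that for an ordered couple $X_1\subset X_0$ one has $(X_0,X_1)_{\theta,\eta}\subset(X_0,X_1)_{\theta',1}\subset[X_0,X_1]_{\theta'}$ for any $\theta'<\theta$, combine this with $[F^{s,r}_{p,q},F^{s+2,r}_{p,q}]_{\theta'}=F^{s+2\theta',r}_{p,q}$ from the corollary after Proposition~\ref{thm: alpha Potenz von A}, and close with Lemma~\ref{thm: Einbettung TLL}; the strict inequality $\frac{n}{2p}+\frac{2}{\eta}<1$ absorbs the $\varepsilon$-loss and also forces $p>\frac{n}{2}$, so the hypotheses of Lemma~\ref{thm: Einbettung TLL} are satisfied. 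The paper instead argues dynamically through the trace: it picks a small $\epsilon>0$, applies the mixed-derivative embedding (Lemma~\ref{thm: Mixed Derivatives}) to get $\mathbb{E}_T\subset H^{(1+\epsilon)/\eta}_\eta\bigl((0,T),(F^{s+2(1-(1+\epsilon)/\eta),r}_{p,q})^n\bigr)$, then a Sobolev embedding in time into $C\bigl([0,T],\cdot\bigr)$, then Lemma~\ref{thm: Einbettung TLL} on the spatial fiber, and finally evaluates at $t=0$ using the definition $\|u_0\|_{\mathbb{I}}=\inf_{u(0)=u_0}\|u\|_{\mathbb{E}_T}$. Your route is more self-contained for this particular lemma and bypasses Lemmas~\ref{thm: Mixed Derivatives} and~\ref{thm: Soboleveinbettung}; the paper's route reuses machinery that is in any case needed for the fixed-point scheme in Theorem~\ref{thm: Hauptsatz}. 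One small cleanup: the parenthetical reduction ending in the tautology ``$(\cdot,\cdot)_{1-1/\eta,\eta}=(\cdot,\cdot)_{\theta,\eta}$ with $\theta=1-1/\eta$'' adds nothing and should be cut, and you can drop the final $P$ in the chain since $\mathbb{I}\subset(F^{s,r}_{p,q},F^{s+2,r}_{p,q})^n_{1-1/\eta,\eta}$ already holds directly (interpolation of closed subspaces embeds into interpolation of the ambient spaces), so $\mathbb{I}\subset(F^{s+1,r}_{2p,q})^n$ follows without invoking boundedness of $P$ on $(F^{s+1,r}_{2p,q})^n$.
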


\begin{proof}
Select $0 < \epsilon < \min \{\eta-1, \frac{\eta}{2} [1 - (\frac{n}{2p} + \frac{2}{\eta})] \}$ and $T \in (0,\infty)$. Then we have the continuous embeddings
\begin{equation*}
\mathbb{E}_T
\subset H^{\frac{1+\epsilon}{\eta}}_\eta \big( (0,T),\big( F^{s+2(1-\frac{1+\epsilon}{\eta}),r}_{p,q} \big)^n \big)
\subset C \big( [0,T],\big( F^{s+2(1-\frac{1+\epsilon}{\eta}),r}_{p,q} \big)^n \big),
\end{equation*}
where the first embedding follows from Lemma~\ref{thm: Mixed Derivatives} and the second one can be deduced from standard Sobolev embedding in the same way as in the proof of Lemma~\ref{thm: Soboleveinbettung}.
Now, setting $\delta := \frac{2(1+\epsilon)}{\eta}$, we get $\frac{n}{2p} + \delta < 1$ so Lemma~\ref{thm: Einbettung TLL} gives the continuous embedding
\begin{equation*}
\big(F^{s+2(1-\frac{1+\epsilon}{\eta}),r}_{p,q}\big)^n
\subset (F^{s+1,r}_{2p,q})^n.
\end{equation*}
This leads to $\| u_0 \|_{(F^{s+1,r}_{2p,q})^n} \le C \| u \|_{\mathbb{E}_T}$
for $u_0 \in \mathbb{I}$ and any $u \in \mathbb{E}_T$ with $u(0) = u_0$ so the assertion is proved.
\end{proof}

\begin{proof}[Proof of Theorem~\ref{thm: Hauptsatz}]
Let $\left( \begin{smallmatrix} f \\ u_0 \end{smallmatrix} \right)
\in \mathbb{F}_\infty \times \mathbb{I}$.
We start with the local existence and uniqueness, so we need to show that there is a unique solution $u \in \mathbb{E}_T$ for
\begin{equation*}
Lu = \begin{pmatrix} f + G(u) \\ u_0 \end{pmatrix}
\end{equation*}
on some time interval.
First of all we note that it's possible to restrict ourselves to those solutions with $u(0) = 0$. In fact, by setting $u^* := L^{-1} \left( \begin{smallmatrix} f \\ u_0 \end{smallmatrix} \right)$, we can always consider $\widebar{u} = u - u^* \in$ $_0\mathbb{E}_T$ for $u \in \mathbb{E}_T$, so for any $T \in (0,\infty)$ the following assertions are equivalent:
\begin{enumerate}[(a)]
\item \label{eq: eindeutige Lösung 1} $Lu = \begin{pmatrix} f + G(u) \\ u_0 \end{pmatrix}$ has a unique solution $u \in \mathbb{E}_T$.
\item \label{eq: eindeutige Lösung 2} $L \widebar{u} = \begin{pmatrix} G(\widebar{u} + u^*) \\ 0 \end{pmatrix}$ has a unique solution $\widebar{u} \in$ $_0\mathbb{E}_T$.
\end{enumerate}

Before we are able to verify~\eqref{eq: eindeutige Lösung 2}, it's necessary to have the continuous embedding
\begin{equation} \label{eq: Einbettung nichtlinearer Term}
G(\mathbb{E}_T) \subset \mathbb{F}_T
\end{equation}
for $T \in (0,\infty)$. For $u \in \mathbb{E}_T$, using Proposition~\ref{thm: Darstellung Ableitungen}, we have
\begin{multline*}
\| G(u) \|_{\mathbb{F}_T}
=   \| P(u \cdot \nabla)u \|_{L_\eta((0,T),(F^{s,r}_{p,q})^n)}
\le C \| \mathrm{div} (uu^T) \|_{L_\eta((0,T),(F^{s,r}_{p,q})^n)} \\
\le C' \| uu^T \|_{L_\eta((0,T),(F^{s+1,r}_{p,q})^{n \times n})}
\le C'' \| u \|_{L_{2 \eta}((0,T),(F^{s+1,r}_{2p,q})^n)}^2,
\end{multline*}
where we applied Hölder's inequality together with Lemma~\ref{thm: Produkt} (note that $s+1 > 0$ is assumed) to get the last inequality. Now it remains to prove $\mathbb{E}_T \subset L_{2 \eta}((0,T),(F^{s+1,r}_{2p,q})^n)$, to get~\eqref{eq: Einbettung nichtlinearer Term}.
Due to the condition $\frac{n}{2p} + \frac{1}{\eta} < 1$, we can select $\delta > \frac{1}{\eta}$ such that $\frac{n}{2p} + \delta < 1$. Then we have $F^{s+2-\delta,r}_{p,q} \subset F^{s+1,r}_{2p,q}$, according to Lemma~\ref{thm: Einbettung TLL}. By setting $\alpha := \frac{\delta}{2}$ we get the continuous embeddings
\begin{equation} \label{eq: Einbettung ohne Nullspur}
\begin{split}
\mathbb{E}_T
\subset H^\alpha_\eta \big( (0,T),(F^{s+2(1-\alpha),r}_{p,q})^n \big)
&\subset L_{2 \eta} \big( (0,T),(F^{s+2(1-\alpha),r}_{p,q})^n \big) \\
&\subset L_{2 \eta} \big( (0,T),(F^{s+1,r}_{2p,q})^n \big)
\end{split}
\end{equation}
where we used Lemma~\ref{thm: Mixed Derivatives} for the first embedding, Lemma~\ref{thm: Soboleveinbettung} for the second embedding and Lemma~\ref{thm: Einbettung TLL} for the last embedding.
This yields~\eqref{eq: Einbettung nichtlinearer Term}.

In order to obtain~\eqref{eq: eindeutige Lösung 2}, we define
\begin{equation}
N: \text{}_0\mathbb{E}_T \longrightarrow \mathbb{F}_T \times \{ 0 \}, \quad
\widebar{u} \longmapsto L \widebar{u} - \begin{pmatrix} G(\widebar{u} + u^*) \\ 0 \end{pmatrix}
\end{equation}
for $T \in (0,\infty)$.
Because of~\eqref{eq: Einbettung nichtlinearer Term} we know that $N$ is well-defined, i.e., we have indeed $N (\widebar{u}) \in \mathbb{F}_T \times \{ 0 \}$ for all $\widebar{u} \in \text{}_0\mathbb{E}_T$. Furthermore, $N$ is continuously Fréchet-differentiable, where
\begin{equation*}
DN(0)v = Lv - \begin{pmatrix} DG(u^*)v \\ 0 \end{pmatrix}
= Lv + \begin{pmatrix} P(u^* \cdot \nabla)v + P(v \cdot \nabla)u^* \\ 0 \end{pmatrix}
\quad \forall v \in \text{}_0\mathbb{E}_T
\end{equation*}
is the derivative at the zero point. Our aim is to verify that there exists a unique $\widebar{u} \in \text{}_0\mathbb{E}_T$ such that $N(\widebar{u}) = 0$ for small time intervals $(0,T)$.

As a first step to obtain this, we prove that $DN(0):$ $_0\mathbb{E}_T
\rightarrow \mathbb{F}_T \times \{ 0 \}$ is an isomorphism when $T > 0$
is small enough. Similarly to the verification of~\eqref{eq: Einbettung nichtlinearer Term} we obtain for $T > 0$ and $v \in$ $_0\mathbb{E}_T$ that
\begin{align} \label{eq: Abschaetzung Nichtlinearitaet}
\Big\| \begin{pmatrix} DG(u^*)v \\ 0 \end{pmatrix} \Big\|_{\mathbb{F}_T \times \{ 0 \}}
&= \| P \mathrm{div}(u^* v^T) + P \mathrm{div}(v (u^*)^T) \|_{L_\eta((0,T),(F^{s,r}_{p,q})^n)} \notag \\
&\le C \| \mathrm{div}(u^* v^T) + \mathrm{div}(v (u^*)^T) \|_{L_\eta((0,T),(F^{s,r}_{p,q})^n)} \notag \\
&\le C' \| u^* v^T + v (u^*)^T \|_{L_\eta((0,T),(F^{s+1,r}_{p,q})^{n \times n})} \notag \\
&\le C'' \| u^* \|_{L_{2 \eta}((0,T),(F^{s+1,r}_{2p,q})^n)}
         \| v \|_{L_{2 \eta}((0,T),(F^{s+1,r}_{2p,q})^n)},
\end{align}
in view of Proposition~\ref{thm: Darstellung Ableitungen}, Lemma~\ref{thm: Produkt} and Hölder's inequality, where the constant $C'' > 0$ is independent of $T \in (0,\infty)$.
Again let $\delta > \frac{1}{\eta}$ such that $\frac{n}{2p} + \delta < 1$ and set $\alpha := \frac{\delta}{2}$. Then we have $F^{s+2-\delta,r}_{p,q} \subset F^{s+1,r}_{2p,q}$ and, since $\alpha > \frac{1}{2 \eta}$, we obtain for any fixed $T_0 > 0$
\begin{equation} \label{eq: Einbettung mit Nullspur}
\begin{split}
_0\mathbb{E}_T
\subset \text{}_0H^\alpha_\eta \big( (0,T),(F^{s+2(1-\alpha),r}_{p,q})^n \big)
&\subset L_{2 \eta} \big( (0,T),(F^{s+2(1-\alpha),r}_{p,q})^n \big) \\
&\subset L_{2 \eta} \big( (0,T),(F^{s+1,r}_{2p,q})^n \big)
\end{split}
\end{equation}
where the embeddings are continuous with an embedding constant independent of $T \in (0,T_0]$, due to Lemmas~\ref{thm: Mixed Derivatives} and~\ref{thm: Soboleveinbettung}.
Hence we have in total
\begin{equation} \label{eq: Abschätzung Störung}
\Big\| \begin{pmatrix} DG(u^*)v \\ 0 \end{pmatrix} \Big\|_{\mathbb{F}_T \times \{ 0 \}}
\le C_1 \| u^* \|_{L_{2 \eta}((0,T),(F^{s+1,r}_{2p,q})^n)}
         \| v \|_{\mathbb{E}_T}
\end{equation}
for all $v \in$ $_0\mathbb{E}_T$ and for all $T \in (0,T_0]$.
Thanks to Lemma~\ref{thm: Abschätzung MR} there is also a constant $C_2 > 0$ such that $\| L^{-1} \|_{\mathscr{L}(\mathbb{F}_T \times \{ 0 \}, \text{}_0\mathbb{E}_T)} \le C_2$ for all $T \in (0,T_0]$.

The size of the finite time interval $(0,T_0)$ was arbitrary up to this point. Proceeding from any finite $T_0 > 0$, we will shrink the interval $(0,T_0)$ in the following to get a unique local solution. The constants $C_1$ and $C_2$, found above, can be assumed to be fixed so they don't change by shrinking $(0,T_0)$.
First, let $(0,T_0)$ be small enough, so that
\begin{equation} \label{eq: Abschätzung u*}
\| u^* \|_{L_{2 \eta}((0,T_0),(F^{s+1,r}_{2p,q})^n)} \le \frac{1}{2 C_1 C_2}
\end{equation}
holds.
Then we obtain from~\eqref{eq: Abschätzung Störung} and~\eqref{eq: Abschätzung u*}
\begin{equation*}
\Big\| \begin{pmatrix} DG(u^*) \\ 0 \end{pmatrix} \Big\|_{\mathscr{L}(_0\mathbb{E}_T,\mathbb{F}_T \times \{ 0 \})}
<   \frac{1}{\| L^{-1} \|_{\mathscr{L}(\mathbb{F}_T \times \{ 0 \}, \text{}_0\mathbb{E}_T)}}
\end{equation*}
for all $T \in (0,T_0]$. Hence by the Neumann series we get that $DN(0):$ $_0\mathbb{E}_T \rightarrow \mathbb{F}_T \times \{ 0 \}$ is an isomorphism for all $T \in (0,T_0]$.

We apply the inverse function theorem (see e.g.~\cite[Thm. VII.7.3]{Amann-Escher-2}) and get open neighborhoods $0 \in U_T \subset$ $_0\mathbb{E}_T$ and $N(0) \in V_T \subset$ $\mathbb{F}_T \times \{ 0 \}$ such that $N: U_T \rightarrow V_T$ is bijective. Now we fix $T \in (0,T_0]$ and define for $0 < T' < T$ a function $F_{T'} \in \mathbb{F}_T$ by
\begin{equation*}
F_{T'}(t) :=
\begin{cases}
0, & \text{if } t \in (0,T') \\
G(u^*)(t), & \text{if } t \in [T',T).
\end{cases}
\end{equation*}
Then we have
\begin{equation*}
\begin{split}
\Big\| \begin{pmatrix} F_{T'} \\ 0 \end{pmatrix} - \begin{pmatrix} G(u^*) \\ 0 \end{pmatrix} \Big\|_{\mathbb{F}_T \times \{ 0 \}}^\eta
= \int_0^T \| F_{T'}(t) - G(u^*)(t) \|_{X_\sigma}^\eta dt \\
= \int_0^{T'} \| G(u^*)(t) \|_{X_\sigma}^\eta dt
\xrightarrow{T' \searrow 0} 0
\end{split}
\end{equation*}
and thus
$\begin{pmatrix} F_{T'} \\ 0 \end{pmatrix}
\xrightarrow{T' \searrow 0} N(0)$ in $\mathbb{F}_T \times \{ 0 \}$. Since $V_T$ is a neighborhood of $N(0)$, this yields $\begin{pmatrix} F_{T'} \\ 0 \end{pmatrix} \in V_T$, if $T' \in (0,T)$ is small enough and consequently for $\widebar{u} := N^{-1} \begin{pmatrix} F_{T'} \\ 0 \end{pmatrix} \in U_T$ we have
$N(\widebar{u}) = \begin{pmatrix} F_{T'} \\ 0 \end{pmatrix} = \begin{pmatrix} 0 \\ 0 \end{pmatrix}$ on $(0,T')$. Hence, by restriction of $\widebar{u}$ to $(0,T')$, we get a solution $\widebar{u} \in$ $_0\mathbb{E}_{T'}$ of~\eqref{eq: eindeutige Lösung 2}. Since $N: U_{T} \rightarrow V_{T}$ is bijective, this solution is unique.

Having established the local existence and uniqueness of a solution for $\mathrm{(PNSE)}_{f,u_0}$, we now extend the solution to a maximal time interval $[0,T^*)$. First we note that uniqueness holds on any time interval: Considering two solutions $u,v \in \mathbb{E}_T$ of $\mathrm{(PNSE)}_{f,u_0}$ on $[0,T)$ for some $T \in (0,\infty]$, we know from the established local uniqueness that $u = v$ holds on some $[0,T') \subset [0,T)$. We assume that $u$ and $v$ do not coincide on $[0,T)$. Then Lemma~\ref{thm: Einbettung Phasenraum} allows to apply a continuity argument, which provides some $0 < t_1 < t_2 < T$ so that
$u(t) = v(t) \text{ for all } t \in [0,t_1] \text{ and } u(t) \ne v(t) \text{ for all } t \in (t_1,t_2)$. Now, setting $u_1 := u(t_1)$ and $f_1 := f(t_1 + \cdot)$, we can apply local uniqueness of the solution of $\mathrm{(PNSE)}_{f_1,u_1}$ and get $u(t_1 + \cdot) = v(t_1 + \cdot)$ on some $[0,T'')$, contradictory to $u(t) \ne v(t) \text{ for all } t \in (t_1,t_2)$.

In order to get a maximal time interval $[0,T^*)$ for the solution of $\mathrm{(PNSE)}_{f,u_0}$, we define
\begin{equation*}
\begin{split}
& M := \big\{ (J_T,u_T) : T \in (0,\infty), \exists \text{ solution } u_T \in \mathbb{E}_T \text{ of } \mathrm{(PNSE)}_{f,u_0} \text{ on } J_T = [0,T) \big\}, \\
& J^* := \bigcup \{ J_T : (J_T,u_T) \in M \} =: [0,T^*)
\end{split}
\end{equation*}
and $u: [0,T^*) \rightarrow \mathbb{I}$, $u(t) := u_T(t)$ for $t \in J_T$. Due to the uniqueness proved above, $u$ is well defined and consequently the desired maximal solution.

Now let additionally $\frac{n}{2p} + \frac{2}{\eta} < 1$. We assume $T^* < \infty$ and $\mathrm{limsup}_{t \nearrow T^*} \| u(t) \|_{\mathbb{I}} < \infty$ for the maximal solution $u$. Then we have $u \in BC([0,T^*),\mathbb{I})$ (i.e., bounded and continuous).
For $T \in (0,T^*]$ and $v \in \mathbb{E}_T$ we define the linear operator
\begin{equation*}
Bv :=
\begin{pmatrix}
P \mathrm{div} (u v^T) \\
0
\end{pmatrix}.
\end{equation*}
Then we have $(L + B)u = \big( \begin{smallmatrix} f \\ u_0 \end{smallmatrix} \big)$.
As in~\eqref{eq: Abschaetzung Nichtlinearitaet} we get
\begin{equation} \label{eq: Abschaetzung fuer Stoerung}
\| Bv \|_{\mathbb{F}_T \times \mathbb{I}}
\le C \| u \|_{L_{2 \eta}((0,T),(F^{s+1,r}_{2p,q})^n)} \| v \|_{L_{2 \eta}((0,T),(F^{s+1,r}_{2p,q})^n)}
\quad \forall v \in \mathbb{E}_T
\end{equation}
with a constant $C > 0$ independent of $T$.
Concerning~\eqref{eq: Einbettung mit Nullspur} and Lemma~\ref{thm: Einbettung Raum der Anfangswerte} we get
\begin{equation*}
\| Bv \|_{\mathbb{F}_T \times \mathbb{I}}
\le C' \Big( \int_0^T \| u(t) \|_{(F^{s+1,r}_{2p,q})^n)}^{2 \eta} \Big)^\frac{1}{2 \eta} \| v \|_{\mathbb{E}_T}
\le C'' T^\frac{1}{2 \eta} \| u \|_{BC([0,T^*),\mathbb{I})} \| v \|_{\mathbb{E}_T}
\end{equation*}
for all $v \in \text{}_0\mathbb{E}_T$ with a constant $C'' > 0$ independent of $T \in (0,T^*]$.
Due to~\eqref{eq: Einbettung ohne Nullspur} we can also deduce
$B \in \mathscr{L}(\mathbb{E}_T,\mathbb{F}_T \times \mathbb{I})$ from~\eqref{eq: Abschaetzung fuer Stoerung}.
Furthermore, Lemma~\ref{thm: Abschätzung MR} gives a constant $K > 0$, such that
$\| L^{-1} \|_{\mathscr{L}(\mathbb{F}_T \times \{ 0 \},\text{}_0\mathbb{E}_T)} \le K$ holds for all $T \in (0,T^*]$.
Consequently, we obtain for sufficiently small $T \in (0,T^*]$ that
\begin{equation*}
\| B \|_{\mathscr{L}(\text{}_0\mathbb{E}_T,\mathbb{F}_T \times \{ 0 \})}
< \frac{1}{\| L^{-1} \|_{\mathscr{L}(\mathbb{F}_T \times \{ 0 \},\text{}_0\mathbb{E}_T)}},
\end{equation*}
which yields that $L + B: \text{}_0\mathbb{E}_T \rightarrow \mathbb{F}_T \times \{ 0 \}$ is an isomorphism.
More precisely, we need to choose
\begin{equation} \label{eq: Wahl von T}
T \le \frac{1}{(2 C'' K \| u \|_{BC([0,T^*),\mathbb{I})})^{2 \eta}}.
\end{equation}
Now, for $T$ as in~\eqref{eq: Wahl von T}, we can select $T_1 \in (0,T^*)$ and repeat the argument on $(T_1,T + T_1)$ instead of $(0,T)$. This yields that $L + B: \text{}_0\mathbb{E}_{(T_1,T + T_1)} \rightarrow \mathbb{F}_{(T_1,T + T_1)} \times \{ 0 \}$ is an isomorphism, where $\text{}_0\mathbb{E}_{(T_1,T + T_1)}$ (resp.\ $\mathbb{F}_{(T_1,T + T_1)}$) consists of the translations of functions in $\text{}_0\mathbb{E}_T$ (resp.\ $\mathbb{F}_T$) by $T_1$. We repeat this argument $k$ times on the interval $(k T_1, T + k T_1) \cap (0,T^*)$ until we reach $T + k T_1 \ge T^*$.
Finally we have that $L + B: \text{}_0\mathbb{E}_{T^*} \rightarrow \mathbb{F}_{T^*} \times \{ 0 \}$ is an isomorphism.
Now it is not hard to deduce that
\begin{equation} \label{eq: Isomorphismus}
L + B: \mathbb{E}_{T^*} \overset{\cong}{\longrightarrow} \mathbb{F}_{T^*} \times \mathbb{I}
\end{equation}
is an isomorphism: Continuity and injectivity are obvious while one gets the surjectivity by setting
$v^* := L^{-1} \big( \begin{smallmatrix} 0 \\ v_0 \end{smallmatrix} \big)$ and
$v := (L+B)^{-1} \big( \begin{smallmatrix} g - P \mathrm{div} (v^* u^T) \\ 0 \end{smallmatrix} \big) + v^* \in \mathbb{E}_{T^*}$ for $ \big( \begin{smallmatrix} g \\ v_0 \end{smallmatrix} \big) \in \mathbb{F}_T \times \mathbb{I}$.
As a consequence of~\eqref{eq: Isomorphismus} and Lemma~\ref{thm:
Einbettung Phasenraum} we can achieve
\begin{equation*}
u = (L + B)^{-1} \begin{pmatrix} f \\ u_0 \end{pmatrix}
\in \mathbb{E}_{T^*} \subset BUC([0,T^*),\mathbb{I})
\end{equation*}
and hence $u(T^*) = \lim_{t \nearrow T^*} u(t) \in \mathbb{I}$.
Application of the local existence and uniqueness now gives a solution of
$\mathrm{(PNSE)}_{f(\cdot + T^*),u(T^*)}$ on some time interval $[0,T'')$, which yields an extension of $u$ to a solution of $\mathrm{(PNSE)}_{f,u_0}$ on $[0,T^*+T'')$, in contradiction to the maximality of $u$.
\end{proof}

\section{Appendix: Extension operators} \label{sec: appendix}

Let $1 < \eta < \infty$.
For fixed $m \in \mathbb{N}$ and $T \in (0,\infty]$ there exists a mapping $u \mapsto E_{T,m} u$ for functions $u$ (defined on $(0,T)$ with values in any vector space) such that for all $k \in \{ 0,1,\dots,m \}$ and any Banach space $X$ we have an extension operator
\begin{equation} \label{eq: Fortsetzungsoperator}
E_{T,m}: W^k_\eta((0,T),X) \longrightarrow W^k_\eta(\mathbb{R},X).
\end{equation}
A precise proof can be found in~\cite[Thm. 4.26]{Adams} for the case of scalar-valued functions, but the given proof can be directly transferred to the vector-valued case. $E_{T,m}$ is the coretraction of
\begin{equation*}
R: W^k_\eta(\mathbb{R},X)
\longrightarrow
W^k_\eta((0,T),X), \quad
u \longmapsto u|_{(0,T)},
\end{equation*}
so, by the interpolation
$W^s_\eta(\mathbb{R},X)
= \big( L_\eta(\mathbb{R},X) , W^k_\eta(\mathbb{R},X) \big)_{\frac{s}{k},\eta}$
for $0 < s < k$, we get
\begin{equation*}
W^s_\eta((0,T),X)
= \big( L_\eta((0,T),X) , W^k_\eta((0,T),X) \big)_{\frac{s}{k},\eta}
\end{equation*}
and the extension operator
\begin{equation} \label{eq: Fortsetzungsoperator Slobodecki}
E_{T,m}: W^s_\eta((0,T),X) \longrightarrow W^s_\eta(\mathbb{R},X)
\end{equation}
(see~\cite[Thm. 1.2.4]{Triebel1978}).

Now let $T \in (0,\infty)$, $1 < \eta < \infty$ and $X$ a Banach space. For a function $u$ defined on $(0,T)$ with values in any vector space we set
\begin{equation*}
E_T u(\tau) :=
\begin{cases}
u(\tau), & \text{if } 0 <  \tau < T \\
u(2T - \tau), & \text{if } T \le \tau < 2T \\
0, & \text{if } 2T \le \tau
\end{cases}
\end{equation*}
(see also~\cite{Pruess-Saal-Simonett}).
Then, due to~\cite[Prop. 6.1]{Pruess-Saal-Simonett}, this leads to an extension operator
\begin{equation}
E_T: \text{}_0W^\beta_\eta \big( (0,T),X \big) \longrightarrow \text{}_0W^\beta_\eta \big( (0,\infty),X \big)
\end{equation}
for $\beta \in (\frac{1}{p},1]$ such that for any fixed $T_0 \in (0,\infty)$ there is a constant $C = C(T_0)$ with $\| E_T \| \le C$ for all $T \in (0,T_0]$.
Now we use~\eqref{eq: Fortsetzungsoperator Slobodecki} in the case $T = \infty$ and $m = 1$ and get the extension operator
\begin{equation} \label{eq: Fortsetzungsoperator Slobodecki mit Zeitspur 0}
E_{\infty,1} E_T: \text{}_0W^\beta_\eta \big( (0,T),X \big) \longrightarrow \text{}_0W^\beta_\eta \big( \mathbb{R},X \big)
\end{equation}
(for $\beta \in (\frac{1}{p},1]$),
whose operator norms $\| E_{\infty,1} E_T \|$, $T \in (0,T_0]$ are bounded above for a fixed $T_0 > 0$ as well.
The structure of $E_T$ also gives that $\| E_T u \|_{L_\eta((0,\infty),X)} \le 2 \| u \|_{L_\eta((0,T),X)}$.

\addcontentsline{toc}{section}{Literatur}




\end{document}